\documentclass[10pt]{article}
\usepackage{latexsym, amsmath, amssymb, amsthm}
\usepackage{enumerate, mathrsfs, bbm}
\usepackage{graphicx, tikz}
\usepackage{authblk, setspace, cite}

\usepackage[top=1 in, bottom=1 in, left=1.1 in, right=1.1 in]{geometry}
\usepackage[pdftex,linkcolor=red,citecolor=blue,backref=page]{hyperref}
\numberwithin{equation}{section}

\newcommand{\keywords}[1]{\par\noindent\textbf{Keywords:} #1}
\newtheorem{thm}{Theorem}[section]
\newtheorem{lem}[thm]{Lemma}
\newtheorem{defi}[thm]{Definition}

\def\be{\begin{equation}}     \def\ee{\end{equation}}

\title{Initial and initial-boundary value problems for a cubic sixth-order Boussinesq equation}
\author{Yuhao Xie\\
	School of Mathematical Sciences,\\
	University of Electronic Science and Technology of China,\\
	Chengdu, Sichuan 610054, China \\
	\texttt{xieyuhao@std.uestc.edu.cn}	\and
	Shenghao Li\footnote{Corresponding author} \\
	School of Mathematical Sciences,\\
	University of Electronic Science and Technology of China,\\
	Chengdu, Sichuan 610054, China \\
	\texttt{lish@uestc.edu.cn}}
\date{}

\begin{document}
	\maketitle
	\begin{abstract}
		In this paper,	we consider the  sixth-order Boussinesq equation with a cubic nonlinearity,
		\[	u_{tt}-u_{xx}+k u_{xxxx}-u_{xxxxxx}+\left( u^3 \right) _{xx}=0, \ \ \mbox{with }k=\pm 1,\]
		posed  on both $\mathbb{R}^+$ and $\mathbb{R}$. The local wellposedness are improved in related Bourgain-type spaces, $X^{s,b}$, for $s>-\frac12$ with $b=\frac12$ and $b>\frac12$ accordingly (See. \cite{esfahani2025well,zhong2024local}). The key ingredient is to refine the trilinear estimate on Burgain type space by adapting ideas from Tao's multiplier method, interpolation theory and our early work on IBVPs of dispersive equations \cite{li2020low,tao2001multilinear}.
	\end{abstract}
	\noindent \textbf{Mathematics Subject Classification (2020):} 35Q53, 35Q55, 35Q35.
	\noindent \keywords{Sixth-order Boussinesq equation, Initial  value problem, Initial boundary value problem, Bourgain space.}
	\section{Introduction}\label{section 1}
	\hspace{1em}
	The original sixth order Boussinesq equation (SOBE),
	\be\label{1.3}
	u_{tt}-u_{xx}-u_{xxxx}-u_{xxxxxx}+\left( u^2 \right) _{xx}=0,
	\ee
	was first derived from the Euler equation under shallow waves assumption with a higer order correct by Christov, Maugin, and Velarde \cite{christov1996well} to remedy the ill-posedness of the well-known Boussinesq equation \cite{boussinesq1872theorie}:
	\be\label{1.4}
	u_{tt}-u_{xx}-u_{xxxx}+\left( u^2 \right) _{xx}=0.
	\ee
	The SOBE had been applied in ocean engineering, nonlinear optics, and nonlinear lattice dynamics in elastic crystals. However, in the derivation of the SOBE, certain nonlinear terms were neglected for simplicity of the model. Based on this motivation, Zhong and Li \cite{zhong2024local} derived the fully generalized SOBE(gSOBE),
	\be\label{1.5}
	u_{tt}-u_{xx}-u_{xxxx}-u_{xxxxxx}-\left( u^2 \right) _{xx}-\left( u^2 \right) _{xxxx}-\left( uu_{xx} \right) _{xx}-\left( u^3 \right) _{xx}=0,
	\ee
	that retrieved all missing nonlinear terms from  $abcd$ systems introduced by Bona, Chen, Saut \cite{bona2002boussinesq,bona2004boussinesq}.\par
	The studies for initial value problem (IVP) of SOBE:
	\be\label{1.6}
	\left\{ \begin{array}{l}
		u_{tt}-u_{xx}+ku_{xxxx}-u_{xxxxxx}=\left( f\left( u \right) \right) _{xx},\quad x, t\in \mathbb{R},\\
		u\left( x,0 \right) =\varphi \left( x \right) \in H^s\left( \mathbb{R} \right) ,\quad u_t\left( x,0 \right) =\psi ^{\prime\prime}\left( x \right) \in H^{s-1}\left( \mathbb{R} \right) ,\\
	\end{array} \right. 
	\ee
	are well-developed in the past decades due to the comprehensive theories on IVPs of dispersive equations including the KdV equation and the ``good'' Boussinesq equation. When the nonlinear term is of quadratic nonlinearity, $f\left( u \right) =u^2$, Esfahani and Farah \cite{esfahani2012local} showed that \eqref{1.6} is locally well-posed for $s>-\frac{1}{2}$ by using the suitable Bourgain-type space, $X^{s,b}$ with $b>\frac{1}{2}$. Later, Esfahani and Wang \cite{esfahani2014bilinear} improved their result to $s>-\frac{3}{4}$ applying the $\left[ k;Z \right]$-multiplier norm method introduced by Tao \cite{tao2001multilinear}. In addition, its related initial boundary value problem (IBVP) was also studied by Li, Chen, Yang and Zhang \cite{li2018non,li2020low,li2022lower} with related spaces accordingly. When \eqref{1.6} is concerned with the power-law-type nonlinearity, $f\left( u \right) =\left| u \right|^pu$ ($p>0$), Esfahani, Farah, and Wang \cite{esfahani2012global} proved that \eqref{1.6} is locally well-posed for,
	$$\left\{ \begin{array}{l}
	s\ge 0,\quad 0<p <4,\\
	s\ge 1,\quad p \ge 4,\\
	\end{array} \right.$$ which is mainly relying on the estimates of oscillatory integrals introduced by Kenig \cite{kenig1991oscillatory}. In addition, the high-dimensional case of \eqref{1.6}:
	\be\label{1.7}
	\left\{ \begin{array}{l}
		u_{tt}=\varDelta \left( u-k\varDelta u+\varDelta ^2u+f\left( u \right) \right) ,\ x\in \mathbb{R}^n,\ t\in\mathbb{R}\\
		u\left( x,0 \right) =\varphi \left( x \right) ,\ u_t\left( x,0 \right) =\psi \left( x \right) ,\\
	\end{array} \right.
	\ee was studied by Esfahani and Tesfahun \cite{esfahani2025well} with cubic nonlinearity, $f\left( u \right) =\pm u^3$. They established the well-posedness of \eqref{1.7} in $H^s\left( \mathbb{R}^n \right) \times H^{s-3}\left( \mathbb{R}^n \right)$ for,
	$$\left\{ \begin{array}{l}
	s>-\frac{1}{6},\quad \ \ n=1,\\
	s>\frac{2n-3}{6},\quad 2\le n\le 4,\\
	s>\frac{2n-5}{4},\quad n\ge 5,\\
	\end{array} \right.$$ 
	based on trilinear estimates in $X^{s,b}$ with appropriate localized Strichartz estimate. On the other hand, Zhong and Li \cite{zhong2024local} studied the IVP of gSOBE:
	\be\label{1.8}
	\left\{ \begin{array}{l}
		u_{tt}-u_{xx}+ku_{xxxx}-u_{xxxxxx}=\left( u^2 \right) _{xx}+\left( u^2 \right) _{xxxx}+\left( uu_{xx} \right) _{xx}+\left( u^3 \right) _{xx},\quad \ x,t\in \mathbb{R},\\
		u\left( x,0 \right) =\varphi \left( x \right) \in H^s\left( \mathbb{R} \right) ,\quad u_t\left( x,0 \right) =\psi ^{\prime\prime}\left( x \right) \in H^{s-1}\left( \mathbb{R} \right) ,\\
	\end{array} \right.
	\ee
	and showed that \eqref{1.8} is locally well-posed for $s>\frac{1}{2}$ under $X^{s,b}$ framework.\par 
	In this paper, we continue to work on the gSOBE for its IBVP as well as IVP and intend to improve the result in \cite{zhong2024local,esfahani2025well}. The theories for dispersive equations with quadratic nonlinearity is well-developed(see \cite{esfahani2012local,esfahani2014bilinear,li2020low,li2022lower,cavalcante2017initial,holmer2006initial,kenig1993cauchy,kenig1996bilinear,kishimoto2013sharp,li2023lower,li2022non,yang2024effect}), therefore, we will only focus on equation with cubic term, $\left(u^3\right)_{xx}$. 
	We study the IBVP of the cubic SOBE in the quarter plane, 
	\be\label{1.1}
	\left\{ \begin{array}{l}
		u_{tt}-u_{xx}+k u_{xxxx}-u_{xxxxxx}=\left( u^3 \right) _{xx},\quad x>0,\quad t>0,\\
		u\left( x,0 \right) =\varphi \left( x \right) ,\ u_t\left( x,0 \right) =\psi ^{\prime\prime}\left( x \right) ,\\
		u\left( 0,t \right) =h_1\left( t \right) ,\ u_{xx}\left( 0,t \right) =h_2\left( t \right) ,\ u_{xxxx}\left( 0,t \right) =h_3\left( t \right) ,\\
	\end{array} \right.
	\ee 
	as well as its initial-value problem(IVP),
	\be\label{1.2}
	\left\{ \begin{array}{l}
		u_{tt}-u_{xx}+k u_{xxxx}-u_{xxxxxx}=\left(  u^3 \right)_{xx},\quad x, t\in \mathbb{R},\\
		u\left( x,0 \right) =\varphi \left( x \right) ,\ u_t\left( x,0 \right) =\psi ^{\prime\prime}\left( x \right) ,\\
	\end{array} \right.
	\ee
	where $k=\pm1$. 
	Before presenting the main theorem,
	we first introduce some corresponding spaces for the cubic SOBE.
	\begin{defi}
		(\textbf{Spaces for IVP}) For $s,b\in \mathbb{R}$, $X^{s,b}$ denotes the completion of the Schwartz class $S\left( \mathbb{R}^2 \right)$ with,
		$$\lVert u \rVert _{X^{s,b}}:=\lVert \left< \xi \right> ^s\left< \left| \tau \right|-\phi \left( \xi \right) \right> ^b\hat{u}\left( \xi ,\tau \right) \rVert _{L_{\xi ,\tau}^{2}\left( \mathbb{R}^2 \right)},$$
		where $\phi \left( \xi \right) =\sqrt{\xi ^6+k\xi ^4+\xi ^2}$, $\left< x \right> =1+\left| x \right|$ and $\hat{u}$ denotes the Fourier transform on both time and space of $u$. We denote $X_{T}^{s,b}:=X^{s,b}|_{\mathbb{R}\times \left( 0,T \right)}$ with the quotient norm:
		$$\lVert u \rVert _{X_{T}^{s,b}}:=\underset{w\in X^{s,b}}{inf}\left\{ \lVert w \rVert _{X^{s,b}}:w\left( x,t \right) =u\left( x,t \right) \ on\ \mathbb{R}\times \left( 0,T \right) \right\}.$$
	\end{defi}
	In order to embed $C_tH_{x}^{s}$ into $X^{s,b}$ space, the IVP requires $b>\frac{1}{2}$(see \cite{esfahani2012local,esfahani2014bilinear,zhong2024local}), but for IBVP, it will force $b\le\frac{1}{2}$ due to certain restrictions(see \cite{colliander2002generalized,cavalcante2017initial,erdougan2016regularity,holmer2006initial,li2020low,li2022lower,li2023lower}). Similar to the studies of the KdV equation \cite{colliander2002generalized,holmer2006initial} and the Coupled-KdV system \cite{li2022non}, we introduce the modified Fourier restriction space $X^{s,b,\sigma}$ and an intermediate space $Z^{s,\sigma-1}$.
	\begin{defi}
		(\textbf{Spaces for IBVP}) For $s\in \mathbb{R}$, $\sigma>\frac{1}{2}$, $\varLambda _{\sigma}$, $X^{s,\frac{1}{2},\sigma}$ and $Z^{s,\sigma-1}$ denote the completion of the Schwartz class $S\left( \mathbb{R}^2 \right)$ with,
		$$\lVert u \rVert _{\varLambda _{\sigma}\left( \mathbb{R}^2 \right)}:=\lVert \chi _{\left| \xi \right|\le 1}\left< \tau \right> ^{\sigma}\hat{u}\left( \xi ,\tau \right) \rVert _{L_{\xi ,\tau}^{2}\left( \mathbb{R}^2 \right)},\quad\lVert u \rVert _{X^{s,\frac{1}{2},\sigma}\left( \mathbb{R}^2 \right)}:=\lVert u \rVert _{X^{s,\frac{1}{2}}\left( \mathbb{R}^2 \right)}+\lVert u \rVert _{\varLambda _{\sigma}\left( \mathbb{R}^2 \right)},$$
		$$\lVert u \rVert _{Z^{s,\sigma -1}}:=\lVert \left< \tau \right> ^{\frac{s}{3}+\frac{1}{2}-\sigma}\left< \left| \tau \right|-\phi \left( \xi \right) \right> ^{\sigma -1}\hat{u}\left( \xi ,\tau \right) \rVert _{L_{\xi ,\tau}^{2}\left( \mathbb{R}^2 \right)}.$$In addition, we define:
		$$X_{T}^{s,\frac{1}{2},\sigma}:=X^{s,\frac{1}{2},\sigma}\left( \mathbb{R}^+\times \left( 0,T \right) \right) ,\quad \mathcal{X}_{T}^{s,\frac{1}{2},\sigma}:=C\left( \left[ 0,T \right] ;H^s\left( \mathbb{R}^+ \right) \right) \cap X_{T}^{s,\frac{1}{2},\sigma}$$
		with the norm:
		$$\lVert u \rVert _{\mathcal{X}_{T}^{s,\frac{1}{2},\sigma}}:=\left( \underset{t\in \left( 0,T \right)}{sup}\lVert u\left( \cdot ,t \right) \rVert _{H^s\left( \mathbb{R}^+ \right)}^{2} +\lVert u \rVert _{X_{T}^{s,\frac{1}{2},\sigma}}^2 \right) ^{\frac{1}{2}}.$$
	\end{defi}
	Since theories on IVP of dispersive equations are relatively mature and studies on related IBVPs are more challenging. We will then focus on the IBVP of the cubic SOBE first and results are as follows.
	\begin{thm}\label{thm1.3}
		For any $s>-\frac{1}{2}$, there exists a $\sigma_0=\sigma_0(s)>\frac{1}{2}$ such that for any $\sigma\in \left(\frac{1}{2},\sigma_0 \right]$,
		\begin{equation}\label{1.9}
		\Big\lVert \left( \frac{\xi ^2\widehat{uvw}}{\phi \left( \xi \right)} \right) ^{\lor} \Big\rVert _{X^{s,\sigma-1}}\lesssim \lVert u \rVert _{X^{s,\frac{1}{2},\sigma}}\lVert v \rVert _{X^{s,\frac{1}{2},\sigma}}\lVert w \rVert _{X^{s,\frac{1}{2},\sigma}},
		\end{equation}
		\be\label{1.10}
		\Big\lVert \left( \frac{\xi ^2\widehat{uvw}}{\phi \left( \xi \right)} \right) ^{\lor} \Big\rVert _{Z^{s,\sigma -1}}\lesssim \lVert u \rVert _{X^{s,\frac{1}{2},\sigma}}\lVert v \rVert _{X^{s,\frac{1}{2},\sigma}}\lVert w \rVert _{X^{s,\frac{1}{2},\sigma}}.
		\ee
	\end{thm}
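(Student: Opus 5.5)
We argue by duality, reducing \eqref{1.9} and \eqref{1.10} to weighted $L^2$ estimates on the hyperplane $\xi=\xi_1+\xi_2+\xi_3$, $\tau=\tau_1+\tau_2+\tau_3$. Write $\sigma_j=|\tau_j|-\phi(\xi_j)$ and $\sigma=|\tau|-\phi(\xi)$. Since $\phi(\xi)\sim \langle\xi\rangle\,|\xi|\,\langle \xi\rangle$ near zero and $\phi(\xi)\sim|\xi|^3$ for large $|\xi|$, the multiplier $\xi^2/\phi(\xi)$ is bounded by $\min(|\xi|,|\xi|^{-1})\lesssim \langle\xi\rangle^{-1}$. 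So \eqref{1.9} follows from
\[
\int_{*}\frac{\langle\xi\rangle^{s-1}\prod_{j=1}^3 f_j(\xi_j,\tau_j)\, g(\xi,\tau)}{\langle\sigma\rangle^{1-\sigma}\prod_{j=1}^3\langle\xi_j\rangle^{s}\langle\sigma_j\rangle^{1/2}}\lesssim \|g\|_{L^2}\prod\|f_j\|_{L^2},
\]
where each $f_j$ is allowed to carry either the weight $\langle\xi_j\rangle^s\langle\sigma_j\rangle^{1/2}$ or, on $|\xi_j|\le1$, the weight $\langle\tau_j\rangle^{\sigma}$ coming from $\Lambda_\sigma$. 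In Tao's language this is a $[4;\mathbb{R}^2]$-multiplier bound. The sign of $\tau_j$ is handled by splitting into the branches $\tau_j=\pm\phi(\xi_j)+\sigma_j$, which gives finitely many resonance functions
\[
H=\pm\phi(\xi)\mp\phi(\xi_1)\mp\phi(\xi_2)\mp\phi(\xi_3).
\]
For large frequencies this is a perturbation of the KdV-type cubic phase, and $\max(\langle\sigma\rangle,\langle\sigma_j\rangle)\gtrsim|H|$.

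The first step is a dyadic decomposition $|\xi_j|\sim N_j$, $|\xi|\sim N$, $\langle\sigma_j\rangle\sim L_j$. For the low-frequency pieces ($N_j\lesssim1$ or $N\lesssim1$) we use the $\Lambda_\sigma$ component together with the fact that $\langle\xi\rangle^{s-1}$ and $\langle\xi_j\rangle^{-s}$ are harmless there. The basic tools are the linear Strichartz and smoothing estimates for the group $e^{\pm it\phi(D)}$: the $L^4_{x,t}$ and $L^6_{x,t}$ Strichartz bounds with $b>1/2$, and the Kato smoothing gain $|\xi|$ from $\phi'(\xi)\sim\xi^2$. These are interpolated with the trivial $L^2$ bound to reach $b=\tfrac12^{-}$, i.e.\ $X^{0,\frac12-}$ into $L^{6-}$ with small loss. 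We also need the bilinear refinement: when $|\xi_1|\gg|\xi_2|$, the product $u_1u_2$ gains $N_1^{-1}$ in $L^2$ because $|\phi'(\xi_1)-\phi'(\xi_2)|\gtrsim N_1^2$.

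Case analysis, with $N_{\max}\ge N_{\rm med}\ge N_{\min}$ among $N_1,N_2,N_3$:
\begin{itemize}
\item[(i)] \emph{All frequencies comparable and $\lesssim N$.} Here $s-1-3s=-1-2s<0$ when $s>-\tfrac12$. The $L^6$ estimate leaves room, with the factor $\langle\sigma\rangle^{\sigma-1}$ absorbing the loss from $b=\tfrac12$.
\item[(ii)] \emph{High-high-high to low}, $N\ll N_1\sim N_2\sim N_3$. The resonance has size $|H|\gtrsim N_1^2N$ or larger unless a cancellation occurs. We use $\langle\sigma_{\max}\rangle^{1/2-}\gtrsim |H|^{1/2-}$ together with the bilinear $N^{-1}$ gain to cover the weight $N_1^{-3s}N^{s-1}$.
\item[(iii)] \emph{High-high-low}, with two input frequencies large and of opposite sign and one small. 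This is where the resonance can degenerate. The cubic phase satisfies
\[
\xi^3-\xi_1^3-\xi_2^3-\xi_3^3=3(\xi_1+\xi_2)(\xi_2+\xi_3)(\xi_3+\xi_1),
\]
so $H$ vanishes exactly on the planes $\xi_i+\xi_j=0$.
\end{itemize}

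Case (iii) is the main obstacle. Near the planes $\xi_i+\xi_j=0$ the output frequency matches an input frequency, e.g.\ $\xi\approx\xi_3$ with $N_1\sim N_2$ large. For $s<0$ we must then gain from $\langle\xi\rangle^{s-1}\langle\xi_3\rangle^{-s}\sim N^{-1}$ alone against $N_1^{-2s}$, which requires $N^{-1}N_1^{-2s}$ to be controlled. Since $N_1\ge N$, this fails unless the small parameter $|\xi_1+\xi_2|$ is exploited. We plan to apply the bilinear estimate to the pair $(u_1,u_2)$, which have separated derivatives of phase $\phi'(\xi_1)\approx\phi'(-\xi_1)$ only on different branches. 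That is, we use the $\pm$ splitting and estimate the same-branch and opposite-branch interactions separately, gaining $N_1^{-1}$ from transversality. Combined with $N^{-1}$, this covers $N_1^{-2s}$ for $s>-\tfrac12$. The remaining $\epsilon$-losses are exactly what forces $\sigma\le\sigma_0(s)$, with $\sigma_0-\tfrac12$ proportional to $s+\tfrac12$.

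For \eqref{1.10}, the extra factor is $\langle\tau\rangle^{s/3+1/2-\sigma}$ in place of $\langle\xi\rangle^{s}$. We split according to whether $|\tau|\lesssim\langle\xi\rangle^3$, in which case $\langle\tau\rangle^{s/3}\lesssim\langle\xi\rangle^{s}$ for $s\ge0$ and the reverse inequality, compensated by $\langle\tau\rangle^{1/2-\sigma}\le1$, is used for $s<0$. The point is that $\langle\tau\rangle\sim\langle\xi\rangle^3$ up to $\langle\sigma\rangle$, so the factor $\langle\tau\rangle^{s/3}$ can be traded for $\langle\xi\rangle^{s}$ times a power of $\langle\sigma\rangle$. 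In the opposite region, $|\sigma|\sim|\tau|\gg\langle\xi\rangle^3$. There the full weight is bounded by $\langle\sigma\rangle^{s/3-1/2}\cdot\langle\sigma\rangle^{-1/2+}$, which is better than in \eqref{1.9}, and we reduce to the same multiplier estimates.
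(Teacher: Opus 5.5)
\textbf{The gap is in case (iii).} Your dyadic/bilinear route differs from the paper's, which works throughout with H\"older's inequality in mixed $L^p_xL^q_t$ norms. The real problem, though, is that case (iii), the step you single out as the crux, fails as proposed.

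Take $\xi_2\approx-\xi_1$ with $|\xi_1|\sim N_1\gg1$. Since $\phi'$ is odd, the transversality quantity for the pair $(u_1,u_2)$ on branches $\epsilon_1,\epsilon_2$ is $|\epsilon_1\phi'(\xi_1)-\epsilon_2\phi'(\xi_2)|\approx|\epsilon_1+\epsilon_2|\,|\phi'(\xi_1)|$. So the two high-frequency waves are separated on the \emph{same} branch and not at all on opposite branches; you have this reversed. The opposite-branch configuration is the $u\bar u$-type interaction. There the resonance also degenerates, $|H|\approx3\xi_1^2|\xi_1+\xi_2|$ up to lower order. Your argument therefore gives no gain in exactly the subcase it is meant to treat.

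The missing observation is that neither transversality nor smallness of $|\xi_1+\xi_2|$ is needed. In the paper's Cases C and E:
\begin{itemize}
\item a full derivative is taken on a \emph{single} high-frequency factor by the Kato smoothing bound $\|D_xP^nF_\rho\|_{L^\infty_xL^2_t}\lesssim\|f\|_{L^2}$;
\item this is paired with a maximal-function bound in $L^2_xL^\infty_t$ on a low-frequency factor: Lemma~\ref{lem2.8} with the $\Lambda_\sigma$ weight $\langle L_j\rangle^{\sigma}$ when $|\xi_j|\le1$, or $\|J_x^{-1}F_\rho\|_{L^2_xL^\infty_t}$, paid for by $\langle\xi_4\rangle^{-1}$, when the low frequency exceeds $1$;
\item the remaining two factors go in $L^4_{x,t}$.
\end{itemize}
The gain $N_1$ beats the loss $N_1^{-2s}$ because $-2s<1$, whatever the branches. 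These estimates need modulation exponents $>\frac12$ (resp.\ $>\frac13$), while the total budget is only $\frac52-\sigma<2$. The paper handles this by lowering the exponent of the largest $\langle L_j\rangle$ and raising the others. A bilinear version of your argument would need the same bookkeeping, and should pair each high factor with a low one, which is always transverse.

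\textbf{Two smaller points.} First, the constraint $\sigma\le\sigma_0$ with $\sigma_0-\frac12\sim s+\frac12$ comes from your case (ii) (the paper's Case F), not from case (iii). In case (ii) one actually has $|H|\gtrsim|\xi_1\xi_2\xi_3|\sim N_1^3$ with no exceptional set (Lemma~\ref{lem5.1}; every combination $\pm a^3\pm b^3\pm(a+b)^3$ has modulus $\ge3ab(a+b)$). When the output modulation dominates, absorbing $N_1^{-3s}$ through $\langle L_4\rangle^{1-\sigma}$ forces $\sigma\le1+s$. The bound $N_1^2N$ you state is too weak for this when $s$ is near $-\frac12$.

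Second, for \eqref{1.10}, the step ``compensated by $\langle\tau\rangle^{1/2-\sigma}\le1$'' is wrong. For $s<0$ and $|\tau|\ll\langle\xi\rangle^3$, the $Z^{s,\sigma-1}$ weight exceeds the $X^{s,\sigma-1}$ weight by $(\langle\xi\rangle^3/\langle\tau\rangle)^{-s/3}$, which $\langle\tau\rangle^{1/2-\sigma}\sim1$ cannot absorb. The reduction to \eqref{1.9} holds only where $|\xi|\lesssim1$ or $\langle\tau\rangle\gtrsim\langle\xi\rangle^3$. In the remaining region one uses $\langle L\rangle\sim\langle\xi\rangle^3$ to bound the output weight by $\langle\xi\rangle^{-2}$ (for $\sigma\le\frac23$), with no modulation left on that factor. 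That is a different multiplier estimate, not ``the same'' one, and it needs its own case analysis. The paper does this in a second pass, converting $\langle\xi_4\rangle^{-2}$ back into $\langle L_4\rangle^{-2/3}$ where needed.
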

	With the above trilinear estimates, through a standard contraction mapping principal technique, the following local well-posedness can be obtained based on the studies of linearized problem of SOBE in \cite{esfahani2012local,esfahani2014bilinear,li2020low,li2022lower}.
	For IBVP, we denote $\vec{h}:=\left( h_1,h_2,h_3 \right) \in \mathcal{H}^s\left( \mathbb{R}^+ \right)$ with,
	$$\mathcal{H}^s\left( \mathbb{R}^+ \right) :=H^{\frac{s+1}{3}}\left( \mathbb{R}^+ \right) \times H^{\frac{s-1}{3}}\left( \mathbb{R}^+ \right) \times H^{\frac{s-3}{3}}\left( \mathbb{R}^+ \right).$$

	\begin{thm}\label{thm1.5}
		Let $s>-\frac{1}{2}$ be given, there exists $\sigma_0=\sigma_0\left( s \right) \in \left( \frac{1}{2},1 \right)$, then for any $\sigma\in\left( \frac{1}{2},\sigma _0 \right]$, we can find $T=T\left( s,\sigma \right) >0$ such that if, $$\left( \varphi ,\psi ,\vec{h} \right) \in H^s\left( \mathbb{R}^+ \right) \times H^{s-1}\left( \mathbb{R}^+ \right) \times \mathcal{H}^s\left( \mathbb{R}^+ \right) ,$$
		then the IBVP \eqref{1.1} admits a unique solution $u\in \mathcal{X}_{T}^{s,\frac{1}{2},\sigma}$, and the corresponding solution map is real analytic.
	\end{thm}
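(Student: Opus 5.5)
The proof of Theorem \ref{thm1.5} follows the standard Duhamel–boundary-forcing scheme used for the quadratic SOBE in \cite{li2020low,li2022lower}, with the trilinear estimates of Theorem \ref{thm1.3} replacing the bilinear ones. First rewrite \eqref{1.1} as a first-order system in time, or equivalently use the solution operator of the linear problem on $\mathbb{R}^+$. Extend the initial data $(\varphi,\psi)$ to $(\tilde\varphi,\tilde\psi)\in H^s(\mathbb{R})\times H^{s-1}(\mathbb{R})$ with comparable norms. Then look for $u$ as the restriction to $\mathbb{R}^+\times(0,T)$ of
\[
u=\eta(t)W_{\mathbb{R}}(t)(\tilde\varphi,\tilde\psi)+\eta(t)\mathcal{D}\big((u^3)_{xx}\big)+\eta(t)\mathcal{W}_{bdr}\big(\vec h-\vec p\big),
\]
where:
\begin{itemize}
\item $W_{\mathbb{R}}$ is the free group of \eqref{1.2};
\item $\mathcal{D}F$ is the Duhamel term, whose Fourier symbol involves $\xi^2\hat F/\phi(\xi)$ times $\sin((t-t')\phi(\xi))$;
\item $\vec p$ is the vector of traces $(u,u_{xx},u_{xxxx})|_{x=0}$ of the first two pieces;
\item $\mathcal{W}_{bdr}$ is the boundary integral operator built from the Laplace transform in $t$, with the three roots of the characteristic equation lying in the left half-plane, as constructed in \cite{li2020low,li2022lower}.
\end{itemize}
Call this right-hand side $\Gamma(u)$. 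The goal is to show that $\Gamma$ is a contraction on a ball of $\mathcal{X}^{s,\frac12,\sigma}_T$.

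The linear estimates are taken from the cited linear theory. For $\sigma\in(\frac12,\sigma_0]$ they read:
\begin{itemize}
\item $\|\eta W_{\mathbb{R}}(\tilde\varphi,\tilde\psi)\|_{X^{s,\frac12,\sigma}\cap C_tH^s}\lesssim\|\varphi\|_{H^s}+\|\psi\|_{H^{s-1}}$, with the traces lying in $\mathcal{H}^s(\mathbb{R}^+)$. This is the Kato smoothing and trace estimate for the space weights $\langle\tau\rangle^{(s+1)/3}$ and so on.
\item $\|\eta\mathcal{D}F\|_{X^{s,\frac12,\sigma}\cap C_tH^s}\lesssim\|(\xi^2\hat F/\phi)^\vee\|_{X^{s,\sigma-1}}+\|(\xi^2\hat F/\phi)^\vee\|_{Z^{s,\sigma-1}}$. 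The traces of $\mathcal{D}F$ are bounded in $\mathcal{H}^s$ by the same quantity; this is exactly why the intermediate space $Z^{s,\sigma-1}$ is introduced, since it controls the time-regularity $\langle\tau\rangle^{s/3+1/2-\sigma}$ needed for the traces.
\item $\|\eta\mathcal{W}_{bdr}\vec g\|_{X^{s,\frac12,\sigma}\cap C_tH^s}\lesssim\|\vec g\|_{\mathcal{H}^s}$ for $s$ in the admissible range. Here $s>-\frac12$ is compatible with the range where the boundary operator is bounded. Some compatibility conditions arise when $s>\frac12$, and these are handled by imposing them on the data as usual.
\end{itemize}

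Combining these linear estimates with \eqref{1.9}–\eqref{1.10} gives
\[
\|\Gamma u\|\le C\big(\|(\varphi,\psi,\vec h)\|\big)+CT^{\epsilon}\|u\|^3 ,
\]
together with a corresponding difference estimate using $u^3-v^3=(u-v)(u^2+uv+v^2)$.

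The factor $T^\epsilon$ is the main obstacle. Theorem \ref{thm1.3} has no gain of a power of $T$ and sits at the endpoint $b=\frac12$. I would try two remedies in order:
\begin{itemize}
\item Prove the trilinear estimate with $\sigma-1$ replaced by $\sigma-1+\epsilon$. This should follow from the same multiplier argument, since $\sigma_0$ is chosen with room to spare. Then use the time-localization inequality $\|\eta_T F\|_{X^{s,\sigma-1}}\lesssim T^{\epsilon'}\|F\|_{X^{s,\sigma-1+\epsilon}}$, and its analogue in $Z$.
\item Alternatively, use a scaling argument in $x,t$ to make the data small. This is less clean because the dispersion relation $\phi$ is not homogeneous.
\end{itemize}

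Choosing the ball radius $2C\|(\varphi,\psi,\vec h)\|$ and $T$ small then gives a contraction. The fixed point yields existence and uniqueness in $\mathcal{X}^{s,\frac12,\sigma}_T$, and continuity in time comes from the $C_tH^s$ parts of the linear estimates. Real analyticity of the solution map follows because $\Gamma$ is a sum of bounded linear maps of the data and a bounded trilinear map of $u$. The implicit function theorem in the analytic category applied to $u-\Gamma(u)$ then gives analytic dependence on $(\varphi,\psi,\vec h)$.
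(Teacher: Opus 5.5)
Your outline is essentially the paper's own proof. That proof is just the one-line assembly of the linear estimates (Lemmas \ref{lem 3.3}, \ref{lem 3.4}, \ref{lem 3.5}) with the trilinear estimates \eqref{1.9}--\eqref{1.10} of Theorem \ref{thm1.3} via the contraction mapping principle, on the same representation $u=a+c+W_{bdr}(\vec h-\vec a-\vec c)$; the paper leaves implicit the gain $T^{\epsilon}$ you flag, and your first remedy (using the slack in the $\langle L_4\rangle^{1-\sigma}$ exponent for $\sigma<\sigma_0$, then time-localizing) is the standard fix, so your $T$ depends on the data size, which is the correct reading of the statement.
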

	In addition, we also present the conclusion on trilinear estimate related to the IVP of SOBE. 
	\begin{thm}\label{thm1.4}
		For any $s>-\frac{1}{2}$, there exists a $b_0=b_0(s)>\frac{1}{2}$ such that for any $b\in \left( \frac{1}{2},b_0 \right]$,
		\begin{equation}\label{1.11}
		\Big\lVert \left( \frac{\xi ^2\widehat{uvw}}{\phi \left( \xi \right)} \right) ^{\lor} \Big\rVert _{X^{s,b-1}}\lesssim \lVert u \rVert _{X^{s,b}}\lVert v \rVert _{X^{s,b}}\lVert w \rVert _{X^{s,b}}.
		\end{equation}
	\end{thm}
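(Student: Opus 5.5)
We would prove \eqref{1.11} by duality and reduce it to a weighted $L^2$ estimate on the hyperplane $\xi=\xi_1+\xi_2+\xi_3$, $\tau=\tau_1+\tau_2+\tau_3$. Set $f_j(\xi_j,\tau_j)=\langle\xi_j\rangle^s\langle|\tau_j|-\phi(\xi_j)\rangle^b|\hat u_j|$ and $f_4(\xi,\tau)=\langle\xi\rangle^{-s}\langle|\tau|-\phi(\xi)\rangle^{1-b}|\hat g|$ for a dual function $g\in X^{-s,1-b}$. Since $\xi^2/\phi(\xi)\sim |\xi|\langle\xi\rangle^{-1}\cdot\langle\xi\rangle^{-1}\lesssim\langle\xi\rangle^{-1}$ and it vanishes at $\xi=0$, \eqref{1.11} follows from
\[
\int_{*}\frac{\langle\xi\rangle^{s-1}\prod_{j=1}^3\langle\xi_j\rangle^{-s}\; f_1f_2f_3f_4}{\langle\sigma\rangle^{1-b}\prod_{j=1}^3\langle\sigma_j\rangle^{b}}\lesssim\prod_{j=1}^4\|f_j\|_{L^2},
\]
where $\sigma=|\tau|-\phi(\xi)$ and $\sigma_j=|\tau_j|-\phi(\xi_j)$. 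In Tao's language this is a $[4;\mathbb{R}^2]$-multiplier bound. We split the signs of $\tau,\tau_j$ into finitely many cases. In each case $|\tau|=\pm\tau$, and since $\phi(\xi)=|\xi|^3+\tfrac k2|\xi|+O(|\xi|^{-1})$ for large $|\xi|$, the problem becomes a perturbation of the cubic Airy-type phase $\pm|\xi|^3$. The low-frequency part $|\xi_j|\lesssim1$ is handled separately and trivially.

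The core is the resonance function. Consider the ``all same sign'' case, which behaves like mKdV with $|\xi|^3$ in place of $\xi^3$, so it is mKdV-like on each half-line. Here
\[
\max(\langle\sigma\rangle,\langle\sigma_j\rangle)\gtrsim |H|,\qquad H=\phi(\xi)-\sum_j\pm\phi(\xi_j),
\]
and in the region where all $\xi_j$ share a sign, $H\approx 3(\xi_1+\xi_2)(\xi_2+\xi_3)(\xi_1+\xi_3)$ up to lower-order errors. In mixed-sign regions $H$ contains terms like $2|\xi_j|^3$, which are even larger and so easier. We then partition into dyadic pieces $|\xi_j|\sim N_j$, $|\xi|\sim N$, and $\max\langle\sigma\rangle\sim L$, and argue as follows.

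(a) \emph{High$\times$high$\times$high $\to$ high, nonresonant.} Here $|H|\gtrsim N^3$. We gain $L^{-(1-b)}\lesssim N^{-3(1-b)}$ from the largest modulation and use the bilinear smoothing estimate $\|(\hat u_1*\hat u_2)\|_{L^2}\lesssim |\xi_1\pm\xi_2|^{-1/2}$ (a Kato-smoothing/transversality estimate for the curve $\tau=\phi(\xi)$) together with $L^4_{t,x}$ Strichartz estimates with a $1/12$ derivative gain coming from $L^4$ (Kenig–Ponce–Vega). This closes for $s>-\tfrac12$.

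(b) \emph{Resonant regions.} These are $\xi_1+\xi_2\approx0$ and its analogues, where $H$ is small. There one input frequency is comparable to the output frequency. We use the factor $\langle\xi\rangle^{s-1}$, which carries an extra $\langle\xi\rangle^{-1}$ compared with mKdV, to absorb $\langle\xi_j\rangle^{-s}$ losses up to $s>-\tfrac12$. We combine it with the bilinear estimate in the small pair, which gives $|\xi_1+\xi_2|^{-1/2}$ smallness compensation integrated via the dyadic sum.

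(c) \emph{High$\times$high$\to$low} and \emph{low$\times$high$\to$high} configurations. These are handled with the bilinear $L^2$ estimate plus $L^\infty_\xi L^2_\tau$ counting. We use Tao's Lemma on measure of $\{\tau: |\tau-h(\xi)|\le L\}$ sets with $|h'|\gtrsim N^2$.

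Throughout, the choice $b\in(\tfrac12,b_0]$ enters only through an $\varepsilon=b-\tfrac12$ loss. We choose $b_0-\tfrac12$ smaller than the strictly positive slack $\min(s+\tfrac12,\cdot)$ left in each case so that the dyadic sums converge.

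The main obstacle is the fully resonant region with $s$ close to $-\tfrac12$, for instance $\xi_1\approx-\xi_2$ with $|\xi_1|\sim|\xi_2|\sim N\gg|\xi_3|\sim|\xi|$. There $\langle\xi_1\rangle^{-s}\langle\xi_2\rangle^{-s}\sim N^{-2s}$ approaches $N$, while $H\sim N^2|\xi_1+\xi_2|$ can be $O(1)$. We plan first to split by the size of $|\xi_1+\xi_2|$. In the piece $|\xi_1+\xi_2|\gtrsim N^{-1}$ we use the modulation gain. For the tiny set $|\xi_1+\xi_2|\lesssim N^{-1}$ we use the Cauchy–Schwarz measure bound directly, exploiting $\xi^2/\phi(\xi)$ vanishing like $\xi^{-1}$ and the smallness of that set. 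We expect the exponent bookkeeping there to dictate exactly the threshold $s>-\tfrac12$. If this direct route fails, we would interpolate between the $b=\tfrac12$ estimate \eqref{1.9} of Theorem \ref{thm1.3} (with the $\Lambda_\sigma$ part being irrelevant at high frequency) and a crude $b=1$ estimate, which is how we would attempt to transfer the IBVP bound to the IVP setting.
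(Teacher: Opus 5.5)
\textbf{There is a genuine gap in your main route: the resonance picture it is built on is wrong for this two-sheeted dispersion relation.}

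On the characteristic sets, $\tau_j\approx\delta_j\xi_j^3$ with $\delta_j=\operatorname{sgn}(\tau_j\xi_j)$. The output is resonant when $\sum_{j\le 3}\delta_j\xi_j^3\approx\delta\,\xi^3$, where $\delta=\operatorname{sgn}(\tau\xi)$. Only when all four signs agree does this reduce to the mKdV factorization $(\xi_1+\xi_2)(\xi_2+\xi_3)(\xi_1+\xi_3)=0$. In the mixed patterns the extra $2\xi^3$-type term does not make $|H|$ larger; it can cancel the product.

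For instance, take $(\xi_1,\xi_2,\xi_3)=N(1,1,t)$ with $\tau_1,\tau_2>0>\tau_3$ on the sheets, where $t\approx-1.32$ is the real root of $t^3+3t^2+6t+5=0$. This produces an output $(\xi,\tau)\approx\bigl((2+t)N,\,-\phi((2+t)N)\bigr)$, which lies on the characteristic set up to lower-order terms. That is a resonant high$\times$high$\times$high$\to$high interaction, so the bound $|H|\gtrsim N^3$ in your step (a) is false.

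Your ``main obstacle'' region has the same problem. There $\xi_1\approx-\xi_2$ with $|\xi_1|\sim N$, and $\xi_3\approx\xi$ with $|\xi|\sim M\ll N$. The pattern $\delta_1=\delta_2=\delta_3=-\delta$ gives $H\approx\pm\bigl(3\xi_1^2(\xi_1+\xi_2)+2\xi_3^3\bigr)$. This vanishes at $|\xi_1+\xi_2|\sim M^3/N^2$. For $N^{1/3}\ll M\ll N$, that point lies inside the piece $|\xi_1+\xi_2|\gtrsim N^{-1}$ where you intended to use the modulation gain. So your (a)/(b) split does not separate resonant from nonresonant interactions. The sketch gives no argument for the misclassified interactions, and it does not identify which interaction actually limits $b$.

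\textbf{How the paper handles these regions.} It uses resonance in only one region.
When $|\xi|\sim|\xi_{\max}|\sim N$, the weight $\langle\xi_1\rangle^{p}\langle\xi_2\rangle^{p}\langle\xi_3\rangle^{p}\langle\xi\rangle^{-p-1}$ (with $p=-s$) is $\lesssim N^{2p-1}\le1$. Then $L^6_{x,t}$ Strichartz (Lemma~\ref{lem2.10}) on the inputs and Plancherel on the output suffice. This covers the resonant example above.

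When $|\xi_{\min}|\lesssim|\xi|\ll|\xi_{\mathrm{med}}|\sim|\xi_{\max}|$ (your main obstacle), the weight is $\lesssim|\xi_{\max}|/\langle\xi_{\min}\rangle$; this is where $p<\frac12$ is used. The paper closes this region with three estimates:
Kato smoothing $\|D_xP^nF_b\|_{L^\infty_xL^2_t}$ on one high factor;
the maximal estimate $\|J_x^{-1}F_b\|_{L^2_xL^\infty_t}$ on the low input;
and $L^4_{x,t}$ on the other two factors, which needs $1-b>\frac13$.
In your language, this is high--low transversality, not modulation.

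Resonance is needed only for high$\times$high$\times$high$\to$low, i.e. $|\xi|\ll|\xi_{\min}|$. There Lemma~\ref{lem5.1} gives $|H|\gtrsim|\xi_1\xi_2\xi_3|$. Requiring $|\xi_1\xi_2\xi_3|^{p}\langle H\rangle^{b-1}\lesssim1$ then forces $b\le 1-p$, and this is what fixes $b_0$.

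\textbf{Your fallback does work, but it needs neither interpolation nor a $b=1$ estimate.} The left side of \eqref{1.9} is already $X^{s,\sigma-1}$ with $\sigma>\frac12$. For $|\xi|\le1$ one has $\langle\tau\rangle\lesssim\langle|\tau|-\phi(\xi)\rangle$, so $\|u\|_{\varLambda_b}\lesssim\|u\|_{X^{s,b}}$. Hence $\|u\|_{X^{s,\frac12,b}}\lesssim\|u\|_{X^{s,b}}$. Taking $\sigma=b\in(\frac12,\sigma_0]$ in \eqref{1.9} then gives \eqref{1.11} at once, with $b_0=\sigma_0(s)$. Note that the $\varLambda_\sigma$ term lives at low frequency, not high. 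It is harmless because it is dominated by $X^{s,b}$, not because it is irrelevant.
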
 
	\noindent It will then lead to the local wellposedness below and one shall notice that we do improve such result when comparing to $s>\frac{1}{2}$ in \cite{zhong2024local} and $s>-\frac{1}{6}$ in \cite{esfahani2025well}.
	\begin{thm}\label{thm1.6}
		Let $s>-\frac{1}{2}$ be given, there exists $b_0=b_0\left( s \right) \in \left( \frac{1}{2},1 \right)$, then for any $b\in\left( \frac{1}{2},b_0 \right]$, we can find $T=T\left( s,b \right) >0$ such that if, $$\left( \varphi ,\psi \right) \in H^s\left( \mathbb{R} \right) \times H^{s-1}\left( \mathbb{R} \right),$$
		then the IVP \eqref{1.2} admits a unique solution $u\in X_{T}^{s,b}$, and the corresponding solution map is real analytic.
	\end{thm}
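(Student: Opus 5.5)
The plan is to prove Theorem \ref{thm1.6} by a contraction mapping argument in $X_T^{s,b}$, using the trilinear estimate \eqref{1.11} of Theorem \ref{thm1.4} together with standard linear estimates for the SOBE propagator. First I will rewrite \eqref{1.2} in Duhamel form. Factor $\partial_t^2+\phi(D)^2$ with $\phi(\xi)=\sqrt{\xi^6+k\xi^4+\xi^2}$; since $k=\pm1$ we have $\phi(\xi)\sim|\xi|\langle\xi\rangle^2$. The solution is then
\[
u(t)=\cos(t\phi(D))\varphi+\frac{\sin(t\phi(D))}{\phi(D)}\psi''+\int_0^t\frac{\sin((t-t')\phi(D))}{\phi(D)}\partial_x^2(u^3)(t')\,dt'.
\]
Writing $\sin$ and $\cos$ as combinations of $e^{\pm it\phi(D)}$ decomposes everything into two half-wave groups whose space-time Fourier supports lie on $\tau=\pm\phi(\xi)$. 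The weight $\langle|\tau|-\phi(\xi)\rangle$ in $X^{s,b}$ measures the distance to either branch. The multiplier $\xi^2/\phi(\xi)$ in \eqref{1.11} is exactly the one appearing in the Duhamel term, and $\xi^2/\phi(\xi)\sim|\xi|/\langle\xi\rangle^2$ is bounded, so $\psi''/\phi(D)$ is controlled in $H^s$ by $\|\psi\|_{H^{s-1}}$ in the low/high frequency sense used in \cite{esfahani2012local}.

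Second, I will recall (or quickly reprove by the standard Kenig--Ponce--Vega / Ginibre argument, applied to each branch $e^{\pm it\phi(D)}$) the linear estimates. Let $\eta$ be a time cutoff and $\eta_T=\eta(t/T)$, and take $\frac12<b<b'<1$ or $b'\in(b,1)$ suitable. Then
\[
\|\eta(t)e^{\pm it\phi(D)}f\|_{X^{s,b}}\lesssim\|f\|_{H^s},
\qquad
\Big\|\eta(t)\int_0^te^{\pm i(t-t')\phi(D)}F\,dt'\Big\|_{X^{s,b}}\lesssim\|F\|_{X^{s,b-1}},
\]
and $\|\eta_TF\|_{X^{s,b-1}}\lesssim T^{\epsilon}\|F\|_{X^{s,b'-1}}$ for some $\epsilon>0$. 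Because $X^{s,b}$ uses $|\tau|$, these follow by splitting $\hat u$ into its $\tau>0$ and $\tau<0$ parts, which reduces each to a one-sided group. These are the results cited from \cite{esfahani2012local,esfahani2014bilinear}.

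Third, I will run the fixed-point argument. Define $\Gamma u$ as the cut-off Duhamel map. Fix $b\in(\frac12,b_0]$ with $b_0$ from Theorem \ref{thm1.4}, shrinking it slightly so that some $b'>b$ is still admissible in \eqref{1.11}. Then
\[
\|\Gamma u\|_{X^{s,b}}\le C\|(\varphi,\psi)\|+CT^{\epsilon}\|u\|_{X^{s,b}}^3,
\]
and by trilinearity $u^3-v^3=(u-v)(u^2+uv+v^2)$ the difference satisfies
\[
\|\Gamma u-\Gamma v\|\le CT^{\epsilon}\big(\|u\|^2+\|v\|^2\big)\|u-v\|.
\]
Choosing $T$ small makes $\Gamma$ a contraction on a ball in $X^{s,b}$. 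The restriction of its fixed point gives the solution in $X_T^{s,b}$, with uniqueness obtained in the usual way through the quotient-norm definition. Real analyticity follows because $\Gamma$ is a polynomial (cubic) map in $u$ plus a linear map of the data, so the implicit function theorem in the analytic category applies.

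The main obstacle is really the trilinear estimate, which the statement lets us assume via Theorem \ref{thm1.4}. What remains delicate is gaining the factor $T^\epsilon$. This requires applying \eqref{1.11} with $b'-1$ slightly larger than $b-1$, which is why the admissible range $(\frac12,b_0]$ must be open enough: I would choose $b<b_0$ strictly and then use $b'=b_0$. The other delicate point is the low-frequency behaviour of $\psi''/\phi(D)$ near $\xi=0$, where $\phi(\xi)\sim|\xi|$. There the quotient $\xi^2/\phi$ vanishes, so no singularity arises.
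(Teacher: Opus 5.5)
The overall scheme you propose is the same as the paper's one-line argument: Duhamel form, Lemma \ref{lem3.1} and Lemma \ref{lem3.2}, the trilinear estimate, a contraction, and analyticity from the polynomial structure. The problem is the step you yourself flag as delicate, the gain of $T^{\epsilon}$, which fails as you propose it. Estimate \eqref{1.11} is diagonal in $b$. Used at $b'=b_0$, it bounds $\|(\xi^2\widehat{uvw}/\phi)^{\vee}\|_{X^{s,b_0-1}}$ by $\|u\|_{X^{s,b_0}}\|v\|_{X^{s,b_0}}\|w\|_{X^{s,b_0}}$. For $b<b_0$ these norms are not controlled by the $X^{s,b}$ norms you iterate in: the inclusion is $X^{s,b_0}\subset X^{s,b}$, and a time cutoff $\eta_T$ does not improve modulation regularity. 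If you instead iterate in $X^{s,b_0}$, Lemma \ref{lem3.2} with forcing in $X^{s,b_0-1}$ gives $T^{1+(b_0-1)-b_0}=T^{0}$. So no choice of indices inside the family \eqref{1.11} produces a positive power of $T$, and for large data the contraction does not close. The paper's one-line proof hides the same point, since Lemma \ref{lem3.2} only yields $T^{1+b'-b}$, where $b'$ is the modulation index of the forcing.

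What you need is the off-diagonal estimate
\[
\Big\|\Big(\frac{\xi^2\widehat{uvw}}{\phi(\xi)}\Big)^{\vee}\Big\|_{X^{s,b'-1}}\lesssim\|u\|_{X^{s,b}}\|v\|_{X^{s,b}}\|w\|_{X^{s,b}},\qquad \tfrac12<b<b'\le b_0 .
\]
This is not implied by the statement of Theorem \ref{thm1.4}, but it does follow from its proof if you replace $\langle L_4\rangle^{1-b}$ by $\langle L_4\rangle^{1-b'}$ in \eqref{4.3}. The case analysis goes through as follows.

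In Cases A and D the fourth function is placed in $L^2_xL^2_t$, where any nonnegative exponent is allowed.

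In Cases B, C and E it is placed in $L^4_xL^4_t$. By \eqref{2.16} this needs only $1-b'>\frac13$, which holds because $b'\le b_0=1-p<\frac23$.

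In Case F, suppose $\langle L_4\rangle$ is the maximum. Then $J\lesssim|\xi_1\xi_2\xi_3|^{p}\lesssim\langle L_4\rangle^{1-b'}$ provided $b'\le1-p$. If instead some $\langle L_j\rangle$ with $j\le3$ is the maximum, the transfer $\langle L_j\rangle^{b}\langle L_4\rangle^{1-b'}\ge\langle L_j\rangle^{1-b'}\langle L_4\rangle^{b}$ uses only $b+b'\ge1$. After that, the remaining factors go into $L^6_xL^6_t$ via \eqref{2.17}.

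With this estimate in hand, Lemma \ref{lem3.2} at $b'=b_0$ gives the factor $T^{b_0-b}$. The rest of your argument (contraction, uniqueness, analyticity) then goes through for $b\in(\frac12,b_0)$. Since the theorem only asserts the existence of some $b_0$, you may relabel to recover a closed interval.
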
 
	According to above statements, one can observe that the key to establish the trilinear estimates for $(u^3)_{xx}$ with various conditions of $b$ in $X^{s,b}$ space. There are mainly two approaches to address multilinear estimates for dispersive equations. \textbf{Method \uppercase\expandafter{\romannumeral1}} is known as $\left[ k;Z \right]$-multiplier norm method introduced by Tao \cite{tao2001multilinear}. One of the most important advantages of this method is that it can naturally lead to the optimal choice of the index $s$ in $X^{s,b}$  through delicate analysis. Under such a framework, there are two strategies to handle multilinear estimate:
	\begin{itemize}
		\item Direct analysis;
		\item reducing the higher order multilinear estimate into multiple related lower order ones.
	\end{itemize}In fact, the direct analysis idea was first conducted on KdV equation:
	\be\label{1.12}
	\left\{ \begin{array}{l}
		u_t+u_{xxx}+\left( \frac{u^2}{2} \right) _x=0,\ x,t\in \mathbb{R},\\
		u\left( x,0 \right) =\varphi\left( x \right) ,\\
	\end{array} \right.
	\ee
	by Kenig \cite{kenig1996bilinear} to obtain the local well-posedness for $s>-\frac{3}{4}$ on $X^{s,b}$ with $b>\frac{1}{2}$. This strategy was also used by Erdogan and Tzirakis to study the cubic
	nonlinear Schrödinger equation on the half line \cite{erdougan2016regularity},
	\be\label{1.13}
	\left\{ \begin{array}{l}
		iu_t+u_{xx}+\left| u \right|^2u=0,\ x>0,\ t>0,\\
		u\left( x,0 \right) =\varphi\left( x \right) ,\ u\left( 0,t \right) =h\left( t \right) ,\\
	\end{array} \right.
	\ee
	and the derivative cubic nonlinear
	Schrödinger equation on the half line \cite{erdougan2018derivative},
	\be\label{1.14}
	\left\{ \begin{array}{l}
		iu_t+u_{xx}-i\left( \left| u \right|^2u \right) _x=0,\,\,x>0,\,\,t>0,\\
		u\left( x,0 \right) =\varphi\left( x \right) ,\,\,u\left( 0,t \right) =h\left( t \right) ,\\
	\end{array} \right.
	\ee
	in $X^{s,b}$ for $b<\frac{1}{2}$ and $s>0$. We shall point out that the comprehensive multilinear estimate is applicable for dispersive equations with simple structure. For example, the Schrödinger equation, its $X^{s,b}$ norm is,
	$$\lVert u \rVert _{X^{s,b}}=\lVert \left< \xi \right> ^s\left< \tau +\xi ^2 \right> ^b\hat{u}\left( \xi ,\tau \right) \rVert _{L_{\xi ,\tau}^{2}},$$
	as well as the KdV equation, Coupled-KdV system and some other dispersive systems. But for the SOBE, its $X^{s,b}$ norm is defined as,
	$$\lVert u \rVert _{X^{s,b}}=\lVert \left< \xi \right> ^s\left< \left| \tau \right|-\left| \xi \right|\sqrt{\xi ^4+\xi ^2+1} \right> ^b\hat{u}\left( \xi ,\tau \right) \rVert _{L_{\xi ,\tau}^{2}}.$$
	Realling the discussion in \cite{esfahani2012local,esfahani2014bilinear,li2020low,li2022lower}, one needs to analysis more than ten cases for sign of both $\xi$ and $\tau$ in the resonance function  when conducting analysis  for related bilinear estimates. On the other hand, for the cubic SOBE, there will be hundreds of cases to be discussed, which seems too difficult to handle. The second strategy was introduced by Tao (see Lemma 3.7 in \cite{tao2001multilinear}). By such strategy, Tao proved the local well-posedness of the mKdV equation,
	\be\label{1.15}
	\left\{ \begin{array}{l}
		u_t+u_{xxx}+u^2u_x=0,\,\,x,t\in \mathbb{R},\\
		u\left( x,0 \right) =u_0\left( x \right) ,\\
	\end{array} \right. 
	\ee
	for $s\ge\frac{1}{4}$ \cite{tao2001multilinear} by reducing the trilinear estimate into several bilinear estimates and then, applied symmetry techniques to unify these bilinear estimates into a single bilinear estimate which makes the problem easier. This reduction strategy (refered as TT* decomposition) had been sucessfully applied for many dispersive equations and systems in the past two decades, such as modified Kawahara equations\cite{chen2009low} and some coupled systems \cite{carvajal2019sharp,zhang2021trilinear}. Inspired by this idea, Zhong and Li \cite{zhong2024local} established the local well-posedness of IVP \eqref{1.2} for $s>\frac{1}{2}$. However, the application of symmetry techniques may bring some loss in regularity and as a result, it does not always achieve the optimal conclusion. Indeed, Esfahani's recent result \cite{esfahani2025well}($s>-\frac{1}{6}$) does imply that $s>\frac{1}{2}$ is not sharp. Therefore, we intend to use the other approach for multilinear estimate to improve the previous result in \cite{zhong2024local,esfahani2025well}.\par 
	\textbf{\text{Method \uppercase\expandafter{\romannumeral2}}} is based on $L^pL^q$ estimates. Under such framework, there are also two strategies:
	\begin{itemize}
		\item Restricting the solution in the standard $L^pL^q$ spaces for $s>0$;
		\item handling the multilinear estimate in $X^{s,b}$ through the help of $L^pL^q$ estimates.
	\end{itemize}
	The first case was introduced by Kenig \cite{kenig1993well} to study the generalized KdV equation:
	\be\label{1.16}
	\left\{ \begin{array}{l}
		u_t+u_{xxx}+u^{p-1}u_x=0,\,\,x,t\in \mathbb{R},\\
		u\left( x,0 \right) =\varphi\left( x \right) ,\\
	\end{array} \right.
	\ee
	where $p\in\mathbb{Z}^+$ and $p\ge2$. The second case was first conducted by Kenig \cite{kenig1993cauchy} to establish the local well-posedness of KdV equation \eqref{1.12} for $s>-\frac{5}{8}$. This strategy was later generilized by Huo and Jia to study the IVPs of fifth-order shallow water equations \cite{jia2009well}:
	$$u_t+\alpha u_{xxxxx}+\beta u_{xxx}+\gamma u_x+\mu f_1(u) =0,\ x,t\in \mathbb{R},$$ 
	where $\alpha\ne0$, $\alpha$, $\beta$, $\gamma\in\mathbb{R}$, $\mu\in\mathbb{C}$, $f_1(u)=\left( u^2+u^3 \right) _x+uu_{xx}+u_xu_{xx}$ and fourth-order nonlinear Schrödinger equations \cite{huo2005cauchy,huo2007refined}:
	$$iu_t+u_{xx}+\nu u_{xxxx}=f_2\left( u \right) ,\ x,t\in \mathbb{R},$$
	where $f_2\left( u \right) =-\frac{1}{2}\left| u \right|^2u+\lambda _1\left| u \right|^4u+\lambda _2\left( u_x \right) ^2\bar{u}+\lambda _3\left| u_x \right|^2u+\lambda _4u^2\left( \bar{u} \right) _{xx}+\lambda _5\left| u \right|^2u_{xx}$, $\lambda_j$ are some parameters depending on $\mu$ and $\nu$, with $\nu\ne0$ $\mu$, $\nu\in\mathbb{R}$. Their method can be applied to other dispersive equations and systems but only for their IVPs. In this article, we will first adapt such idea to trilinear estimates for the IBVP along with techniques introduced by Li, Yang, and Zhang \cite{li2022lower,li2020low,li2022non,li2023lower,li2018non,li2019wellposedness}.
	Moreover, we will also refine such idea and apply it on IVP \eqref{1.2}. We do believe that our method is generalizable to other dispersive equations and systems.\par 
	Before giving an explanation of our approach, let us first recall Tao's view for multilinear estimate in \cite{tao2001multilinear}. In fact, for a dispersive edquation with phase function $h(\xi)$, to show local well-posedness for IVP, we need to establish multilinear estimate in the form: 
	\be\label{1.17}
	\lVert G\left( u \right) \rVert _{X_{\tau =h\left( \xi \right)}^{s,b-1}}\lesssim\lVert u \rVert _{X_{\tau =h\left( \xi \right)}^{s,b}}^{k},
	\ee
	for a $k$-linear function $G$ of $u$ and $b>\frac{1}{2}$. Then, by duality, the $k$-linear estimate is equivalent to bound, 
	\be\label{1.18}
	\frac{\int_{A_{k+1}\left( \mathbb{R} \right)}{\left| \frac{\left< \xi _{k+1} \right> ^s\prod_{j=1}^{k+1}{m_j\left( \xi _j \right) f_j\left( \xi _j,\tau _j \right)}}{\left< \tau _{k+1}-h\left( \xi _{k+1} \right) \right> ^{1-b}\prod_{j=1}^k{\left< \xi _j \right> ^s\left< \tau _j-h\left( \xi _j \right) \right> ^b}} \right|}}{\prod_{j=1}^{k+1}{\lVert f_j \rVert _{L_{\xi}^{2}L_{\tau}^{2}}}},
	\ee
	where $m_j$ are some monomials, $f_j\in L^2(\mathbb{R}^2)$, $$F_{b}^{(j)}:=\left( \frac{m_j\left( \xi _j \right) f_j\left( \xi _j,\tau _j \right)}{\left< \xi _j \right> ^s\left< \tau _j-h\left( \xi _j \right) \right> ^b} \right) ^{\lor}\in S\left( \mathbb{R}^2 \right) \quad j=1,2,...,k,$$ $$F_{1-b}^{(k+1)}:=\left( \frac{\left< \xi _{k+1} \right> ^sm_{k+1}\left( \xi _{k+1} \right) f_{k+1}\left( \xi _{k+1},\tau _{k+1} \right)}{\left< \tau _{k+1}-h\left( \xi _{k+1} \right) \right> ^{1-b}} \right) ^{\lor}\in S\left( \mathbb{R}^2 \right),$$ and $A_k$ denotes the ``hyperplane":
	\begin{equation*}
	A_k\left( \mathbb{R} \right) :=\left\{ \left( \vec{\xi}, \vec{\tau} \right) :\sum_{j=1}^k{\xi _j}=\sum_{j=1}^k{\tau _j}=0 \right\} ,
	\end{equation*}
	with $\vec{\xi }=\left( \xi _1,...,\xi _k \right)$, $\vec{\tau }=\left( \tau _1,...,\tau _k \right)$, $k\in\mathbb{N}$, $k\ge3$. Next, according to the following lemma:
	\begin{lem}\label{lem1.7}(\cite{huo2005cauchy,huo2007refined,jia2009well})
		If $f_j\in S\left( \mathbb{R}^2 \right)$ , $k\in\mathbb{Z}^+$, $k\ge3$, then,
		$$\int_{A_k}{\prod_{j=1}^k{\widehat{f_j}\left( \xi _j,\tau _j \right)}=\int_{\mathbb{R}^2}{\left( \prod_{j=1}^k{f_j} \right) \left( x,t \right) dxdt}}.$$
	\end{lem}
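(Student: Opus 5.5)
The identity in Lemma~\ref{lem1.7} is a form of Plancherel's theorem applied to a product of $k$ functions. The plan is to write the right-hand side in Fourier variables, pass the integral over $\mathbb{R}^2$ through the Fourier transform of the product, and identify the result as an integral over the hyperplane $A_k$. We fix the normalization $\hat f(\xi,\tau)=\int_{\mathbb{R}^2} f(x,t)e^{-i(x\xi+t\tau)}\,dx\,dt$. The measure on $A_k$ is understood as the pushforward of Lebesgue measure on $(\xi_1,\dots,\xi_{k-1},\tau_1,\dots,\tau_{k-1})\in\mathbb{R}^{2(k-1)}$, with $\xi_k=-\sum_{j<k}\xi_j$ and $\tau_k=-\sum_{j<k}\tau_j$. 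Any constant $(2\pi)^{-2(k-1)}$ that the normalization produces is either absorbed into this measure or is harmless for the later estimates, since only $\lesssim$ bounds are needed. Because $f_j\in S(\mathbb{R}^2)$, every integral below converges absolutely, so Fubini's theorem may be used freely.

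\textbf{Step 1.} Note that
\[
\int_{\mathbb{R}^2}\Big(\prod_{j=1}^k f_j\Big)(x,t)\,dx\,dt=\widehat{\prod_{j=1}^k f_j}(0,0).
\]

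\textbf{Step 2.} Apply the convolution theorem repeatedly: $\widehat{fg}=(2\pi)^{-2}\hat f*\hat g$. By induction on $k$, this gives
\[
\widehat{\prod_{j=1}^k f_j}(\xi,\tau)=c_k\int_{\xi_1+\dots+\xi_k=\xi,\ \tau_1+\dots+\tau_k=\tau}\ \prod_{j=1}^k\widehat{f_j}(\xi_j,\tau_j),
\]
where the integral is taken over the first $k-1$ pairs of variables. Each step of the induction convolves one more Schwartz function, and the results stay Schwartz, so Fubini's theorem justifies regrouping the iterated integrals. The hypothesis $k\ge3$ plays no role in the argument; the identity holds for every $k\ge2$.

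\textbf{Step 3.} Evaluate at $(\xi,\tau)=(0,0)$. The constraint then becomes exactly $\sum\xi_j=\sum\tau_j=0$, that is, the domain is $A_k$, and the identity follows with the stated measure.

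The only point needing care is bookkeeping rather than difficulty: the powers of $2\pi$ and the choice of measure on $A_k$ must match the convention implicitly used in \eqref{1.18}. If the paper instead takes $\widehat{f_j}$ as the inverse Fourier transform $\check f_j$, the same argument goes through with the signs of the frequencies reversed. The sign change is harmless because $A_k$ is invariant under $(\vec\xi,\vec\tau)\mapsto(-\vec\xi,-\vec\tau)$.
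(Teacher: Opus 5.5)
The paper gives no proof of Lemma~\ref{lem1.7}; it only cites the Huo--Jia papers. Your argument is the standard proof of this Plancherel-type identity and it is correct: you write $\int\prod_j f_j\,dx\,dt=\widehat{\prod_j f_j}(0,0)$, unfold the product with the iterated convolution theorem, and evaluate at the origin to land on $A_k$. Your treatment of normalization is also right. With $\hat f(\xi,\tau)=\int f\,e^{-2\pi i(x\xi+t\tau)}\,dx\,dt$ the identity holds exactly for Lebesgue measure in $(\xi_1,\dots,\xi_{k-1},\tau_1,\dots,\tau_{k-1})$. With the $e^{-i(x\xi+t\tau)}$ convention the right-hand side equals $(2\pi)^{-2(k-1)}$ times the left-hand side, which does not matter for the $\lesssim$ estimates where the lemma is used.
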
 
	One has, \eqref{1.18} is same as:
	\be\label{1.19}
	\frac{\int_{\mathbb{R}^2}{\left( F_{1-b}^{(k+1)}\prod_{j=1}^k{F_{b}^{(j)}} \right) \left( x,t \right) dxdt}}{\prod_{j=1}^{k+1}{\lVert f_j \rVert _{L_{\xi}^{2}L_{\tau}^{2}}}}.
	\ee
	Therefore, \eqref{1.18} can be achieved by Holder's inequality if we have some appropriate $L_{x}^{p}L_{t}^{q}$ estimates (the so-called Strichartz estimate) of $F_{b}^{(j)}$. With the above process, we are able to improve the previous conclusions for IVP \eqref{1.2} to $s>-\frac{1}{2}$.\par 
	As we point out in previous note, for IBVPs, some of the linear estimates will force $b\le\frac{1}{2}$, which is very different from IVPs. To overcome this kind of issue, we observe that the $L^p_xL^q_t$ estimates for SOBE, $$\widehat{F_{\rho}}\left( \xi ,\tau \right) :=\frac{f\left( \xi ,\tau \right)}{\left< \left| \tau \right|-\phi \left( \xi \right) \right> ^{\rho}}$$ (where the modified phase function $\phi$ is defined later) usually requires $\rho>\frac{1}{2}$ but we find out that one can still lower the requirement of $\rho$ by interpolation of these $L^p_xL^q_t$ estimates. Moreover, Li,Yang and Zhang\cite{li2018non,li2019wellposedness,li2020low,li2022lower,li2023lower,li2022non} developed a refined method for bilinear estimate on different dispersive equations to fullfil for various values of $b$ in $X^{s,b}$ spaces. By delicate analysis, we can use such idea to satisfy different requirement on $\rho$. Consequently, we can show the local wellposdness of IBVP \eqref{1.1} for $s>-\frac{1}{2}$.\par
	We point out that this method can be also applied for bilinear estimates. For bilinear estimates, one of the advantages of this method is that it needs less tedious discussion than $\left[ k;Z \right]$-multiplier norm method, especially for SOBE. Such application of this method will be shown in our future work. However, it seems that this approach can not directly indicate the optimal conclusion of index $s$. According to Kenig's early work on KdV equation, reader can observe that there still exists a gap between $-\frac{5}{8}$ \cite{kenig1993cauchy} and $-\frac{3}{4}$ \cite{kenig1996bilinear}. The similar situation also happens to bilinear estimate of SOBE. The problem for the reason remains open.\par
	
	This paper is organized as follows. In section \ref{section2}, we prove some preliminary estimates which play an important role in the proof of trilinear estimates. In section \ref{section3}, multilinear estimates for the IBVP on Bourgain spaces are established. Finally, we refine theory on $X^{s,b}$ space for IVP and adapt it to improve the conclusion for SOBE.
	\section{Notations and preliminary estimates}\label{section2}
	We denote,
	$$\widehat{F_{\rho}}\left( \xi ,\tau \right) :=\frac{f\left( \xi ,\tau \right)}{\left< \left| \tau \right|-\phi\left( \xi \right) \right> ^{\rho}},\quad D_{x}^{s}f:=\left( \left| \xi \right|^s\hat{f}^x\left( \xi \right) \right) ^{\lor},\quad J_{x}^{s}f:=\left( \left< \xi \right> ^s\hat{f}^x\left( \xi \right) \right) ^{\lor},$$
	$$\lVert f \rVert _{L_{x}^{p}L_{t}^{q}}:=\left( \int_{\mathbb{R}}{\left( \int_{\mathbb{R}}{\left| f\left( x,t \right) \right|^q}dt \right) ^{\frac{p}{q}}dx} \right) ^{\frac{1}{p}},\quad \lVert f \rVert _{L_{x}^{p}L_{T}^{q}}:=\left( \int_{\mathbb{R}}{\left( \int_{-T}^T{\left| f\left( x,t \right) \right|^q}dt \right) ^{\frac{p}{q}}dx} \right) ^{\frac{1}{p}},$$
	where $\land$, $\lor$ denote the Fourier transform and the inverse Fourier transform. Moreover, we introduce the modified unitary groups for SOBE:
	$$U_1\left( t \right) g:=\int_{\mathbb{R}}{e^{i\left( x\xi +t\phi\left( \xi \right) \right)}\hat{g}\left( \xi \right) d\xi},
	\quad U_2\left( t \right) g:=\int_{\mathbb{R}}{e^{i\left( x\xi -t\phi\left( \xi \right) \right)}\hat{g}\left( \xi \right) d\xi},$$
	and Fourier restriction operators:
	$$P^nf:=\int_{\left| \xi \right|\ge n}{e^{ix\xi}\hat{f}\left( \xi \right) d\xi},\quad P_nf:=\int_{\left| \xi \right|\le n}{e^{ix\xi}\hat{f}\left( \xi \right) d\xi},$$
	which are used to decompose a function $f$ as $f=P^nf+P_nf$ by frequency.
	\subsection{Modified phase function}
	We introduce a function $\left| \xi \right|^3+\frac{k}{2}\left| \xi \right|$ which has the following equivalence relation in
	the related Bourgain type space (for the proof, reader can refer \cite{esfahani2012local}):
	$$\lVert u \rVert _{X^{s,b}\left( \mathbb{R}^2 \right)}\sim \lVert \left< \xi \right> ^s\left< \left| \tau \right|-\left| \xi \right|^3-\frac{k}{2}\left| \xi \right| \right> ^b\widehat{u}\left( \xi ,\tau \right) \rVert _{L_{\xi ,\tau}^{2}\left( \mathbb{R}^2 \right)}.$$ This modified phase function helps us to estimate the resonance function(see \textbf{Lemma} \ref{lem5.1}) and has close relation for the phase function of KdV equation. For simplicity, we only present the proof for $k=-1$ since the case when $k=1$ can be addressed similarly. In the rest of this acticle, we denote $\phi \left( \xi \right) :=\left| \xi \right|^3-\frac{1}{2}\left| \xi \right|$ and introduce, 
	$$\phi _1\left( \xi \right):=\left\{ \begin{array}{l}
	\phi(\xi) ,\quad \  \xi >0,\\
	-\phi(\xi) ,\ \ \xi \le 0.\\
	\end{array} \right.$$ 
	One can notice that if $\left| \xi \right|\ge \frac{1}{2}$, then $\phi _{1}^{\prime}=3\xi ^2-\frac{1}{2}\ge \xi ^2>0$, thus, if $\left| \xi \right|\ge \frac{1}{2}$, then function $\phi_1$ is invertible, which will be very useful for our later study. Moreover, it can be verified that $\phi \left( \xi \right),$ $-\phi \left( \xi \right)\in \mathcal{A}$, which is a function class introduced in \cite{kenig1991oscillatory} by Kenig. Namely, $\phi$ satisfies:
	\begin{itemize}
		\item $\phi \in C^3\left( \mathbb{R}\setminus \left\{ 0 \right\} \right)$,
		\item the set $S_{\phi }:=\left\{ \xi \in \mathbb{R}\cup \left\{ \pm \infty \right\} ;\phi ^{\prime\prime}\left( \xi \right) =0\ or\  \underset{\xi \rightarrow \widetilde{\xi }}{\lim}\phi ^{\prime\prime}\left( \widetilde{\xi } \right)  =\phi ^{\prime\prime}\left( \xi \right) =\pm \infty \right\}=\left\{0,\pm \infty \right\}$ is finite,
		\item there exists constants $C_1,C_2,\epsilon_1>0$ such that for $\left| \xi \right|\le \epsilon_1 $, $$C_1\left| \xi \right|\le \phi ^{\prime\prime}\left( \xi \right)=6\left| \xi \right|\le C_2\left| \xi \right|,$$
		\item there exists constants $C_3,C_4,\epsilon_2>0$ such that for $\left| \xi \right|>\frac{1}{\epsilon _2}  $, $$C_3\left| \xi \right|\le \phi^{\prime\prime}\left( \xi \right)=6\left| \xi \right|\le C_4\left| \xi \right|,$$
		\item $\phi^{\prime\prime}\left( \xi \right)$ has a finite number of changes of monotonicity since $\phi^{\prime\prime}\left( \xi \right)=6\left| \xi \right|,$
	\end{itemize}
	and so does $-\phi$. Thus, the following lemma in \cite{kenig1991oscillatory} can be applied.
	\begin{lem}\label{lem1.8}
		In the domain $\Omega\subseteq \mathbb{R}\setminus \left\{ 0 \right\}$, for $h=\phi \left( \xi \right)$ or $-\phi \left( \xi \right)$, then,
		$$\lVert I\left( x,t \right) \rVert _{L_{x}^{4}L_{t}^{\infty}}\le c_h\left( \int_{\varOmega}{\left| \hat{f}\left( \xi \right) \right|^2\left| \frac{h^{\prime}\left( \xi \right)}{h^{\prime\prime}\left( \xi \right)} \right|^{\frac{1}{2}}d\xi} \right) ^{\frac{1}{2}},$$
		where $I\left( x,t \right) :=\int_{\mathbb{R}}{e^{ith \left( \xi \right) +ix\xi}\widehat{f}\left( \xi \right) d\xi},$ and $c_h$ is a constant only depends on $h$.
	\end{lem}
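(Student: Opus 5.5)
The plan is the standard $TT^{*}$ argument of Kenig, Ponce and Vega for maximal-function estimates; the key input is a uniform dispersive bound on an oscillatory kernel. Set $a(\xi):=\left|h^{\prime\prime}(\xi)/h^{\prime}(\xi)\right|^{\frac14}\chi_{\Omega}(\xi)$, $g:=\hat f\,a^{-1}$, and
$$Tg(x,t):=\int_{\mathbb{R}}e^{i(th(\xi)+x\xi)}a(\xi)g(\xi)\,d\xi .$$
Then $I=Tg$ and $\|g\|_{L^2}^2=\int_\Omega|\hat f|^2\left|h^{\prime}/h^{\prime\prime}\right|^{\frac12}d\xi$. So Lemma \ref{lem1.8} is equivalent to the boundedness of $T:L^2_\xi\to L^4_xL^\infty_t$. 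By duality this is equivalent to the boundedness of $TT^{*}:L^{4/3}_xL^{1}_t\to L^4_xL^\infty_t$. The operator $TT^{*}$ is convolution in $(x,t)$ with the kernel
$$K(x,t)=\int_\Omega e^{i(th(\xi)+x\xi)}\left|\frac{h^{\prime\prime}(\xi)}{h^{\prime}(\xi)}\right|^{\frac12}d\xi .$$
It therefore suffices to prove
$$\sup_{t\in\mathbb{R}}|K(x,t)|\le C_h|x|^{-\frac12}.$$
Given this bound, we first take the $L^\infty_t$ norm inside the convolution, which reduces matters to convolution in $x$ alone with $|x|^{-1/2}$ against $\|F(y,\cdot)\|_{L^1_t}$. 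The Hardy--Littlewood--Sobolev inequality (fractional integration of order $\frac12$ in one dimension) maps $L^{4/3}_x\to L^4_x$, which closes the argument. To avoid convergence issues, we first work with $f\in S(\mathbb{R})$ and with $\Omega$ truncated to a compact set away from $0$ and $\pm\infty$, and we remove the truncation by monotone convergence at the end. The constants must stay uniform in the truncation.

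For the kernel bound, the class $\mathcal{A}$ hypotheses let us split $\Omega$ into finitely many intervals, the number depending only on $h$. On each interval, $h^{\prime}$ and $h^{\prime\prime}$ have a fixed sign and $h^{\prime\prime}$ is monotone. Moreover the amplitude $|h^{\prime\prime}/h^{\prime}|^{1/2}$ has bounded variation relative to its size; this uses the power-type behaviour $h^{\prime\prime}\sim|\xi|$ near $0$ and near $\infty$. For our $h=\pm\phi$ one has $h^{\prime\prime}=\pm6|\xi|$ and $h^{\prime}=\pm(3\xi^2-\frac12)$ up to sign, so all of this is explicit. Fix $(x,t)$ and write $\psi(\xi)=th(\xi)+x\xi$. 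On each interval we split further into two regions.
\begin{itemize}
\item[(i)] The region where $|t h^{\prime}(\xi)|\le|x|/2$ or $|th^{\prime}(\xi)|\ge2|x|$. There $|\psi^{\prime}|\gtrsim|x|+|th^{\prime}|$, and we integrate by parts once (van der Corput with first derivative, using monotonicity of $\psi^{\prime}$). This gives a bound of the form $\sup|\text{amplitude}|/\inf|\psi^{\prime}|$, which must then be checked to be $\lesssim|x|^{-1/2}$.
\item[(ii)] The region where $|th^{\prime}(\xi)|\sim|x|$, which contains the possible stationary point $\xi_0$ with $th^{\prime}(\xi_0)=-x$. Here we apply van der Corput with the second derivative, $|\psi^{\prime\prime}|=|t h^{\prime\prime}|$. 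Since $h^{\prime\prime}$ is comparable to its value over this region, we get
$$|K|\lesssim |th^{\prime\prime}|^{-\frac12}\left|\frac{h^{\prime\prime}}{h^{\prime}}\right|^{\frac12}=|t h^{\prime}|^{-\frac12}\sim|x|^{-\frac12}.$$
This computation is exactly why the weight $|h^{\prime}/h^{\prime\prime}|^{1/2}$ appears in the lemma.
\end{itemize}

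The main obstacle is making the region (i) estimate uniform in $t$. When $|th^{\prime}|\gg|x|$, the amplitude $|h^{\prime\prime}/h^{\prime}|^{1/2}$ divided by $|t h^{\prime}|$ must still be controlled by $|x|^{-1/2}$. I would handle this by a dyadic decomposition in the size of $|th^{\prime}(\xi)|$, applying van der Corput with the second derivative on each dyadic block. Each block then contributes $\lesssim|th^{\prime}|^{-1/2}$ on that block, and the blocks with $|th^{\prime}|\ge|x|$ sum geometrically to $\lesssim|x|^{-1/2}$. The blocks with $|th^{\prime}|\ll |x|$ are handled by first-derivative integration by parts, which contributes $\lesssim|x|^{-1}\sup|\text{amplitude}|$. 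Near $\xi=0$ the amplitude behaves like $|\xi|^{1/2}|h^{\prime}(0)|^{-1/2}$ and stays bounded, but this crude bound only gives $|x|^{-1}$, which is not $|x|^{-1/2}$ for small $|x|$. For $|x|\lesssim1$ I would instead use the length of the region together with the integrability of $|h^{\prime\prime}/h^{\prime}|^{1/2}$ there. Checking that these pieces always combine into a bound of the form $C_h|x|^{-1/2}$, with $C_h$ depending only on the finitely many monotonicity intervals and the constants $C_1,\dots,C_4,\epsilon_1,\epsilon_2$, is the delicate bookkeeping step.
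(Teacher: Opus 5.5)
Your architecture is the standard one. The paper gives no proof of this lemma; it imports it from Kenig--Ponce--Vega, whose argument is exactly $TT^{*}$, a uniform bound $\sup_t|K(x,t)|\lesssim|x|^{-1/2}$, and one-dimensional fractional integration $L^{4/3}\to L^{4}$.

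\textbf{The gap is in the kernel bound}, which is the whole content of the lemma. You assert that the amplitude $A=|h''/h'|^{1/2}$ is bounded, with variation comparable to its size, once the points singled out by the class $\mathcal{A}$ are split off. You name $0$ and $\pm\infty$ as the only delicate points. But the $\mathcal{A}$ conditions only control $h''$, and the weight also involves $h'$. Here $h'(\xi)=\pm(3\xi^{2}-\frac12)$ for $\xi>0$ (symmetrically for $\xi<0$), which vanishes at $\xi_{*}=\pm1/\sqrt{6}$. There $A\sim|\xi-\xi_{*}|^{-1/2}$ is unbounded.

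Since $th'\to0$ at $\xi_*$, a neighbourhood of $\xi_*$ always lies in your region (i), or in your blocks with $|th'|\ll|x|$. On that neighbourhood your bound $\sup|A|/\inf|\psi'|$ is infinite. You cannot dodge these points for a general $\Omega$. Your claim that $\Omega$ splits into finitely many intervals is false for an arbitrary $\Omega$. The natural repair is to prove the estimate for $\Omega=\mathbb{R}\setminus\{0\}$, which suffices because $\hat f$ vanishes off $\Omega$, and that puts $\pm1/\sqrt{6}$ into the kernel. In the paper's only application, $\Omega=\{|\xi|\ge n\}$ with $n>\frac12>1/\sqrt{6}$, so the issue does not arise there.

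\textbf{The fix: cut at the scale set by $x$ and bound the small piece trivially.}
\begin{itemize}
\item \emph{Near $\xi_{*}$.} On $|\xi-\xi_{*}|\le|x|^{-1}$, use $\int_{|\eta|\le|x|^{-1}}|\eta|^{-1/2}\,d\eta\lesssim|x|^{-1/2}$. On the rest of region (i), $A\lesssim|x|^{1/2}$ and is monotone, so the first-derivative estimate gives $|x|^{1/2}\cdot|x|^{-1}$. In region (ii) and the dyadic blocks near $\xi_*$, $h''\approx\sqrt{6}$ is essentially constant and your second-derivative computation goes through.
\item \emph{Small $|x|$, large $|\xi|$.} For $|x|\lesssim1$ and small $|t|$, region (i) extends to $|\xi|\sim(|x|/|t|)^{1/2}$, where $A\sim|\xi|^{-1/2}$ is not integrable. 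Neither $|x|^{-1}\sup|A|$ nor ``length times integrability'' works. Split at $|\xi|\sim|x|^{-1}$ instead: below it, $\int_{1}^{|x|^{-1}}\xi^{-1/2}\,d\xi\lesssim|x|^{-1/2}$; above it, use $A\lesssim|x|^{1/2}$ with the first-derivative bound.
\item \emph{Near $\xi=0$ in region (ii).} When $|x/t|$ is comparable to $\frac12=|h'(0^{\pm})|$, region (ii) contains a neighbourhood of $\xi=0$. There $h''=6|\xi|$ is not comparable to a constant, and the stationary point $\xi_0$ can sit arbitrarily close to $0$, so your claim that $h''$ is comparable over region (ii) fails. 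Instead:
  \begin{itemize}
  \item bound $|\xi|\lesssim|t|^{-1/3}$ trivially, giving $|t|^{-1/3}\cdot|t|^{-1/6}$;
  \item when $\xi_0\gg|t|^{-1/3}$, use the second-derivative estimate on $|\xi|\sim\xi_0$, giving $(|t|\xi_0)^{-1/2}\xi_0^{1/2}=|t|^{-1/2}$;
  \item use first-derivative estimates on the remaining pieces; these sum geometrically to $\lesssim|t|^{-1}\max(\xi_0,|t|^{-1/3})^{-3/2}\le|t|^{-1/2}$.
  \end{itemize}
  Since $|t|\sim|x|$ in this regime, all of this is $\lesssim|x|^{-1/2}$.
\end{itemize}
With these three cuts your outline becomes a complete proof.
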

	\subsection{Theorems for interpolation}
	We persent the interpolation theorems which are frequently used through the paper.
	\begin{lem}(\textbf{Riesz-Thorin Interpolation})\label{lem2.1}
		Let $\left( X,\mathcal{M},\mu \right)$, $\left( Y,\mathcal{N},\nu \right)$ be positive, $\sigma$-finite measure spaces. Assume $1\le p_j,q_j\le \infty$ and constants $M_j\ge0$, $j=1,2$. Suppose that we are given a linear mapping $T$,
		$$T:\left( L^{p_0}\left( \mu \right) +L^{p_1}\left( \mu \right) \right) \longrightarrow \left( L^{q_0}\left( \nu \right) +L^{q_1}\left( \nu \right) \right)$$
		with,
		$$\lVert Tf \rVert _{q_j}\le M_j\lVert f \rVert _{p_j},$$
		for any $f\in L^{p_j}\left( \mu \right)$. Then, for any $\theta\in\left[0,1\right]$,
		$$\lVert Tf \rVert _{q_\theta}\le M_\theta\lVert f \rVert _{p_\theta},$$
		where $\frac{1}{p_{\theta}}=\frac{1-\theta}{p_0}+\frac{\theta}{p_1}$, $\frac{1}{q_{\theta}}=\frac{1-\theta}{q_0}+\frac{\theta}{q_1}$, $M_{\theta}=M_{0}^{1-\theta}M_{1}^{\theta}.$
	\end{lem}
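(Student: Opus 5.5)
We will prove Lemma \ref{lem2.1} by the classical complex-interpolation argument based on Hadamard's three lines lemma, working with complex-valued $L^p$ spaces. We first dispose of trivial cases. If $M_0=0$ or $M_1=0$ the argument below still goes through after replacing $M_j$ by $M_j+\varepsilon$ and letting $\varepsilon\to0$. The endpoints $\theta\in\{0,1\}$ are exactly the hypotheses. So we fix $\theta\in(0,1)$. If $p_\theta=\infty$ then $p_0=p_1=\infty$, a case we treat separately at the end. Hence we may assume $p_\theta<\infty$.

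\textbf{Step 1 (reduction to simple functions and duality).} Let $f$ be a simple function with $\mu$-finite support and $\lVert f\rVert_{p_\theta}=1$. Since $\nu$ is $\sigma$-finite, the norm of $Tf$ can be computed by duality:
$$\lVert Tf\rVert_{q_\theta}=\sup\Big|\int Tf\,g\,d\nu\Big|,$$
where the supremum runs over simple $g$ with $\nu$-finite support and $\lVert g\rVert_{q_\theta'}=1$. This includes the case $q_\theta=\infty$, where $q'_\theta=1$.

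\textbf{Step 2 (analytic family).} Write $f=\sum_k a_k e^{i\alpha_k}\chi_{E_k}$ and $g=\sum_l b_l e^{i\beta_l}\chi_{F_l}$ with $a_k,b_l>0$. For $z$ in the closed strip $S=\{0\le \mathrm{Re}\,z\le1\}$ define
$$f_z=\sum_k a_k^{P(z)}e^{i\alpha_k}\chi_{E_k},\qquad P(z)=p_\theta\Big(\frac{1-z}{p_0}+\frac{z}{p_1}\Big),$$
$$g_z=\sum_l b_l^{Q(z)}e^{i\beta_l}\chi_{F_l},\qquad Q(z)=q_\theta'\Big(\frac{1-z}{q_0'}+\frac{z}{q_1'}\Big),$$
with the convention that $Q\equiv1$ if $q_\theta'=\infty$. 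Then $f_\theta=f$ and $g_\theta=g$. By linearity of $T$,
$$F(z):=\int Tf_z\,g_z\,d\nu=\sum_{k,l}a_k^{P(z)}b_l^{Q(z)}e^{i(\alpha_k+\beta_l)}\int T(\chi_{E_k})\chi_{F_l}\,d\nu .$$
This is a finite combination of exponentials $e^{cz}$, so $F$ is entire and bounded on $S$. On $\mathrm{Re}\,z=0$ we have $|f_{it}|=|f|^{p_\theta/p_0}$, so $\lVert f_{it}\rVert_{p_0}=1$, and similarly $\lVert g_{it}\rVert_{q_0'}=1$. Hölder and the hypothesis then give $|F(it)|\le M_0$. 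Likewise $|F(1+it)|\le M_1$.

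\textbf{Step 3 (three lines and extension).} Hadamard's three lines lemma gives $|F(\theta)|\le M_0^{1-\theta}M_1^{\theta}$, i.e.
$$\Big|\int Tf\,g\,d\nu\Big|\le M_\theta .$$
Taking the supremum over $g$ yields $\lVert Tf\rVert_{q_\theta}\le M_\theta\lVert f\rVert_{p_\theta}$ for simple $f$.

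The step I expect to need the most care is extending this bound from simple functions to all $f\in L^{p_\theta}$, since one only knows a priori that $T$ is defined on $L^{p_0}+L^{p_1}$. For this, split $f=f\chi_{|f|>1}+f\chi_{|f|\le1}$. Since $p_\theta$ lies between $p_0$ and $p_1$, one piece lies in $L^{p_0}$ and the other in $L^{p_1}$ (say $p_0\le p_\theta\le p_1$, so the large part is in $L^{p_0}$ and the small part in $L^{p_1}$). Approximate each piece by simple functions $f_n$ with $|f_n|\le|f|$, converging both in $L^{p_\theta}$ and in the corresponding $L^{p_j}$. By the hypotheses, $Tf_n\to Tf$ in $L^{q_0}+L^{q_1}$. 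Passing to a subsequence, $Tf_n\to Tf$ $\nu$-a.e., and Fatou's lemma gives the bound for $f$.

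Finally, the case $p_0=p_1=\infty$ follows from Hölder in the target variable: for $h\in L^{q_0}\cap L^{q_1}$,
$$\lVert h\rVert_{q_\theta}\le\lVert h\rVert_{q_0}^{1-\theta}\lVert h\rVert_{q_1}^{\theta},$$
applied to $h=Tf$.
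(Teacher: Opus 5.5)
Your argument is correct. It is the classical Thorin proof via Hadamard's three lines lemma: simple functions and duality, the analytic family $f_z,g_z$, then density plus Fatou, with the case $p_0=p_1=\infty$ handled by log-convexity of $L^q$ norms. The paper gives no proof of its own and simply defers to this standard argument in the cited references; working over complex scalars, as you do, is the right reading, since for real-valued $L^p$ spaces the constant $M_0^{1-\theta}M_1^{\theta}$ is not guaranteed in general.
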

	For details of \textbf{Lemma} \ref{lem2.1}, readers can refer \cite{liriesz,lofstrom1976interpolation,stein1956interpolation}. Next, we will present the interpolation theorem for the mixed norm spaces $L^{\vec{p}}$ with the norm:
	$$\lVert f \rVert _{L^{\vec{p}}}:=\lVert f\left( x_1,...,x_n \right) \rVert _{L_{x_1}^{p_1}...L_{x_n}^{p_n}},$$
	where $\vec{p}=(p_1,...,p_n)$, $1\le p_j\le \infty$ and for the linear operator with index. This interpolation theorem comes from \cite{benedek1961space} and actually, is a generalization of Stein interpolation theorem.
	\begin{lem}\label{lem2.2}
		Suppose that $\left( X_j,M_j,\mu_j \right)$, $\left( Y_j,N_j,\nu_j \right)$ are totally $\sigma$-finite measure spaces, $\left( X,M,\mu \right) =\left( \prod_{j=1}^n{X_j},\prod_{j=1}^n{M_j},\prod_{j=1}^n{\mu _j} \right)$, $\left( Y,N,\nu \right) =\left( \prod_{j=1}^n{Y_j},\prod_{j=1}^n{N_j},\prod_{j=1}^n{\nu _j} \right)$.
		Let ${\left\{ T_{z} \right\}}$ be a linear operator family of admissible growth\footnote{For the detailed definition of admissible growth and the following $H(M)$, $w$, reads can refer Theorem 1 in \cite{benedek1961space}.}, $z$ is a complex number, $1\le \vec{p}_j,\vec{q}_j\le\infty$, $0<\theta<1$. Assume that for $f\in H\left(M\right)$ and
		$$\lVert T_{i\theta}f \rVert _{L^{\vec{q}_0}}\le A_0\left( y \right) \lVert f \rVert _{L^{\vec{p}_0}},\quad \lVert T_{1+i\theta}f \rVert _{L^{\vec{q}_1}}\le A_1\left( y \right) \lVert f \rVert _{L^{\vec{p}_1}}$$
		with
		$$\log \left| A_j\left( y \right) \right|\le Ae^{a\left| y \right|},\ a<\pi ,\ j=0,1.$$
		Then,
		$$\lVert T_t\left( f \right) \rVert _{L^{\vec{q}_{\theta}}}\le A_{\theta}\lVert f \rVert _{L^{\vec{p}_{\theta}}},$$
		where
		$$\frac{1}{\vec{p}_{\theta}}=\frac{1-\theta}{\vec{p}_0}+\frac{\theta}{\vec{p}_1},\quad \frac{1}{\vec{q}_{\theta}}=\frac{1-\theta}{\vec{p}_0}+\frac{\theta}{\vec{q}_1},$$
		$$\log \left| A_\theta \right|\le \int_{\mathbb{R}}{w\left( 1-\theta,y \right) \log A_0\left( y \right) dy}+\int_{\mathbb{R}}{w\left( \theta,y \right) \log A_1\left( y \right) dy}.$$
	\end{lem}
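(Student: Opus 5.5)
We will prove Lemma \ref{lem2.2} by adapting Stein's proof of complex interpolation: the three lines lemma, in Hirschman's sharp form, combined with duality. The only new ingredient for mixed norms is the construction of analytic families of test functions that are normalized in iterated norms. Throughout, we read the definition of $\vec{q}_\theta$ as $\frac{1}{\vec{q}_{\theta}}=\frac{1-\theta}{\vec{q}_0}+\frac{\theta}{\vec{q}_1}$, since the $\vec{p}_0$ there is evidently a misprint. We write the operator at the interpolation point as $T_\theta$.

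\textbf{Step 1: reduction to a bilinear form.} By density, it suffices to treat simple functions $f\in H(M)$ with $\lVert f\rVert_{L^{\vec{p}_\theta}}=1$. For the target side we use the duality of mixed norm spaces over $\sigma$-finite products, $(L^{\vec{q}})^{*}=L^{\vec{q}\,'}$ when all $q_j<\infty$, with the usual modification otherwise. This gives
\[
\lVert T_\theta f\rVert_{L^{\vec{q}_\theta}}=\sup\Big|\int_Y T_\theta f\, g\, d\nu\Big|,
\]
where the supremum runs over simple $g$ with $\lVert g\rVert_{L^{\vec{q}_\theta\,'}}=1$. The duality itself is proved by iterating the one-variable Hölder inequality and its equality case, one variable at a time.

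\textbf{Step 2: analytic families adapted to mixed norms.} This is the main obstacle. For $\vec{p}=(p_1,\dots,p_n)$, define the partial norms recursively: $F_n:=|f|$ and
\[
F_{j-1}(x_1,\dots,x_{j-1}):=\lVert F_j(x_1,\dots,x_{j-1},\cdot)\rVert_{L^{p_{\theta,j}}_{x_j}} .
\]
Then set
\[
f_z:=\operatorname{sgn} f\cdot |f|^{p_{\theta,n}/p_n(z)}\prod_{j=1}^{n-1}F_j^{\,p_{\theta,j}/p_j(z)-p_{\theta,j+1}/p_{j+1}(z)},
\qquad \frac{1}{p_j(z)}:=\frac{1-z}{p_{0,j}}+\frac{z}{p_{1,j}} .
\]
By construction $f_\theta=f$. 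We then verify, by induction on $n$ and integrating out $x_n, x_{n-1},\dots$ successively, that $\lVert f_{iy}\rVert_{L^{\vec{p}_0}}=\lVert f_{1+iy}\rVert_{L^{\vec{p}_1}}=1$. At each stage the exponents telescope, so the inner norm of the current factor reproduces the next partial norm $F_{j-1}$ raised to the right power. The dual family $g_z$ is built in the same way from $\vec{q}_0\,',\vec{q}_1\,'$. The delicate points are that some $p_{j}$ may equal $\infty$, and that $F_j$ may vanish on sets of positive measure. We handle these by the convention $0^{w}=0$ on the zero set, by treating infinite exponents as limits, and by working only with simple functions, so that every $F_j$ takes finitely many values.

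\textbf{Step 3: three lines argument.} Put
\[
\Phi(z):=\int_Y T_z(f_z)\,g_z\,d\nu .
\]
Because $f$ and $g$ are simple, $f_z$ and $g_z$ are finite sums of analytic exponentials times characteristic functions. The admissible growth hypothesis on $\{T_z\}$ then makes $\Phi$ analytic in the open strip $0<\operatorname{Re}z<1$, continuous on its closure, and satisfying $\log|\Phi(x+iy)|\le Ce^{a|y|}$ with $a<\pi$. On the boundary lines, Hölder's inequality in mixed norms together with the hypotheses and Step 2 give $|\Phi(iy)|\le A_0(y)$ and $|\Phi(1+iy)|\le A_1(y)$. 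Hirschman's lemma — the Poisson-kernel form of the three lines lemma, valid under exactly this double-exponential growth restriction — then yields
\[
\log|\Phi(\theta)|\le\int_{\mathbb{R}} w(1-\theta,y)\log A_0(y)\,dy+\int_{\mathbb{R}} w(\theta,y)\log A_1(y)\,dy=:\log A_\theta .
\]
Since $\Phi(\theta)=\int_Y T_\theta f\,g\,d\nu$, taking the supremum over $g$ as in Step 1 gives the claimed bound for simple $f$. It extends to all of $H(M)$ by density and the linearity of $T_\theta$.
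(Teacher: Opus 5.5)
The paper does not prove this lemma; it quotes it from Benedek--Panzone, so there is no in-paper argument to compare with. Your route is the standard one for such a statement: Stein's analytic-family argument, closed by Hirschman's lemma, with $f_z,g_z$ built from iterated partial norms. The telescoping in Step 2 is correct.

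However, one claim that Step 3 depends on is false. Restricting to simple functions does \emph{not} make the $F_j$ finitely valued. On $(0,1)^2$ take $f=\chi_T$ with $T=\{0<x_2<x_1<1\}$ and $p_{\theta,2}<\infty$. Then $F_1(x_1)=x_1^{1/p_{\theta,2}}$, and
\[
f_z=\chi_T(x_1,x_2)\,x_1^{\frac{1}{p_{\theta,2}}\left(\frac{p_{\theta,1}}{p_1(z)}-\frac{p_{\theta,2}}{p_2(z)}\right)},
\]
which is not a simple function in general. Consequently you cannot apply to $f_z$, $f_{iy}$, $f_{1+iy}$ either of the two hypotheses you need:
\begin{enumerate}
\item the admissible-growth hypothesis, which is your only source of analyticity and growth control for $\Phi$;
\item the boundary bounds, which are assumed only for $f\in H(M)$.
\end{enumerate}
The fix is to work throughout with finite linear combinations of indicators of measurable rectangles $E_1\times\cdots\times E_n$ with $\mu_j(E_j)<\infty$. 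This is the natural reading of $H(M)$ with $M=\prod_j M_j$. Refine the finitely many sides to a product grid. Then $f$ is constant on grid cells, and each $F_j$ is constant on the projected cells. So $f_z$ is again of this form, with coefficients that are fixed unimodular constants times exponentials of affine functions of $z$, hence bounded on the closed strip.

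The same restriction has to be imposed on $g$, and this is not free in Step 1. You need $L^{\vec{q}_\theta}$ to be normed by \emph{rectangle}-simple $g$ with $\lVert g\rVert_{L^{\vec{q}_\theta^{\,\prime}}}\le 1$. The equality case of the iterated H\"older inequality gives extremizers built from $h=T_\theta f$ itself. These are not of that form, and they do not exist when an inner exponent is $\infty$, since the supremum is then not attained. So an additional approximation argument is required. This is where $\sigma$-finiteness, and the approximation in measure of product-measurable sets by finite unions of rectangles, actually enter.

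Finally, drop the two appeals to density. The hypotheses and the conclusion are only asserted for $f\in H(M)$. Moreover, when some component of $\vec{p}_\theta$ equals $\infty$, rectangle-simple functions are not dense in $L^{\vec{p}_\theta}$ anyway.
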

	\subsection{Estimates for $U_j(t)$}
	In the rest of the paper, we will fix $n>\frac{1}{2}$. 
	\begin{lem}\label{lem2.3}
		The group $\left\{ U_j\left( t \right) \right\} _{-\infty}^{+\infty}$, $j=1,2$, satisfies:
		\be\label{2.1}
		\lVert D_xP^{n}U_j\left( t \right) g \rVert _{L_{x}^{\infty}L_{t}^{2}} \lesssim \lVert g \rVert _{L^2},
		\ee
		\be\label{2.2}
		\lVert D_{x}^{-\frac{1}{4}}P^{n}U_j\left( t \right) g \rVert _{L_{x}^{4}L_{t}^{\infty}} \lesssim \lVert g \rVert _{L^2},
		\ee
	\end{lem}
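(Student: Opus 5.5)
We treat $U_1$; the case of $U_2$ is identical after replacing $\phi$ by $-\phi$. Write $\widehat{g}(\xi)$ and
$$D_xP^nU_1(t)g(x)=\int_{|\xi|\ge n}e^{i(x\xi+t\phi(\xi))}|\xi|\,\widehat{g}(\xi)\,d\xi .$$
Both estimates rest on the same observation. On $|\xi|\ge n>\frac12$ the map $\xi\mapsto\phi(\xi)=|\xi|^3-\frac12|\xi|$ satisfies $|\phi'(\xi)|=3\xi^2-\frac12\ge\xi^2$. It is strictly monotone on each of the half-lines $\xi\ge n$ and $\xi\le-n$. On each half-line it is therefore a valid change of variables.

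\textbf{Proof of \eqref{2.1}.} Split the integral into the pieces $\xi\ge n$ and $\xi\le -n$ and treat each separately. On $\xi\ge n$ substitute $\eta=\phi(\xi)$, so that $d\xi=d\eta/\phi'(\xi(\eta))$. This gives
$$\int_{\eta\ge\phi(n)}e^{it\eta}\,e^{ix\xi(\eta)}\,\frac{|\xi(\eta)|\,\widehat g(\xi(\eta))}{\phi'(\xi(\eta))}\,d\eta .$$
For fixed $x$ this is a one-dimensional Fourier transform in $\eta$. Plancherel in $t$ then yields
$$\int_{\mathbb R}|\cdot|^2\,dt\lesssim\int_{\eta\ge\phi(n)}\frac{|\xi|^2|\widehat g(\xi)|^2}{\phi'(\xi)^2}\,d\eta=\int_{\xi\ge n}\frac{\xi^2}{\phi'(\xi)}|\widehat g(\xi)|^2\,d\xi .$$
This bound is uniform in $x$, because the factor $e^{ix\xi}$ has modulus one. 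Since $\phi'(\xi)\ge\xi^2$ on $\xi\ge n$, the weight $\xi^2/\phi'(\xi)$ is bounded by $1$, and the piece is $\lesssim\|g\|_{L^2}^2$. The piece $\xi\le-n$ is handled the same way. Taking the supremum in $x$ gives \eqref{2.1}. This is the classical Kato smoothing argument.

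\textbf{Proof of \eqref{2.2}.} Apply Lemma \ref{lem1.8} with $h=\phi$, on the domain $\Omega=\{|\xi|\ge n\}\subset\mathbb R\setminus\{0\}$, to the function whose Fourier transform is $|\xi|^{-1/4}\chi_{|\xi|\ge n}\widehat g(\xi)$. This gives
$$\|D_x^{-1/4}P^nU_1(t)g\|_{L^4_xL^\infty_t}\le c_\phi\left(\int_{|\xi|\ge n}|\xi|^{-1/2}|\widehat g(\xi)|^2\left|\frac{\phi'(\xi)}{\phi''(\xi)}\right|^{1/2}d\xi\right)^{1/2}.$$
On $|\xi|\ge n$ we have $\phi'(\xi)\sim\xi^2$ and $\phi''(\xi)=6|\xi|$. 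Hence $|\phi'/\phi''|^{1/2}\sim|\xi|^{1/2}$, which exactly cancels the factor $|\xi|^{-1/2}$. The right-hand side is therefore $\lesssim\|g\|_{L^2}$.

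The argument for $U_2$ is the same with $h=-\phi$, which is admissible since $-\phi\in\mathcal A$ as noted before Lemma \ref{lem1.8}.

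The only genuinely delicate point is the low-frequency region. The cutoff $P^n$ with $n>\frac12$ is exactly what keeps $\phi'$ away from zero, and this is needed both for the change of variables in \eqref{2.1} and for the bound on the weight in \eqref{2.2}. Without the cutoff, $\phi'$ vanishes near $|\xi|=1/\sqrt6$ and both bounds would fail. With it, everything reduces to Plancherel and Lemma \ref{lem1.8}.
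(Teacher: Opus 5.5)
Your proof is correct and follows essentially the paper's argument: for \eqref{2.1}, the change of variables $\eta=\phi(\xi)$ on each half-line $\pm\xi\ge n$ followed by Plancherel in $t$; for \eqref{2.2}, Lemma \ref{lem1.8} with $|\phi'/\phi''|^{1/2}\lesssim|\xi|^{1/2}$ on $|\xi|\ge n$. The paper bounds $\|P^nU_jg\|_{L^4_xL^\infty_t}$ by $\|P^ng\|_{H^{1/4}}$ and then applies this to $D_x^{-1/4}g$, which is the same thing. One small correction to your closing remark: in \eqref{2.2} the cutoff is needed to exclude $\xi\to0$, where $|\phi'/\phi''|\sim|\xi|^{-1}$, and not because $\phi'$ vanishes at $|\xi|=1/\sqrt6$ (the weight actually vanishes there).
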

	\begin{proof}
		Firstly, as mentioned before, $\eta=\phi_1\left(\xi\right)$ is invertible when $\left| \xi \right|\ge n>\frac{1}{2}$, that is, $\xi =\phi _{1}^{-1}\left(\eta\right):=\eta^{-1}$, then,
		\begin{equation*}
		\begin{aligned}
		P^{n}U_1\left( t \right) g&=\int_n^{+\infty}{e^{i\left( x\xi +t\eta \right)}\hat{g}\left( \xi \right) d\xi}+\int_{-\infty}^{-n}{e^{i\left( x\xi -t\eta \right)}\hat{g}\left( \xi \right) d\xi}\\
		&=\int_{\phi_1(n)}^{+\infty}{e^{i\left( x\eta^{-1}+t\eta \right)}\hat{g}\left( \eta^{-1} \right) \frac{d\eta}{\eta^{\prime}}}+\int_{-\infty}^{\phi_2(-n)}e^{i\left( x\eta^{-1}-t\eta \right)}\hat{g}\left( \eta^{-1} \right) \frac{d\eta}{\eta^{\prime}}\\
		&:=\check{h}_{1}^{\eta}+\hat{h}_{2}^{\eta}
		\end{aligned}
		\end{equation*}
		where $h_1=\chi_{\eta\ge \phi_1(n)}e^{ix\eta^{-1}}\hat{g}\left( \eta^{-1} \right) \frac{1}{\eta^{\prime}}$, $h_2=\chi_{\eta\le \phi_1(-n)}e^{ix\eta^{-1}}\hat{g}\left( \eta^{-1} \right) \frac{1}{\eta^{\prime}}$. Moreover, one has,
		\begin{equation*}
		\begin{aligned}
		\lVert h_1 \rVert _{L_{\eta}^{2}}^{2}&=\int_{\mathbb{R}}{\left| \chi_{\eta\ge \phi_1(n)}e^{ix\eta^{-1}}\hat{g}\left( \eta^{-1} \right) \frac{1}{\eta^{\prime}} \right|}^2d\eta _2\\
		&\lesssim \int_{\phi_1(n)}^{+\infty}\left| \hat{g}\left( \eta^{-1} \right) \frac{1}{\eta^{\prime}} \right|^2d\eta\\
		&=\int_{n}^{+\infty}{\left| \hat{g}\left( \xi \right) \frac{1}{\eta^{\prime}\left( \xi \right)} \right|^2\eta^{\prime}\left( \xi \right) d\xi}\lesssim \int_n^{+\infty}{\frac{\left| \hat{g}\left( \xi \right) \right|^2}{\left| \xi \right|^2}d\xi}.
		\end{aligned}
		\end{equation*}
		Similarly,
		$$\lVert h_2 \rVert _{L_{\eta}^{2}}^{2}\lesssim\int_{-\infty}^{-n}{\frac{\left| \hat{g}\left( \xi \right) \right|^2}{\left| \xi \right|^2}d\xi},$$
		therefore, we can conclude that,
		$$\lVert D_xP^{n}U_1\left( t \right) g \rVert _{L_{t}^{2}}^{2}\lesssim\lVert g \rVert _{L^2}.$$
		The proof for $j=2$ can be obtained similarly, thus we omit. Next, we turn to proof \eqref{2.2}. By direct computation, we observe that $\left| \frac{\phi ^{\prime}\left( \xi \right)}{\phi ^{\prime\prime}\left( \xi \right)} \right|=\left| \frac{3\xi ^2-\frac{1}{2}}{6\xi} \right|\le \frac{1}{2}\left| \xi \right|$. Moreover, according to \textbf{Lemma} \ref{lem1.8} with $\phi \left( \xi \right),$ $-\phi \left( \xi \right)\in \mathcal{A}$, for $n>\frac{1}{2}$, one has,\begin{equation*}
		\begin{aligned}
		\lVert P^nU_j\left( t \right) g \rVert _{L_{x}^{4}L_{t}^{\infty}}&=\lVert U_j\left( t \right) P^ng \rVert _{L_{x}^{4}L_{t}^{\infty}}\\
		&\lesssim\left( \int_{\left| \xi \right|\ge n}{\left| \widehat{P^ng} \right|^2\left| \frac{\phi ^{\prime}\left( \xi \right)}{\phi ^{\prime\prime}\left( \xi \right)} \right|^{\frac{1}{2}}d\xi} \right) ^{\frac{1}{2}}\\
		&\lesssim\left( \int_{\left| \xi \right|\ge n}{\left| \widehat{P^ng} \right|^2\left| \xi \right|^{\frac{1}{2}}d\xi} \right) ^{\frac{1}{2}}=\left( \int_{\left| \xi \right|\ge n}{\left| \widehat{g}\left( \xi \right) \right|^2\left| \xi \right|^{\frac{1}{2}}d\xi} \right) ^{\frac{1}{2}}\lesssim\lVert P^n g \rVert _{H^{\frac{1}{4}}},
		\end{aligned}
		\end{equation*}
		which implies \eqref{2.2}.
	\end{proof}
	The following lemma gives the decay estimate for SOBE, similar conclusion had been proved in Lemma 4.3 of \cite{esfahani2011stability}, but the phase function here is slightly different. 
	\begin{lem}\label{lem2.4} Let $$G\left( x,t \right) :=\int_{\mathbb{R}}{e^{it\phi\left( \xi \right) +ix\xi}}d\xi,$$
		then for any $\delta >0$, there exists a constant $C>0$ such that for any $0<\left| t \right|\le\delta$,
		\begin{equation}\label{2.3}
		\underset{x\in \mathbb{R}}{sup}\left| G\left( x,t \right) \right|\le C\left| t \right|^{-\frac{1}{3}},
		\end{equation}
		where $C$ only depends on $\delta$ and $\phi$.
	\end{lem}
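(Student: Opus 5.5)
We prove \eqref{2.3} by the van der Corput lemma, splitting frequencies dyadically according to the size of $\phi''(\xi)=6|\xi|$ (for $\xi\neq0$). Since $G$ is only an oscillatory integral in the distributional sense, we first insert a smooth cutoff $\chi(\xi/N)$, prove the bound uniformly in $N$, and then let $N\to\infty$. We may take $t>0$; the case $t<0$ follows by complex conjugation together with $x\mapsto -x$. Write the phase as $\Phi(\xi)=t\phi(\xi)+x\xi$. Then $\Phi''(\xi)=6t|\xi|\,\mathrm{sgn}$-free, i.e. $|\Phi''(\xi)|=6t|\xi|$ for $\xi\ne0$, while $\phi$ is only $C^1$ at $0$ (its second derivative jumps there). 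We therefore treat $\xi>0$ and $\xi<0$ separately; on each half-line $\phi$ is a polynomial.

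Fix a threshold $\lambda>0$, to be chosen, and split each half-line into $\{|\xi|\le\lambda\}$ and $\{|\xi|>\lambda\}$. On $|\xi|\le\lambda$ we use the trivial bound $\lesssim\lambda$. On $\{\xi>\lambda\}$ we have $|\Phi''|\ge 6t\lambda$, and $\Phi''$ is monotone there, so the second-derivative van der Corput lemma (Stein's version, allowing the smooth cutoff amplitude via integration by parts, with bounded total variation of $\chi(\cdot/N)$) gives a bound $\lesssim (t\lambda)^{-1/2}$, uniformly in $x$ and $N$. The region $\xi<-\lambda$ is handled identically. Optimizing $\lambda=(t\lambda)^{-1/2}$ gives $\lambda=t^{-1/3}$, and hence $\sup_x|G(x,t)|\lesssim t^{-1/3}$.

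This argument in fact yields the bound for every $t\neq0$ with a constant depending only on $\phi$. The restriction $|t|\le\delta$ in the statement is only needed if one wants to keep a lower-order term: for general $k$ in the original phase $\sqrt{\xi^6+k\xi^4+\xi^2}$, the second derivative is only comparable to $|\xi|$ for $|\xi|$ small or large, with finitely many zeros in between. In that setting one handles the neighborhoods of the (finitely many) zeros of $\phi''$ with the third-derivative van der Corput bound $\lesssim t^{-1/3}$, using $|\phi'''|\gtrsim1$ there, and the bounded middle region costs $O(1)\le C\delta^{1/3}t^{-1/3}$ for $t\le\delta$. For the modified phase $\phi=|\xi|^3-\tfrac12|\xi|$ this complication does not arise.

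The main obstacle is the justification of the limiting procedure. One must check that the van der Corput constant does not depend on $N$, which holds because $\|\chi(\cdot/N)\|_{L^\infty}+\|\partial_\xi\chi(\cdot/N)\|_{L^1}$ is bounded independently of $N$, and one must handle the non-smooth point $\xi=0$. The point $\xi=0$ causes no difficulty, because it is removed by the split at $\lambda$.
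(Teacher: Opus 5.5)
Your argument is correct. The paper gives no proof of this lemma; it calls the result standard and refers to \cite{cui2005strichartz}. Your version is a self-contained instance of the standard van der Corput argument that citation points to. It uses the fact that for the modified phase $\phi''(\xi)=6|\xi|$ exactly for $\xi\neq0$. You split at $|\xi|=|t|^{-1/3}$: the inner piece gets the trivial bound, and the outer pieces get the second-derivative van der Corput bound on $(\lambda,\infty)$ and $(-\infty,-\lambda)$ separately. This gives the decay for every $t\neq0$ with an absolute constant, which is slightly stronger than the local-in-time statement. As you observe yourself, the restriction $0<|t|\le\delta$ only becomes necessary for phases with lower-order structure, such as the original $\sqrt{\xi^6+k\xi^4+\xi^2}$, where $\phi''$ changes sign at intermediate frequencies.

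Two small inaccuracies do not affect the argument. First, $\phi$ is not $C^1$ at the origin: the $-\frac12|\xi|$ term gives $\phi'(0^{\pm})=\mp\frac12$. This is harmless because the region $|\xi|\le\lambda$ is bounded trivially and van der Corput is applied only where $\phi$ is a polynomial. Second, monotonicity of $\Phi''$ is not needed for the second-derivative van der Corput lemma.

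For the passage $N\to\infty$, one line suffices. Since $e^{it\phi}\chi(\cdot/N)\to e^{it\phi}$ in $\mathcal{S}'$, you get $G_N(\cdot,t)\to G(\cdot,t)$ in $\mathcal{S}'$. Then $|\langle G,\varphi\rangle|\le\sup_N\|G_N(\cdot,t)\|_{L^\infty}\|\varphi\|_{L^1}$, so the uniform bound transfers to $G$.
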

	\begin{proof}
		The proof is standard, we omit for simplicity, readers can refer \cite{cui2005strichartz}.
	\end{proof}
	By the decay estimate, we can obtain the following lemma.
	\begin{lem}\label{lem2.5}
		For $j=1,2$,
		\be\label{2.4}
		\lVert U_j\left( t \right) g \rVert _{L_{x}^{p}}\lesssim \left| t \right|^{-\frac{1}{3}\left( 1-\frac{2}{p} \right)}\lVert g \rVert _{L^{p^{\prime}}},
		\ee
		where $\frac{1}{p}+\frac{1}{p^{\prime}}=1$, $p\ge2$.
		\begin{proof}
			First we observe that $U_1\left( t \right) g\left( x \right) =\left( G\left( \cdot ,t \right) *g \right) \left( x \right) $, then by young's inequality and \textbf{Lemma} \ref{lem2.2},
			\begin{equation}\label{2.5}
			\lVert U_1\left( t \right) g \rVert _{L_{x}^{\infty}}\le \lVert G\left( x,t \right) \rVert _{L_{x}^{\infty}}\lVert g \rVert _{L^1}\lesssim\left| t \right|^{-\frac{1}{3}}\lVert g \rVert _{L^1}.
			\end{equation} 
			Moreover, it can be checked that, $$\lVert U_1\left( t \right) g \rVert _{L_{x}^{2}}=\left\lVert \left( e^{it\phi\left( \xi \right)}\widehat{g}\left( \xi \right) \right) ^{\lor}\left( x,t \right) \right\rVert _{L_{x}^{2}}=\lVert e^{it\phi\left( \xi \right)}\widehat{g}\left( \xi \right) \rVert _{L_{\xi}^{2}}=\lVert g \rVert _{L^2},$$
			thus, one has,
			\be\label{2.6}
			\lVert U_1\left( t \right) g \rVert _{L_{x}^{2}}\lesssim \lVert g \rVert _{L^2}.
			\ee
			Let $Tg:=U_1\left( t \right) g$, we conduct an interpolation by \textbf{Lemma} \ref{2.1} between \eqref{2.5} and \eqref{2.6}, it leads to,
			$$\lVert T_1\left( t \right) g \rVert _{L_{x}^{p_{\theta}}}\lesssim\left| t \right|^{-\frac{1-\theta}{3}}\lVert g \rVert _{L^{q_{\theta}}},$$
			where $\frac{1}{p_{\theta}}=\frac{1-\theta}{\infty}+\frac{\theta}{2}$, $\frac{1}{q_{\theta}}=\frac{1-\theta}{1}+\frac{\theta}{2},$ which is equivalent to $\theta =\frac{2}{p_{\theta}}$ and $\frac{1}{p_{\theta}}+\frac{1}{q_{\theta}}=1$. This proves \eqref{2.4} for $j=1$. For $j=2$, the result follows by the fact that $U_2\left( t \right) g=U_1\left( -t \right) g$.
		\end{proof}
	\end{lem}
	Next, we turn to prove the Strichartz estimates($L^pL^q$ estimates) for SOBE.
	\begin{lem}\label{lem2.6}
		\be\label{2.7}
		\lVert U_j\left( t \right) g \rVert _{L_{t}^{q}L_{x}^{p}}\lesssim\lVert g \rVert _{L^2}, \quad j=1,2,
		\ee
		with $\frac{2}{q}=\frac{1}{3}\left( 1-\frac{2}{p} \right)$, $2\le p,q\le8$.
	\end{lem}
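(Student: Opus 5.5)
This is the classical $TT^*$ argument, with the dispersive decay of Lemma \ref{lem2.5} as the only input. Fix $j=1$; the case $j=2$ follows at once from $U_2(t)=U_1(-t)$. Let $T g:=U_1(t)g$, viewed as a map from $L^2(\mathbb{R})$ to $L^q_tL^p_x$. It suffices to show that $TT^*$ is bounded from $L^{q'}_tL^{p'}_x$ to $L^q_tL^p_x$.

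Since $U_1$ is unitary on $L^2$, with $U_1(t)U_1(s)^*=U_1(t-s)$, we have
$$TT^*F(t)=\int_{\mathbb{R}}U_1(t-s)F(s)\,ds.$$
The endpoint $p=2$, $q=\infty$ is immediate from $L^2$ conservation, so we may take $p>2$.

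For fixed $t$, apply Minkowski's inequality and Lemma \ref{lem2.5}. This gives
$$\lVert TT^*F(t)\rVert_{L^p_x}\lesssim\int_{\mathbb{R}}|t-s|^{-\frac13(1-\frac2p)}\lVert F(s)\rVert_{L^{p'}_x}\,ds.$$
Set $\gamma:=\frac13(1-\frac2p)=\frac2q$. Then $0<\gamma<1$, and
$$\frac1{q'}-\frac1q=1-\frac2q=1-\gamma.$$
Hardy--Littlewood--Sobolev in the time variable therefore maps $L^{q'}_t$ to $L^q_t$, provided $1<q'<q<\infty$. That yields
$$\lVert TT^*F\rVert_{L^q_tL^p_x}\lesssim\lVert F\rVert_{L^{q'}_tL^{p'}_x}.$$
The standard duality argument then converts this into
$$\lVert Tg\rVert_{L^q_tL^p_x}^2=\langle TT^*F,F\rangle\lesssim\lVert F\rVert^2\quad\text{(by duality in } g\text{)},$$
which is \eqref{2.7}.

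The main obstacle is that Lemma \ref{lem2.4} supplies the decay $|t|^{-1/3}$ only for $0<|t|\le\delta$, whereas HLS needs a global kernel bound. There are two ways around this.

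\textbf{Option 1 (global decay).} Check that the decay is actually global. For $\phi(\xi)=|\xi|^3-\frac12|\xi|$ we have $\phi''=6|\xi|$ away from zero. A van der Corput argument splitting at $|\xi|\sim|t|^{-1/3}$ should give $|t|^{-1/3}$ for all $t$, because the linear part of $\phi$ does not destroy the cubic curvature. One caveat: $\phi$ is not smooth at $0$, and there $\phi''$ jumps by a delta mass coming from $-\frac12|\xi|$. This needs separate care, for instance by splitting $\hat g$ into $\xi>0$ and $\xi<0$ pieces, on each of which the phase is a smooth polynomial.

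\textbf{Option 2 (local in time).} Since the estimate is only applied on $[-T,T]$ in the later multilinear arguments, interpret \eqref{2.7} locally in time with $\delta\ge 2T$. The kernel $|t-s|^{-\gamma}\chi_{|t-s|\le\delta}$ is then dominated by the full HLS kernel, and the argument above goes through unchanged.

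I would pursue Option 1 first and fall back on Option 2 if needed.

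Finally, the range restriction $2\le p,q\le8$ corresponds exactly to $\gamma\in[0,\frac13)$, which keeps $q\ge 2/\gamma\cdot\gamma$ consistent. At the $q=8$ extreme ($p=\infty$), the $L^\infty_x$ endpoint is reached by the same bound, since the decay estimate \eqref{2.5} holds there.
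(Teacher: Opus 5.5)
Your route is the paper's: $TT^*$, the dispersive bound of Lemma \ref{lem2.5} inside the time integral, and Hardy--Littlewood--Sobolev in $t$ with $\gamma=\frac13(1-\frac2p)=\frac2q$.

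You are right that Lemma \ref{lem2.4} gives $|t|^{-1/3}$ only for $0<|t|\le\delta$. The paper nonetheless applies Lemma \ref{lem2.5} on all of $\mathbb{R}$ without comment. Your Option 1 is the correct repair, and it closes at once. On each half-line the phase $t\phi(\xi)+x\xi$ is a cubic polynomial with third derivative $\pm 6t$. Van der Corput's lemma with $k=3$ then bounds $\int_0^R$ and $\int_{-R}^0$ of $e^{i(t\phi(\xi)+x\xi)}$ by $C|t|^{-1/3}$, uniformly in $R$, in $x$, and in all $t\neq0$. No splitting at $|\xi|\sim|t|^{-1/3}$ is needed. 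Option 2 would not suffice: \eqref{2.7}, and its use in Lemma \ref{lem2.10}, are global in time.

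Two smaller corrections.

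First, your last paragraph misreads the range. The relation $\frac2q=\frac13(1-\frac2p)$ forces $q\ge6$. The condition $q\le8$ then means $\gamma\ge\frac14$, i.e. $p\ge8$. So the stated range is the single pair $p=q=8$, which is the case used in Lemma \ref{lem2.10}. Also, $p=\infty$ corresponds to $q=6$, not $q=8$, and $(p,q)=(2,\infty)$ lies outside the range. Your argument covers $p=q=8$ (indeed every $2<p\le\infty$), so nothing breaks.

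Second, the duality line should read $\lVert T^*F\rVert_{L^2}^2=\langle TT^*F,F\rangle\le\lVert TT^*F\rVert_{L^q_tL^p_x}\lVert F\rVert_{L^{q'}_tL^{p'}_x}$. This gives boundedness of $T^*$ from $L^{q'}_tL^{p'}_x$ to $L^2$, and hence of $T$ by duality.
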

	\begin{proof}
		We shall use the TT* argument(interested reader can refer Lemma 2.2 in \cite{ginibre1995generalized}) and Hardy-littlewood-Sobolev inequality which claims that,
		\be\label{2.8}
		\lVert f*\left| \cdot \right|^{m} \rVert _{L^a}\lesssim\lVert f \rVert _{L^b},
		\ee
		where $1<b<a<\infty$, $0<m <1$ and $1+\frac{1}{a}=m +\frac{1}{b}$.\\
		For $j=1$, let $U_{1}^{*}\left( t \right)$ be the duality of $U_1(t)$, then we can obtain,
		\begin{equation*}
		\begin{aligned}
		\left< U_{1}^{*}f,g \right> _{L^2}=\left< f,U_1g \right> _{L^2}=&\int_{\mathbb{R}}{\int_{\mathbb{R}}{f\left( t,x \right) \overline{\int_{\mathbb{R}}{e^{it\phi\left( \xi \right) +ix\xi}\widehat{g}\left( \xi \right) d\xi}}dxdt}}\\
		=&\int_{\mathbb{R}}{\int_{\mathbb{R}}{\overset{\land}{f^x}\left( t,\xi \right) e^{-it\phi \left( \xi \right)}\overline{\widehat{g}}\left( \xi \right)}d\xi dt}\\
		=&\int_{\mathbb{R}}{\int_{\mathbb{R}}{\left( \overset{\land}{f^x}\left( t,\xi \right) e^{-it\phi \left( \xi \right)} \right) ^{\lor}\overline{g}\left( x \right)}dxdt}
		=&\int_{\mathbb{R}}{\int_{\mathbb{R}}{U_2\left( t \right) f\left( t,\cdot \right) dt\overline{g}\left( x \right)}dx}
		\end{aligned}
		\end{equation*}
		by the fact that $U_2\left( t \right) =U_1\left( -t \right)$. Thus, $$U_{1}^{*}f\left( t,\cdot \right) =\int_{\mathbb{R}}{U_2\left( t \right) f\left( t,\cdot \right) dt},\quad U_1U_{1}^{*}f\left( t,\cdot \right) =\int_{\mathbb{R}}{U_1\left( \tau -t \right) f\left( t,\cdot \right) dt}.$$ 
		Hence, by TT* argument, 
		$$\lVert U_1g \rVert _{L_{t}^{q}L_{x}^{p}}\lesssim \lVert g \rVert _{L^2}$$
		is equivalent to:
		$$\lVert U_1U_{1}^{*}f \rVert _{L_{t}^{q}L_{x}^{p}}\lesssim \lVert f \rVert _{L_{t}^{q^\prime}L_{x}^{p^\prime}},$$
		where $\frac{1}{p}+\frac{1}{p^{\prime}}=1$, $\frac{1}{q}+\frac{1}{q^{\prime}}=1$.
		Then, by \eqref{2.4}, one has,\begin{equation*}
		\begin{aligned}
		\lVert U_1U_{1}^{*}f \rVert _{L_{x}^{p}}=&\int_{\mathbb{R}}{\lVert U_1\left( \tau -t \right) f\left( t,\cdot \right) \rVert _{L_{\xi}^{p}}}dt\\
		\lesssim&\int_{\mathbb{R}}{\left| \tau -t \right|^{-\frac{1}{3}\left( 1-\frac{2}{p} \right)}\lVert f \rVert _{L_{x}^{p^{\prime}}}}dt=\lVert f \rVert _{L_{x}^{p^{\prime}}}*\left| t \right|^{-\frac{1}{3}\left( 1-\frac{2}{p} \right)},
		\end{aligned}
		\end{equation*}
		therefore, by \eqref{2.8}, for $\frac{1}{3}\left( 1-\frac{2}{p} \right) +q^{\prime}=1+\frac{1}{q}$, $q^{\prime}=\frac{q}{q-1}$, we have,
		$$\lVert U_1U_{1}^{*}f \rVert _{L_{t}^{q}L_{x}^{p}}\lesssim \lVert f \rVert _{L_{t}^{q^\prime}L_{x}^{p^\prime}}.$$ For $j=2$, \eqref{2.7} follows by the fact that $U_2\left( t \right) =U_1\left( -t \right)$.
	\end{proof}
	Finally, the following lemma gives the estimate on $L_{x}^{2}L_{T}^{\infty}$ norm for $U_j(t)$.
	\begin{lem}\label{lem2.7}
		For any $\delta>0$ and $s>\frac{3}{4}$ there exists a constant $C$ depending on $\delta$, $s$ such that for any $g\in H^s\left( \mathbb{R} \right)$, $0<T\le \delta$,
		\be\label{2.9}
		\lVert U_j\left( t \right) g \rVert _{L_{x}^{2}L_{T}^{\infty}}\le C\lVert g \rVert _{H^s}.
		\ee
	\end{lem}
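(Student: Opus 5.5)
We prove \eqref{2.9} for $U_1$ by splitting $g$ into low and high frequencies and summing dyadic pieces of the high part. The case $j=2$ then follows from $U_2(t)=U_1(-t)$, since the time interval $[-T,T]$ is symmetric.

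\textbf{Low frequencies.} For $P_1 g$ we use a Sobolev embedding in time. Write $U_1(t)P_1g(x)=\int_{|\xi|\le1}e^{ix\xi+it\phi(\xi)}\hat g(\xi)d\xi$. On $|\xi|\le1$ the phase is bounded, $|\phi(\xi)|\le 1$. Introduce a cutoff $\psi\in C_c^\infty$ with $\psi\equiv1$ on $[-\delta,\delta]$ and expand $\psi(t)e^{it\phi(\xi)}$ in $t$. One route is to control $\sup_{|t|\le T}$ by the $H^1_t$ norm of $\psi(t)U_1(t)P_1g$ and then apply Plancherel in $x$, which gives
\[
\|U_1P_1g\|_{L^2_xL^\infty_T}\lesssim \|\psi U_1P_1g\|_{L^2_xH^1_t}\lesssim_\delta \|P_1g\|_{L^2}.
\]
Here $\partial_t$ only produces the factor $\phi(\xi)$, which is bounded on $|\xi|\le1$. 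Alternatively, a Taylor expansion of $e^{it\phi}$ gives the same bound directly.

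\textbf{High frequencies.} Decompose $P^1 g=\sum_{k\ge0}g_k$ with $\hat g_k$ supported in $|\xi|\sim 2^k$. It suffices to show
\[
\|U_1(t)g_k\|_{L^2_xL^\infty_T}\lesssim_\delta 2^{3k/4}\|g_k\|_{L^2}.
\]
Then for $s>\frac34$, Cauchy--Schwarz over $k$ gives $\sum_k 2^{3k/4}\|g_k\|_{L^2}\lesssim \|g\|_{H^s}$.

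For a single dyadic block we use the standard $TT^*$ argument in the $L^2_xL^\infty_t$ setting, as in Kenig--Ponce--Vega's maximal function estimate for KdV. The claim is equivalent to
\[
\Big\|\int_{-T}^T\!\int K_k(x-y,t-t')F(y,t')\,dy\,dt'\Big\|_{L^2_xL^\infty_T}\lesssim 2^{3k/2}\|F\|_{L^2_xL^1_T},
\]
where $K_k(x,t)=\int e^{ix\xi+it\phi(\xi)}\chi_k(\xi)^2d\xi$. By Young's inequality in $x$, it is enough to bound
\[
H_k(x):=\sup_{|t|\le 2\delta}|K_k(x,t)|
\]
in $L^1_x$ by $2^{3k/2}$. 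We estimate $H_k$ in two regions.

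\textbf{Estimating $H_k$.} For $|x|\lesssim 2^{2k}$, the van der Corput bound behind Lemma \ref{lem2.4} gives $|K_k|\lesssim 2^{k}$, the trivial bound. We refine this using $\phi''\sim 2^k$, which gives $|K_k|\lesssim |t|^{-1/2}2^{-k/2}$, and use stationary phase: $|K_k(x,t)|$ is only large near $x\approx -t\phi'(\xi)\sim t2^{2k}$. Since $|t|\le2\delta$, the stationary points lie in $|x|\lesssim 2^{2k}$. There $|t|\gtrsim |x|2^{-2k}$, so $H_k(x)\lesssim \min(2^k,\,|x|^{-1/2}2^{k/2})$. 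For $|x|\gg 2^{2k}$ there is no stationary point, and integrating by parts twice gives $H_k(x)\lesssim 2^k(2^k|x|)^{-2}$, since $|\partial_\xi(x\xi+t\phi)|\gtrsim|x|$.

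Integrating these bounds, the region $|x|\lesssim 2^{2k}$ contributes
\[
\int_{|x|\le 2^{2k}}|x|^{-1/2}2^{k/2}\,dx\sim 2^{k}\cdot2^{k/2}=2^{3k/2},
\]
and the tail contributes less. Hence $\|H_k\|_{L^1}\lesssim 2^{3k/2}$, which gives the required $2^{3k/4}$ after taking square roots.

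\textbf{Main obstacle.} The delicate step is this pointwise bound on $H_k$ uniformly over $|t|\le 2\delta$. In particular one must handle the non-smooth point $\xi=0$ of $\phi$ and the two signs of $\xi$. Both are harmless in the high-frequency pieces, where $|\xi|\sim2^k\ge1$ and $\phi$ is smooth on each half-line with $|\phi'|\sim\xi^2$ and $|\phi''|\sim|\xi|$. The time-local restriction $T\le\delta$ is exactly what makes the stationary region bounded and the constant depend on $\delta$.
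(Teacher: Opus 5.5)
Your proof is correct, but it does not follow the paper. The paper gives no argument for Lemma \ref{lem2.7}; it only refers to Theorem 2.8 of \cite{cui2005strichartz}, a general local maximal-function estimate for dispersive groups of this type. Your proof is self-contained and has two parts:
\begin{itemize}
\item For $|\xi|\le 1$, where $\phi$ is bounded, you use the embedding $H^1_t\subset L^\infty_t$ with a time cutoff.
\item For each dyadic block you use a $TT^*$ reduction to the kernel bound $\|\sup_{|t|\le 2\delta}|K_k(\cdot,t)|\|_{L^1_x}\lesssim_\delta 2^{3k/2}$. This comes from van der Corput ($|t\phi''|\sim|t|2^k$) when $|t|\gtrsim |x|2^{-2k}$, and from two integrations by parts when $|t|\ll|x|2^{-2k}$.
\end{itemize}
Summing the block losses $2^{3k/4}$ is exactly where $s>\frac34$ enters.

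The citation is shorter, but it leaves the reader to check that the modified phase $|\xi|^3-\frac12|\xi|$, which is not smooth at $0$, meets the hypotheses of the cited theorem. Your route uses only $|\phi'|\sim\xi^2$, $|\phi''|\sim|\xi|$ and $|\phi'''|\lesssim 1$ on $|\xi|\gtrsim 1$, together with boundedness of $\phi$ near $0$. So it applies directly to either $|\xi|^3-\frac12|\xi|$ or $\sqrt{\xi^6+k\xi^4+\xi^2}$, and it shows why the threshold is $\frac34$.

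Two points of bookkeeping need fixing.
\begin{itemize}
\item In the region $|x|\lesssim \delta 2^{2k}$, the supremum over $t$ also includes $|t|\ll |x|2^{-2k}$, where your sentence ``there $|t|\gtrsim|x|2^{-2k}$'' fails. That range is handled by the same non-stationary integration by parts, which gives $\lesssim 2^{-k}|x|^{-2}\le |x|^{-1/2}2^{k/2}$ for $|x|\ge 2^{-k}$.
\item The amplitude in $K_k$ must be a smooth bump, for example a fattened $\tilde\chi_k$ equal to $1$ on the support of $\widehat{g_k}$. With the sharp cutoffs $P^1$ or sharp dyadic pieces, integration by parts produces boundary terms of size $|x|^{-1}$, and the tail $|x|\gg 2^{2k}$ would not be integrable.
\end{itemize}
Finally, the remark that ``the van der Corput bound behind Lemma \ref{lem2.4} gives $|K_k|\lesssim 2^k$'' is misattributed: $2^k$ is simply the trivial bound from the size of the support. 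This does not affect the argument.
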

	\begin{proof}
		The proof is standard, interested reader can refer Theorem 2.8 in \cite{cui2005strichartz}.
	\end{proof}
	\subsection{Estimates for $F_\rho$}
	We intend to provide estimates for $F_{\rho}$ on various norm of $L^pL^q$ which are the main tool for later theorem.
	\begin{lem}\label{lem2.8}
		If $\rho >\frac{1}{2}$, then for fixed $N_0>0$,
		\begin{equation}\label{2.10}
		\lVert P_{N_0}F_{\rho} \rVert _{L_{x}^{2}L_{t}^{\infty}}\lesssim\lVert f \rVert _{L_{\xi}^{2}L_{\tau}^{2}}.
		\end{equation}
	\end{lem}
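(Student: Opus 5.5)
The plan is to reduce \eqref{2.10} to a maximal-function estimate for the free groups $U_1,U_2$ restricted to frequencies $|\xi|\le N_0$. The weight $\langle|\tau|-\phi(\xi)\rangle^{-\rho}$ with $\rho>\frac12$ is square integrable in the modulation variable, and this is what makes the reduction work.

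\textbf{Step 1: decomposition.} First split $f=f\chi_{\tau\ge0}+f\chi_{\tau<0}$. On $\{\tau\ge 0\}$ change variables $\tau=\phi(\xi)+\lambda$, and on $\{\tau<0\}$ change variables $\tau=-\phi(\xi)-\lambda$. This gives
\begin{equation*}
P_{N_0}F_\rho(x,t)=\int_{\mathbb{R}}\frac{e^{it\lambda}}{\langle\lambda\rangle^{\rho}}\,P_{N_0}U_1(t)g_\lambda(x)\,d\lambda+\int_{\mathbb{R}}\frac{e^{-it\lambda}}{\langle\lambda\rangle^{\rho}}\,P_{N_0}U_2(t)\tilde g_\lambda(x)\,d\lambda,
\end{equation*}
where $\widehat{g_\lambda}(\xi)=\chi_{\phi(\xi)+\lambda\ge0}f(\xi,\phi(\xi)+\lambda)$, and $\tilde g_\lambda$ is defined analogously. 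Since $\langle|\tau|-\phi\rangle=\langle\lambda\rangle$ on the relevant region, the weight is exactly $\langle\lambda\rangle^{-\rho}$. The phase factor $e^{\pm it\lambda}$ has modulus one, so it does not affect the $L^\infty_t$ norm. Minkowski's inequality in $\lambda$, followed by Cauchy--Schwarz with $\int\langle\lambda\rangle^{-2\rho}d\lambda<\infty$ (this is where $\rho>\frac12$ enters), gives
\begin{equation*}
\lVert P_{N_0}F_\rho\rVert_{L^2_xL^\infty_t}\lesssim\Big(\int_{\mathbb{R}}\big(\lVert P_{N_0}U_1g_\lambda\rVert^2_{L^2_xL^\infty_t}+\lVert P_{N_0}U_2\tilde g_\lambda\rVert^2_{L^2_xL^\infty_t}\big)d\lambda\Big)^{\frac12}.
\end{equation*}
Moreover, by Fubini and the unit Jacobian of $\tau\mapsto\lambda$ at fixed $\xi$, we have $\int\lVert g_\lambda\rVert_{L^2}^2d\lambda\le\lVert f\rVert^2_{L^2_{\xi}L^2_\tau}$, and the same holds for $\tilde g_\lambda$.

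\textbf{Step 2: linear maximal estimate.} After Step 1 it suffices to show
\begin{equation*}
\lVert P_{N_0}U_j(t)g\rVert_{L^2_xL^\infty_t}\lesssim_{N_0}\lVert g\rVert_{L^2},\qquad j=1,2.
\end{equation*}
Because $\widehat{P_{N_0}g}$ is supported in $|\xi|\le N_0$, every $H^s$ norm of $P_{N_0}g$ is controlled by $\langle N_0\rangle^{s}\lVert g\rVert_{L^2}$. Lemma \ref{lem2.7} with some $s>\frac34$ then gives the bound on any time window $[-\delta,\delta]$, with a constant depending on $\delta$ and $N_0$.

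\textbf{Main obstacle: removing the restriction to a time window.} Passing from $L^\infty_T$ to $L^\infty_t$ over all of $\mathbb{R}$ is the hard step, since Lemma \ref{lem2.7} gives no uniformity in $\delta$. I would first try a direct kernel argument.

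Write $P_{N_0}U_1(t)g=K_t*g$, where $K_t(x)=\int\psi(\xi)e^{ix\xi+it\phi(\xi)}d\xi$ and $\psi$ is a smooth cutoff equal to one on $[-N_0,N_0]$. Then bound $\sup_t|K_t*g|$ by a sum of two pieces.
\begin{enumerate}[(i)]
\item For $|x|\gg|t|$, the phase $x\xi+t\phi(\xi)$ is non-stationary because $|\phi'|\lesssim N_0^2$ on the support. Integrating by parts in $\xi$ then gives $|K_t(x)|\lesssim\langle x\rangle^{-2}$ uniformly in $t$.
\item For $|x|\lesssim|t|$, use the dispersive decay $|t|^{-1/3}$ from Lemma \ref{lem2.4}, or stationary phase near the inflection point $\xi=0$, where $\phi''(\xi)=6|\xi|$.
\end{enumerate}
The aim is to dominate $\sup_t|K_t(x-y)|$ by an integrable majorant, or to control the resulting maximal operator on $L^2$ through a Hardy--Littlewood-type argument.

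If this direct kernel bound turns out to be too weak in region (ii), my fallback is to exploit the compact Fourier support in a different way. I would apply the Sobolev embedding $H^{\frac12+}_t\subset L^\infty_t$ in time to the frequency-localized piece, and control $\lVert P_{N_0}U_j g\rVert_{L^2_xH^{\frac12+}_t}$ through the change of variables $\eta=\phi_1(\xi)$. This is the same change of variables used in the proof of \eqref{2.1}, and it is where the degeneracy of $\phi_1'$ near $|\xi|\le\frac12$ would need separate care.
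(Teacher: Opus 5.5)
\textbf{There is a genuine gap.} Your Step 1 is a correct identity, and the Minkowski/Cauchy--Schwarz bound is valid. The problem is that it reduces the lemma to an estimate that is false. The global-in-time maximal bound $\lVert P_{N_0}U_j(t)g\rVert_{L^2_xL^\infty_t}\lesssim\lVert g\rVert_{L^2}$, with $t$ ranging over all of $\mathbb{R}$, fails even for smooth frequency-localized data.

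To see this, take $\hat g$ a smooth bump supported in a small interval $I\subset(0,N_0)$ on which $\phi'(\xi)=3\xi^2-\frac12\neq0$ and $\phi''(\xi)=6\xi>0$, with $|\hat g(\xi_0)|\ge c$ at some $\xi_0\in I$. Stationary phase gives $|U_1(t)g(x)|\gtrsim|t|^{-1/2}$ at $x=-t\phi'(\xi_0)$ for $|t|$ large. Hence $\sup_t|U_1(t)g(x)|\gtrsim|x|^{-1/2}$ on a half-line, which is not in $L^2_x$.

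So the "main obstacle" cannot be overcome by any method. Your kernel majorant $\sup_t|K_t(x)|$ decays only like $|x|^{-1/2}$, so it is not integrable. The time-Sobolev fallback fails even more clearly: $\lVert U_j(t)g\rVert_{L^2_x}=\lVert g\rVert_{L^2}$ for every $t$, so a free wave is not even in $L^2_xL^2_t$, let alone in $L^2_xH^{\frac12+}_t$. Lemma \ref{lem2.7} gives only the $L^2_xL^\infty_T$ version, and that is all your route can produce. The loss happens when you apply Minkowski in $\lambda$. $F_\rho$ itself lies in $L^2_{x,t}$, with norm at most $\lVert f\rVert_{L^2}$, so it decays in time. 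That decay comes from cancellation between different modulations $\lambda$, and pulling the norm inside the $\lambda$-integral discards it.

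The missing idea is that at low frequency you need no dispersion at all. For $|\xi|\le N_0$ one has $|\phi(\xi)|\le C(N_0)$, hence $\langle\tau\rangle\le(1+C(N_0))\langle|\tau|-\phi(\xi)\rangle$. Therefore $\widehat{P_{N_0}F_\rho}=\langle\tau\rangle^{-\rho}\,mf$, where $m=\chi_{|\xi|\le N_0}\langle\tau\rangle^{\rho}\langle|\tau|-\phi(\xi)\rangle^{-\rho}$ is bounded.

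Let $G(x,\tau)$ be the inverse Fourier transform of $mf$ in $\xi$ only. Then $P_{N_0}F_\rho(x,t)=\int e^{it\tau}\langle\tau\rangle^{-\rho}G(x,\tau)\,d\tau$. Cauchy--Schwarz in $\tau$ gives, for every $x$,
\[
\sup_{t\in\mathbb{R}}|P_{N_0}F_\rho(x,t)|\le\lVert\langle\tau\rangle^{-\rho}\rVert_{L^2_\tau}\lVert G(x,\cdot)\rVert_{L^2_\tau},
\]
and this is exactly where $\rho>\frac12$ enters. Squaring, integrating in $x$, and applying Plancherel in $x$ gives $\lVert P_{N_0}F_\rho\rVert_{L^2_xL^\infty_t}\lesssim\lVert mf\rVert_{L^2}\lesssim_{N_0}\lVert f\rVert_{L^2_\xi L^2_\tau}$.

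This is the paper's argument. In your notation, it amounts to keeping $e^{\pm it\lambda}$ and $e^{\pm it\phi(\xi)}$ together as $e^{it\tau}$, and applying Cauchy--Schwarz in $\tau$ rather than in $\lambda$ after factoring out $U_j(t)$.
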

	\begin{proof}
		First we let $\widehat{H_{\rho}}\left( \xi ,\tau \right) :=\frac{f\left( \xi ,\tau \right)}{\left< \tau \right>^{\rho}}$, then for $\rho>\frac{1}{2}$,
		\begin{equation*}
		\begin{aligned}
		\lVert H_{\rho} \rVert _{L_{x}^{2}}=&\Big\lVert \left( \frac{f\left( \xi ,\tau \right)}{\left< \tau \right>^{\rho}} \right) ^{\lor} \Big\rVert _{L_{x}^{2}}\\
		=&\Big\lVert \int_{-\infty}^{+\infty}{\frac{f\left( \xi ,\tau \right) e^{it\tau}}{\left< \tau \right> ^{\rho}}d\tau} \Big\rVert _{L_{\xi}^{2}}\\
		\le&\Big\lVert \int_{-\infty}^{+\infty}{\left| \frac{f\left( \xi ,\tau \right)}{\left< \tau \right> ^{\rho}} \right|d\tau} \Big\rVert _{L_{\xi}^{2}}\le\Big\lVert \lVert f\left( \xi ,\tau \right) \rVert _{L_{\tau}^{2}}\lVert \left< \tau \right> ^{\rho} \rVert _{L_{\tau}^{2}} \Big\rVert _{L_{\xi}^{2}}\lesssim\lVert f \rVert _{L_{\xi}^{2}L_{\tau}^{2}},
		\end{aligned}
		\end{equation*}
		Note the fact that $\frac{\chi _{\left| \xi \right|\le n}\left( 1+\left| \tau \right| \right) ^{\rho}}{\left( 1+\left| \left| \tau \right|-\phi _1\left( \xi \right) \right| \right) ^{\rho}}$ is bounded, we have \eqref{2.10}.
	\end{proof}
	\begin{lem}\label{lem2.9}
		For $\rho >\frac{1}{2}$, $s>\frac{3}{4}$, one has, \be\label{2.11}
		\lVert D_xP^nF_{\rho} \rVert _{L_{x}^{\infty}L_{t}^{2}}\lesssim\lVert f \rVert _{L_{\xi}^{2}L_{\tau}^{2}},
		\ee
		\be\label{2.12}
		\lVert D_{x}^{-\frac{1}{4}}P^nF_{\rho} \rVert _{L_{x}^{4}L_{t}^{\infty}}\lesssim\lVert f \rVert _{L_{\xi}^{2}L_{\tau}^{2}}.
		\ee
		\be\label{2.13}
		\lVert J_{x}^{-s}F_{\rho} \rVert _{L_{x}^{2}L_{t}^{\infty}}\lesssim \lVert f \rVert _{L_{\xi}^{2}L_{\tau}^{2}}.
		\ee
	\end{lem}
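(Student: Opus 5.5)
The plan is the standard transference argument: write $F_\rho$ as a superposition of free solutions of the modified groups $U_1,U_2$, then transfer the linear estimates of Lemma \ref{lem2.3} and Lemma \ref{lem2.7} using $\rho>\frac12$.

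\textbf{Step 1: the superposition.} Split $\widehat{F_\rho}=\chi_{\tau\ge0}\widehat{F_\rho}+\chi_{\tau<0}\widehat{F_\rho}$. For $\tau\ge0$ we have $|\tau|-\phi(\xi)=\tau-\phi(\xi)$. On the piece $\xi>0$ substitute $\tau=\lambda+\phi(\xi)$; on $\xi\le0$ handle the sign through $\phi_1$ in the same way, and treat $\tau<0$ symmetrically. This gives
$$F_\rho(x,t)=\int_{\mathbb{R}}\frac{e^{it\lambda}}{\langle\lambda\rangle^{\rho}}\,U_1(t)g_\lambda(x)\,d\lambda+\int_{\mathbb{R}}\frac{e^{it\lambda}}{\langle\lambda\rangle^{\rho}}\,U_2(t)\tilde g_\lambda(x)\,d\lambda,$$
where $\widehat{g_\lambda}(\xi)=f(\xi,\lambda+\phi(\xi))\chi_{\lambda+\phi(\xi)\ge0}$, and $\tilde g_\lambda$ is defined analogously from the $\tau<0$ part. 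The unimodular factor $e^{it\lambda}$ does not affect any of the three norms in question, since each takes an absolute value in $t$ before the $L^p_x$ norm.

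\textbf{Step 2: Minkowski and the linear estimates.} Apply Minkowski's inequality to move the $\lambda$-integral outside. For \eqref{2.11}, the operators $D_xP^n$ commute with the integral, and \eqref{2.1} bounds the $\lambda$-slice by $\lVert g_\lambda\rVert_{L^2}$. For \eqref{2.12} use \eqref{2.2} the same way. For \eqref{2.13} use Lemma \ref{lem2.7} with $s>\frac34$, giving $\lVert J_x^{-s}U_j(t)g_\lambda\rVert_{L^2_xL^\infty_T}\lesssim\lVert g_\lambda\rVert_{L^2}$.

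\textbf{Step 3: summing in $\lambda$.} Each estimate is now bounded by $\int\langle\lambda\rangle^{-\rho}\lVert g_\lambda\rVert_{L^2}\,d\lambda$. By Cauchy--Schwarz with $\rho>\frac12$ this is at most
$$C\Big(\int\lVert g_\lambda\rVert_{L^2}^2\,d\lambda\Big)^{1/2}=C\lVert f\rVert_{L^2_\xi L^2_\tau},$$
where the last equality is the change of variables $\tau=\lambda\pm\phi(\xi)$, which has unit Jacobian for fixed $\xi$.

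\textbf{Main obstacle.} The hardest step is \eqref{2.13}. Lemma \ref{lem2.7} only controls $L^\infty_T$ on $0<T\le\delta$, whereas \eqref{2.13} is stated with $L^\infty_t$ over all of $\mathbb{R}$. I would first try to check whether the proof in \cite{cui2005strichartz} gives a $T$-uniform constant for $s>\frac34$; the high-frequency maximal estimate is typically global in time, with the time-local restriction coming only from low frequencies. Failing that, I would use the estimate in its time-localized form, which suffices for the local theory anyway: multiply by a cutoff $\psi(t)$ and absorb the cutoff into $f$. In the same vein, low frequencies $|\xi|\le n$ can be handled with Lemma \ref{lem2.8}. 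The secondary issue is bookkeeping for the sign of $\xi$ in $\phi_1$ versus $\phi$, which only changes which of $U_1,U_2$ appears.
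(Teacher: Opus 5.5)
Your Steps 1--3 follow the paper's route: the substitution $\lambda=\tau\mp\phi(\xi)$, followed by Lemma \ref{lem2.3} and Lemma \ref{lem2.7}. For \eqref{2.11}--\eqref{2.12} your version is actually the rigorous one. The paper puts $e^{it\lambda_1}$ inside $\widehat{g_1}$, so its $g_1$ depends on $t$ and \eqref{2.1} cannot literally be applied to it. Your superposition, Minkowski in $\lambda$, and Cauchy--Schwarz with $\rho>\frac12$ are correct. Since \eqref{2.1} and \eqref{2.2} are global in time, those two estimates are fine. No $\xi$-sign bookkeeping is needed either, because $\phi$ is even.

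The gap is \eqref{2.13}, which the paper does not address either (it only says ``similar argument''). Your first remedy is false: the $L^2_x$ maximal estimate for the free group is local in time at \emph{every} frequency, not only at low frequencies. Take $\widehat g$ an $L^2$-normalized bump of width $\delta=c(NT)^{-1/2}$ at $\xi=N\gg1$. The packet stays coherent for $|t|\le T$, since the phase error is $\lesssim T\phi''(N)\delta^2\lesssim c^2$. Meanwhile it travels a distance $\sim N^2T$. Hence $\lVert J_x^{-s}U_1(t)g\rVert_{L^2_xL^\infty_T}\gtrsim N^{\frac34-s}T^{\frac14}\to\infty$ as $T\to\infty$, so the global-in-time linear estimate your Step 2 would need does not exist. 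Your second remedy (cutting off by $\psi(t)$) only gives $\lVert J_x^{-s}F_\rho\rVert_{L^2_xL^\infty_T}\lesssim\lVert f\rVert_{L^2}$ with $T=1$. That is weaker than \eqref{2.13}, and the trilinear proofs apply Lemma \ref{lem1.7} over all $t\in\mathbb{R}$.

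The missing idea is that the decay in $t$ has to come from the weight $\langle\lambda\rangle^{\rho}$, through an $\ell^2$ sum over unit time intervals. Assume without loss of generality $\frac12<\rho<1$, split by the sign of $\tau$, and write, e.g., $F_+=U_1(t)G$ with $\lVert G\rVert_{L^2_xH^\rho_t}\lesssim\lVert f\rVert_{L^2}$. Take $\psi\in C_0^\infty(\mathbb{R})$ with $\psi\equiv1$ on $[0,1]$, and set $\psi_k:=\psi(\cdot-k)$. Then
\[
\lVert J_x^{-s}F_+\rVert_{L^2_xL^\infty_t}^2\le\sum_{k\in\mathbb{Z}}\lVert J_x^{-s}(\psi_kF_+)\rVert_{L^2_xL^\infty_t}^2\lesssim\sum_{k\in\mathbb{Z}}\lVert\psi_kG\rVert_{L^2_xH^\rho_t}^2\lesssim\lVert G\rVert_{L^2_xH^\rho_t}^2 .
\]
The middle step is exactly your cut-off transference, applied to $\psi_kF_+=U_1(t)(\psi_kG)$, which vanishes outside an interval of fixed length around $k$. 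The constant is uniform in $k$ because $U_1(t)=U_1(t-k)U_1(k)$ and $U_1(k)$ is unitary on $L^2$. The last step is the standard localization bound $\sum_k\lVert\psi_kh\rVert_{H^\rho}^2\lesssim\lVert h\rVert_{H^\rho}^2$ for $0\le\rho\le1$.
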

	\begin{proof}
		Using the change of variable $\lambda _1=\tau -\phi \left( \xi \right)$, $\lambda _2=\tau +\phi \left( \xi \right)$, we write:\begin{equation*}
		\begin{aligned}
		F_{\rho}&=\int_{-\infty}^{+\infty}{\int_{-\infty}^{+\infty}{\frac{f\left( \xi ,\tau \right)}{\left< \left| \tau \right|-\phi \left( \xi \right) \right> ^{\rho}}e^{i\left( x\xi +t\tau \right)}d\xi d\tau}}\\
		&=\int_0^{+\infty}{\int_{-\infty}^{+\infty}{\frac{f\left( \xi ,\tau \right)}{\left< \tau -\phi\left( \xi \right) \right> ^{\rho}}e^{i\left( x\xi +t\tau \right)}d\xi d\tau}}+\int_{-\infty}^0{\int_{-\infty}^{+\infty}{\frac{f\left( \xi ,\tau \right)}{\left< \tau +\phi\left( \xi \right) \right> ^{\rho}}e^{i\left( x\xi +t\tau \right)}d\xi d\tau}}\\
		&=\int_{-\phi\left( \xi \right)}^{+\infty}{\int_{-\infty}^{+\infty}{\frac{f\left( \xi ,\lambda _1+\phi \left( \xi \right) \right)}{\left< \lambda _1 \right> ^{\rho}}e^{i\left( x\xi +t\phi \left( \xi \right) +t\lambda _1 \right)}d\xi d\lambda _1}}\\
		&+\int_{-\infty}^{\phi\left( \xi \right)}{\int_{-\infty}^{+\infty}{\frac{f\left( \xi ,\lambda _2-\phi \left( \xi \right) \right)}{\left< \lambda _2 \right> ^{\rho}}e^{i\left( x\xi -t\phi \left( \xi \right) +t\lambda _2 \right)}d\xi d\lambda _2}}\\
		&:=U_1\left( t \right) g_1+U_2\left( t \right) g_2,
		\end{aligned}
		\end{equation*}
		where $\widehat{g_1}=\int_{-\phi\left( \xi \right)}^{+\infty}{\frac{f\left( \xi ,\lambda _1+\phi\left( \xi \right) \right)}{\left< \lambda _1 \right> ^{\rho}}e^{it\lambda _1}d\lambda _1}$, $\widehat{g_2}=\int_{-\infty}^{\phi\left( \xi \right)}{\frac{f\left( \xi ,\lambda _2-\phi\left( \xi \right) \right)}{\left< \lambda _2 \right> ^{\rho}}e^{it\lambda _2}d\lambda _2}$. Then, for $\rho>\frac{1}{2}$, we can conclude that,\begin{equation*}
		\begin{aligned}
		&\lVert D_xP^nU_1\left( t \right) g_1 \rVert _{L_{x}^{\infty}L_{t}^{2}}\\
		\lesssim&\lVert g_1 \rVert _{L^2}\\
		\lesssim&\Bigg\lVert \int_{-\phi \left( \xi \right)}^{+\infty}{\left| \frac{f\left( \xi ,\lambda _1+\phi \left( \xi \right) \right)}{\left< \lambda _1 \right> ^{\rho}} \right|d\lambda _1} \Bigg\rVert _{L_{\xi}^{2}}\lesssim\lVert f\left( \xi ,\tau \right) \rVert _{L_{\xi}^{2}L_{\tau}^{2}}
		\end{aligned}
		\end{equation*}
		by \eqref{2.1}. Simiarly,
		$$\lVert D_xP^nU_2\left( t \right) g_2 \rVert _{L_{x}^{\infty}L_{t}^{2}}\lesssim\lVert f\left( \xi ,\tau \right) \rVert _{L_{\xi}^{2}L_{\tau}^{2}},$$
		which proves \eqref{2.11}, then, \eqref{2.12}, \eqref{2.13} can also be proved by the similar argument, thus we omit.
	\end{proof}
	The above conclusion requiers $\rho>\frac{1}{2}$, for later use, we will need to lower the requirement of $\rho$.
	\begin{lem}\label{lem2.10}
		For $\rho >\frac{2}{3}\left( 1-\frac{2}{p} \right)$, $2\le p\le8$, one has,
		\be\label{2.14}
		\lVert F_{\rho} \rVert _{L_{x}^{p}L_{t}^{p}}\lesssim\lVert f \rVert _{L_{\xi}^{2}L_{\tau}^{2}}.
		\ee
		Especially, for $\rho >\frac{2}{9}$, then,
		\be\label{2.15}
		\lVert F_{\rho} \rVert _{L_{x}^{3}L_{t}^{3}}\lesssim\lVert f \rVert _{L_{\xi}^{2}L_{\tau}^{2}}.
		\ee
		For $\rho >\frac{1}{3}$, then,
		\be\label{2.16}
		\lVert F_{\rho} \rVert _{L_{x}^{4}L_{t}^{4}}\lesssim\lVert f \rVert _{L_{\xi}^{2}L_{\tau}^{2}}.
		\ee
		For $\rho >\frac{4}{9}$, then,
		\be\label{2.17}
		\lVert F_{\rho} \rVert _{L_{x}^{6}L_{t}^{6}}\lesssim\lVert f \rVert _{L_{\xi}^{2}L_{\tau}^{2}}.
		\ee
	\end{lem}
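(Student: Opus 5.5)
The plan is to interpolate between two endpoint estimates for $F_\rho$: a trivial $L^2$ bound at $\rho=0$ and a Strichartz bound at $\rho>\frac12$.

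\textbf{Endpoint $\rho=0$.} By Plancherel, $\lVert F_0\rVert_{L^2_xL^2_t}=\lVert f\rVert_{L^2_\xi L^2_\tau}$.

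\textbf{Endpoint $\rho>\frac12$.} Write $F_\rho=U_1(t)g_1+U_2(t)g_2$ via the change of variables $\lambda_{1,2}=\tau\mp\phi(\xi)$ used in the proof of \textbf{Lemma} \ref{lem2.9}. Here the $\lambda$-integration is an outer superposition of free solutions weighted by $e^{it\lambda}\langle\lambda\rangle^{-\rho}$. Then apply \textbf{Lemma} \ref{lem2.6} with $p=q=8$; this is admissible, since $\frac{2}{8}=\frac13\left(1-\frac{2}{8}\right)=\frac14$. By Minkowski's inequality in $\lambda$ and Cauchy--Schwarz using $\langle\lambda\rangle^{-\rho}\in L^2_\lambda$ for $\rho>\frac12$, this gives
\[\lVert F_\rho\rVert_{L^8_xL^8_t}\lesssim\lVert f\rVert_{L^2_\xi L^2_\tau},\qquad \rho>\tfrac12 .\]
Since $L^8_tL^8_x=L^8_xL^8_t$, the order of the norms causes no trouble.

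\textbf{Interpolation.} To interpolate, consider the analytic family of operators
\[T_z f:=\left(\frac{f(\xi,\tau)}{\langle|\tau|-\phi(\xi)\rangle^{z\rho_1}}\right)^{\lor},\]
with $\rho_1>\frac12$ fixed. On $\operatorname{Re}z=0$, $T_z$ is bounded $L^2\to L^2$ with constant $1$, because the multiplier is unimodular. On $\operatorname{Re}z=1$, the factor $\langle\cdot\rangle^{-iy\rho_1}$ is unimodular and can be absorbed into $f$ without changing $\lVert f\rVert_{L^2}$, so $T_z$ is bounded $L^2\to L^8$ with a constant uniform in $y$; this gives admissible growth trivially. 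Stein's interpolation (\textbf{Lemma} \ref{lem2.2}; in the equal-exponent case ordinary complex interpolation suffices) then yields
\[\lVert F_{\theta\rho_1}\rVert_{L^{p}_xL^{p}_t}\lesssim\lVert f\rVert_{L^2},\qquad \frac1p=\frac{1-\theta}{2}+\frac{\theta}{8}.\]
This gives $\theta=\frac43\left(1-\frac2p\right)$, so the required weight is $\rho=\theta\rho_1$. Letting $\rho_1\downarrow\frac12$, any $\rho>\frac12\cdot\frac43\left(1-\frac2p\right)=\frac23\left(1-\frac2p\right)$ is attainable, which proves \eqref{2.14}. Since $\langle\cdot\rangle^{-\rho}$ decreases in $\rho$, larger $\rho$ follows by applying the estimate to $f\langle\cdot\rangle^{\rho'-\rho}$.

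\textbf{Special cases.} Substituting $p=3,4,6$ gives the thresholds $\frac29$, $\frac13$, $\frac49$, which are \eqref{2.15}--\eqref{2.17}.

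\textbf{Main obstacle.} The delicate point is the $\rho>\frac12$ endpoint. The $\lambda$-dependence sits in $e^{it\lambda}$ multiplying the free evolution, so $g_1,g_2$ must be treated as depending on the parameter $\lambda$, with Minkowski applied before \textbf{Lemma} \ref{lem2.6}, rather than as fixed functions as the earlier lemma's notation suggests. One must also check that the truncations $\lambda_1>-\phi(\xi)$ cause no harm; they do not, since they are pointwise cutoffs in $\lambda$ inside an $L^2$ bound.
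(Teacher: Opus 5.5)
Your proposal is correct and follows the paper's argument: you obtain an $L^8_xL^8_t$ bound for $\rho>\frac12$ from Lemma \ref{lem2.6} with $p=q=8$, then interpolate it against the Plancherel bound for $F_0$, with $\theta=\frac{4}{3}\left(1-\frac{2}{p}\right)$. Your explicit Minkowski/Cauchy--Schwarz treatment of the $\lambda$-superposition and your explicit analytic family $T_z$ make rigorous two steps that the paper only sketches, notably its $t$-dependent $g_1,g_2$.
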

	\begin{proof}
		By \textbf{Lemma} \ref{lem2.6}, take $p=q=8$ in \eqref{2.7}, one has,
		\be\label{2.18}
		\lVert U_j\left( t \right) g \rVert _{L_{x}^{8}L_{t}^{8}}\lesssim\lVert g \rVert _{L^2},
		\ee
		recalling the proof of \textbf{Lemma} \ref{lem2.7}, we can just repeat the argument and obtain that:
		\be\label{2.19}\lVert F_{\rho} \rVert _{L_{x}^{8}L_{t}^{8}}\lesssim\lVert U_1\left( t \right) g_1 \rVert _{L_{x}^{8}L_{t}^{8}}+\lVert U_2\left( t \right) g_2 \rVert _{L_{x}^{8}L_{t}^{8}}\lesssim\lVert f \rVert _{L_{\xi}^{2}L_{\tau}^{2}}
		\ee
		for $\rho>\frac{1}{2}$, then, let $Tf:=F_{\theta \rho}$, by interpolation\footnote{In the rest of the paper, it always means that we take advantage of \textbf{Lemma} \ref{lem2.2} when we use interpolation.} (\textbf{Lemma} \ref{lem2.2}) between \eqref{2.19} and the trivial estimate (Plancherel’s equality),
		\be\label{2.20}
		\lVert F_0 \rVert _{L_{x}^{2}L_{t}^{2}}\lesssim\lVert f \rVert _{L_{\xi}^{2}L_{\tau}^{2}},
		\ee
		we can obtain that, 
		$$\lVert F_{\theta \rho} \rVert _{L_{x}^{p_{\theta}}L_{t}^{q_{\theta}}}\lesssim\lVert f \rVert _{L_{\xi}^{2}L_{\tau}^{2}},$$
		where $\frac{1}{p_{\theta}}=\frac{1}{q_{\theta}}=\frac{1-\theta}{2}+\frac{\theta}{8},$ which is equivalent to $\theta =\frac{4}{3}\left( 1-\frac{2}{p_{\theta}} \right)$ and $\theta \rho =\frac{4}{3}\left( 1-\frac{2}{p_{\theta}} \right) \rho >\frac{2}{3}\left( 1-\frac{2}{p_{\theta}} \right) $. This gives \eqref{2.14}.
	\end{proof}
	In addition, we also need some other interpolations of these estimates. 
	\begin{lem}\label{lem2.11}
		For $\rho>\frac{1}{2}$, one has,
		\be\label{2.21}
		\lVert D_{x}^{\frac{1}{6}}P^nF_{\rho} \rVert _{L_{x}^{6}L_{t}^{6}}\lesssim\lVert f \rVert _{L_{\xi}^{2}L_{\tau}^{2}},
		\ee
		for $\rho>\frac{\theta}{2}$, $\theta\in[0,1]$, one has,
		\be\label{2.22}
		\lVert D_{x}^{\theta}P^nF_{\rho} \rVert _{L_{x}^{\frac{2}{1-\theta}}L_{t}^{2}}\lesssim\lVert f \rVert _{L_{\xi}^{2}L_{\tau}^{2}}.
		\ee
	\end{lem}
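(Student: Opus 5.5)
For \eqref{2.21}, interpolate between two estimates for $P^nF_{\rho}$ with $\rho>\frac12$: the smoothing estimate \eqref{2.11}, $\lVert D_xP^nF_{\rho}\rVert_{L^\infty_xL^2_t}\lesssim\lVert f\rVert_{L^2_{\xi}L^2_{\tau}}$, and the maximal estimate \eqref{2.12}, $\lVert D_x^{-\frac14}P^nF_{\rho}\rVert_{L^4_xL^\infty_t}\lesssim\lVert f\rVert_{L^2_{\xi}L^2_{\tau}}$. I use the analytic family $T_z f:=D_x^{(1-z)\cdot(-\frac14)+z\cdot 1}P^nF_{\rho}$ (the Kenig--Ponce--Vega device) and apply the mixed-norm Stein interpolation of \textbf{Lemma} \ref{lem2.2}. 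Taking $\theta$ as the weight on \eqref{2.11}, the exponents are
$$\frac1{p_\theta}=\frac{1-\theta}{4},\qquad \frac1{q_\theta}=\frac{\theta}{2}.$$
Setting these equal, $\frac{1-\theta}{4}=\frac{\theta}{2}$ gives $\theta=\frac13$, so $p_\theta=q_\theta=6$. The derivative index is then $-\frac14\cdot\frac23+1\cdot\frac13=\frac16$, which is exactly \eqref{2.21}.

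The imaginary powers $D_x^{iy}$ only contribute multipliers $|\xi|^{iy}$ of modulus one on the Fourier side. So at the endpoints $\mathrm{Re}\,z=0,1$ the operator $T_{iy}$ (resp.\ $T_{1+iy}$) is the endpoint operator applied to $f$ multiplied by a unimodular factor. The $L^2$ norm of $f$ is unchanged, and the bounds \eqref{2.11} and \eqref{2.12} hold uniformly in $y$. The constants $A_j(y)$ are therefore bounded, which satisfies the admissible growth condition.

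For \eqref{2.22}, interpolate in $\rho$ as well, between \eqref{2.11} at the endpoint $\rho_1>\frac12$ and Plancherel \eqref{2.20}, $\lVert F_0\rVert_{L^2_xL^2_t}\lesssim\lVert f\rVert_{L^2_{\xi}L^2_{\tau}}$. The projection $P^n$ is harmless in the latter. The family is
$$T_z f:=\Bigl(\frac{|\xi|^{z}\chi_{|\xi|\ge n}\,f}{\langle|\tau|-\phi(\xi)\rangle^{z\rho_1}}\Bigr)^{\lor},$$
which equals $P^nF_0$ at $z=0$ and $D_xP^nF_{\rho_1}$ at $z=1$. At parameter $\theta$ it gives $L^{p_\theta}_xL^2_t$ with $\frac1{p_\theta}=\frac{1-\theta}{2}+\frac{\theta}{\infty}$, i.e.\ $p_\theta=\frac{2}{1-\theta}$, the derivative $D_x^{\theta}$, and the weight exponent $\theta\rho_1$. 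Since $\rho_1$ can be any number larger than $\frac12$, $\theta\rho_1$ ranges over all values larger than $\frac{\theta}{2}$. For a given $\rho>\frac{\theta}{2}$ I choose $\rho_1=\rho/\theta>\frac12$ (the case $\theta=0$ is just Plancherel). For the smaller exponents in \eqref{2.21}, monotonicity in $\rho$ is automatic: $\langle\cdot\rangle^{-\rho}\le\langle\cdot\rangle^{-\rho'}$ for $\rho\ge\rho'$, so replacing $f$ by $f\langle\cdot\rangle^{\rho'-\rho}$ handles it.

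The main obstacle is checking the hypotheses of \textbf{Lemma} \ref{lem2.2} for these complex families. Each $T_z$ must be analytic with admissible growth on a dense class, here Schwartz $f$ with compact Fourier support away from $\xi=0$; this is guaranteed by $P^n$ with $n>\frac12$. The endpoint bounds \eqref{2.11}--\eqref{2.12} must also hold with the factors $|\xi|^{iy}$ and $\langle|\tau|-\phi\rangle^{-iy\rho_1}$ present. Both factors are unimodular multipliers that can be absorbed into $f$ without changing its $L^2$ norm, so the endpoint constants are independent of $y$ and the interpolation goes through.
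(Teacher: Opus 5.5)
Your proposal is correct and follows the paper's route. The paper obtains \eqref{2.21} by interpolating \eqref{2.11} with \eqref{2.12} (your choice $\theta=\frac13$ gives exactly $L^6_xL^6_t$ and $D_x^{1/6}$), and obtains \eqref{2.22} by interpolating \eqref{2.11} with Plancherel \eqref{2.20} at weight exponent $\theta\rho_1$, as in the proof of \textbf{Lemma} \ref{lem2.10}. Your version adds the explicit analytic families and the check that the unimodular factors leave the endpoint bounds uniform in $\operatorname{Im} z$, which the paper's one-line proof leaves implicit.
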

	\begin{proof}
		\eqref{2.21} follows by the interpolation of \eqref{2.11} and \eqref{2.12}, \eqref{2.22} is the straight rusult of the interpolation between \eqref{2.11} and \eqref{2.20}.
	\end{proof}
	
	\section{Studies on the IBVP}\label{section3}
	
	\subsection{Related linear estimates for IBVP}
	In this subsection, we will present some existing conclusions and related notations on IBVP. In the rest of the paper, we let $\eta \left( t \right) \in C_{0}^{\infty}\left( \mathbb{R} \right)$ be a cut-off function satisfying
	$$\eta \left( t \right) \equiv 1,\ t\in \left( -1,1 \right) ,\ supp\ \eta \subset \left( -2,2 \right) .$$
	First, we consider the linearized IVP as below:
	\be\label{3.1}
	\left\{ \begin{array}{l}
		u_{tt}-u_{xx}+\beta u_{xxxx}-u_{xxxxxx}=f_{xx}\left( x,t \right) ,\ x\in\mathbb{R},\quad t\ge0,\\
		u\left( x,0 \right) =\ \varphi \left( x \right) ,\ u_t\left( x,0 \right) =\ \psi ^{\prime\prime}\left(x\right),\\
	\end{array} \right.
	\ee
	a solution of \eqref{3.1} can be given by:
	$$u\left( x,t \right) =\left[ W_{\mathbb{R}}\left( \varphi ,\psi \right) \right]\left(x,t\right)+\int_0^t{\left[ W_{\mathbb{R}}\left( 0,f \right) \right] \left( x,t-\tau \right) d\tau},$$
	with
	$$\left[ W_{\mathbb{R}}\left( \varphi ,\psi \right) \right]\left(x,t\right):=\left[ V_1\left( \varphi \right) \right] \left( x,t \right) +\left[ V_2\left( \psi \right) \right] \left( x,t \right),$$
	$$\left[ V_1\left( \varphi \right) \right] \left( x,t \right) :=\frac{1}{2}\int_{\mathbb{R}}{\left( e^{it\phi \left( \xi \right) +ix\xi}+e^{-it\phi \left( \xi \right) +ix\xi} \right) \hat{\varphi}\left( \xi \right) d\xi},$$
	$$\left[ V_2\left( \psi \right) \right] \left( x,t \right) :=\frac{1}{2i}\int_{\mathbb{R}}{\left( e^{it\phi \left( \xi \right) +ix\xi}-e^{-it\phi \left( \xi \right) +ix\xi} \right) \frac{\xi ^2\hat{\psi}\left( \xi \right)}{\phi \left( \xi \right)}d\xi}.$$
	
	Next, we consider the related IBVP as below:
	\be\label{3.2}
	\left\{ \begin{array}{l}
		u_{tt}-u_{xx}+\beta u_{xxxx}-u_{xxxxxx}=f_{xx}\left(x,t \right),\quad x>0,\quad t>0\\
		u\left( x,0 \right) =\varphi \left( x \right) ,\ u_t\left( x,0 \right) =\psi ^{\prime\prime}\left( x \right) ,\\
		u\left( 0,t \right) =h_1\left( t \right) ,\ u_{xx}\left( 0,t \right) =h_2\left( t \right) ,\ u_{xxxx}\left( 0,t \right) =h_3\left( t \right) ,\\
	\end{array} \right.
	\ee
	a solution of \eqref{3.2} can be written as,
	$$u\left( x,t \right) =a\left( x,t \right) +c\left( x,t \right) +W_{bdr}\left( \vec{h}-\vec{a}-\vec{c} \right),$$
	where,
	\be\label{3.3}
	a\left( x,t \right) :=\left[ W_{\mathbb{R}}\left( \varphi ^*,\psi ^* \right) \right] \left( x,t \right),
	\ee
	\be\label{3.4}
	c\left( x,t \right) :=\int_0^t{\left[ W_{\mathbb{R}}\left( 0,f^* \right) \right] \left( x,t-\tau \right) d\tau},
	\ee
	$\varphi ^*$, $\psi ^*$ are the zero extension of $\varphi$, $\psi$: $\mathbb{R}^+\longrightarrow \mathbb{R}$, $f^{*}$ is an extension from $\mathbb{R}^{+}$ to $\mathbb{R}$ and,
	\be\label{3.5}
	\vec{a}\left( x,t \right) :=\left( a_1,a_2,a_3 \right) =\left( a,a_{xx},a_{xxxx} \right) \left( 0,t \right),
	\ee
	\be\label{3.6}
	\vec{c}\left( x,t \right) :=\left( c_1,c_2,c_3 \right) =\left( c,c_{xx},c_{xxxx} \right) \left( 0,t \right),
	\ee
	with $W_{bdr}$ being the boundary integral operator, for definition $W_{bdr}$ and the details on the derivation of the representation, readers can refer \cite{li2019wellposedness,li2020low,li2022lower}, we omit here for simplicity.\par 
	The following estimates \textbf{Lemma} \ref{lem 3.3}, \ref{lem 3.4} come from \cite{li2018non,li2019wellposedness,li2020low,li2022lower}.
	\begin{lem}\label{lem 3.3}
		Let $s\in\mathbb{R}$, $\frac{1}{2}<\sigma<1$, $0<T\le1$. Then there exists a constant $C=C(s,\sigma)$ such that,
		$$\|W_{\mathbb{R}}\left( \varphi ,\psi \right)\|_{H^s(\mathbb{R})}+\lVert  W_{\mathbb{R}}\left( \varphi ,\psi \right) \rVert _{X^{s,\frac{1}{2},\sigma}}\le C\left( \lVert \varphi \rVert _{H^s\left( \mathbb{R} \right)}+\lVert \psi \rVert _{H^{s-1}\left( \mathbb{R} \right)} \right),$$
		$$\Big\lVert \eta \left( \frac{t}{T}\right) \int_0^t{W_{\mathbb{R}}\left( 0,f \right) \left( x,t-\tau \right) d\tau}  \Big\rVert _{X^{s,\frac{1}{2},\sigma}}\le C \Big\lVert \left( \frac{\xi ^2\hat{f}}{\phi \left( \xi \right)} \right) ^{\lor} \Big\rVert  _{X^{s,\sigma -1}}.$$
	\end{lem}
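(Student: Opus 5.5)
The plan is to work entirely on the Fourier side. I use the two-branch structure $\tau\approx\pm\phi(\xi)$ and the elementary inequality $\left<|\tau|-\phi(\xi)\right>\le \left<\tau\mp\phi(\xi)\right>$, together with the fact that $|\xi|\le 1$ forces $\phi(\xi)$ to be bounded. I read the free evolution as localized in time by the cut-off $\eta(t)$; the quotient spaces $X_T$ only see $t\in(0,T)$ with $T\le1$, where $\eta\equiv1$. Writing $W_{\mathbb{R}}(\varphi,\psi)$ as a combination of $U_1(t)g$ and $U_2(t)g$, the data are $g=\varphi$ or $g=(\xi^2\hat\psi/\phi)^{\vee}$.

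\textbf{Free evolution.} Unitarity of $U_j(t)$ on every $H^s$ gives the $H^s$ bound at each fixed time. The multiplier $\xi^2/\phi(\xi)$ is $\lesssim\left<\xi\right>^{-1}$ for large $|\xi|$ and $\sim|\xi|$ near $0$, since the true symbol $\sqrt{\xi^6+k\xi^4+\xi^2}\sim|\xi|$ there. Hence the $\psi$-part costs exactly $\lVert\psi\rVert_{H^{s-1}}$. Next, the space--time transform of $\eta(t)U_1(t)g$ is $\hat\eta(\tau-\phi(\xi))\hat g(\xi)$. Therefore
$$\lVert \eta U_1g\rVert_{X^{s,\frac12}}\le \lVert\left<\xi\right>^s\hat g\rVert_{L^2}\,\lVert\left<\lambda\right>^{\frac12}\hat\eta(\lambda)\rVert_{L^2_\lambda}.$$
For the $\varLambda_\sigma$ part, on $|\xi|\le1$ we have $\left<\tau\right>^\sigma\lesssim\left<\tau-\phi(\xi)\right>^\sigma$, and $\hat\eta$ is Schwartz, so this piece is also bounded by $\lVert g\rVert_{L^2}$. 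The $U_2$ term is identical.

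\textbf{Duhamel term.} Put $\hat F:=\xi^2\hat f/\phi$. The Duhamel integral splits into the two branches $e^{\pm it\phi(\xi)}$. For each branch I use the Kenig--Ponce--Vega type representation
$$\eta(t/T)\int e^{ix\xi}\hat F(\xi,\tau')\,\frac{e^{it\tau'}-e^{\pm it\phi(\xi)}}{i(\tau'\mp\phi(\xi))}\,d\xi\, d\tau',$$
where $\hat F$ is restricted to the sign of $\tau'$ matching the branch. I then split into the near-resonant region $|\tau'\mp\phi|\le1$ and its complement.

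On the near region I Taylor expand $e^{it(\tau'\mp\phi)}-1=\sum_k \frac{(it)^k(\tau'\mp\phi)^k}{k!}$. This produces a summable series of free solutions $t^k\eta(t/T)U_j(t)g_k$ with $\lVert g_k\rVert_{H^s}\lesssim\lVert F\rVert_{X^{s,\sigma-1}}$, and these are handled by the first step.

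On the far region two pieces remain. The first is $\eta(t/T)(\hat F/(\tau'\mp\phi))^{\vee}$; its $X^{s,\frac12}$ norm is controlled by the $X^{s,-\frac12}$ norm of $F$, hence by the $X^{s,\sigma-1}$ norm, since $\sigma-1>-\frac12$. The multiplication by $\eta(t/T)$ is handled by the standard bound $\lVert\eta(t/T)u\rVert_{X^{s,b}}\lesssim\lVert u\rVert_{X^{s,b}}$, uniform for $T\le1$ and $|b|\le\frac12$. The second piece is a free solution with data
$$\hat g(\xi)=\int_{|\lambda|>1}\frac{\hat F(\xi,\lambda\pm\phi)}{\lambda}\,d\lambda,$$
and Cauchy--Schwarz gives $|\hat g(\xi)|\le\lVert\left<\lambda\right>^{\sigma-1}\hat F\rVert_{L^2_\lambda}\lVert\left<\lambda\right>^{-\sigma}\rVert_{L^2_\lambda}$. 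This is exactly where $\sigma>\frac12$ is needed.

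For the $\varLambda_\sigma$ norm of the non-oscillating piece, on $|\xi|\le1$ I bound
$$\left<\tau\right>^{\sigma}\left<\tau-\phi\right>^{-1}\lesssim \left<\tau-\phi\right>^{\sigma-1},$$
again because $\phi$ is bounded there.

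\textbf{Main obstacle.} I expect the hardest part to be the bookkeeping between the weight $\left<|\tau|-\phi\right>$ and the one-sided weights $\left<\tau\mp\phi\right>$ near $\tau=0$. There, the wrong branch has $\left<\tau\pm\phi\right>\sim\left<\phi\right>$ large while $\left<|\tau|-\phi\right>$ is not, so I will need to check that the cross terms only help. The other delicate point is keeping the dependence on $T$ uniform for $0<T\le1$ in the $\varLambda_\sigma$ part, where $\left<\tau\right>^\sigma$ with $\sigma>\frac12$ interacts with the rescaled cut-off $\eta(t/T)$. For that I would first try to use the fact that this part lives only on $|\xi|\le1$, and treat it as a purely time-Sobolev estimate for $H^\sigma_t$ functions.
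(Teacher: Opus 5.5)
The paper gives no proof of this lemma and imports it from its earlier works. Your decomposition (two branches, near/far split in the modulation, Taylor expansion near resonance, Cauchy--Schwarz using $\sigma>\frac12$ for the boundary term) is the standard Kenig--Ponce--Vega route.

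The branch bookkeeping you call the main obstacle is actually harmless. For the true symbol $\phi\ge0$ one has $\langle|\tau|-\phi\rangle\le\langle\tau\mp\phi\rangle$. The input exponent $\sigma-1$ is negative and the output exponents are positive; on $|\xi|\le1$ also $\langle\tau\rangle\sim\langle\tau\mp\phi\rangle$. So each branch can be estimated against the whole $F$ in a one-sided norm, with no restriction on the sign of $\tau'$. For fixed $\xi$, everything reduces to $\lVert\eta(t/T)\int_0^tG\rVert_{H^b_t}\lesssim\lVert G\rVert_{H^{\sigma-1}_t}$, with $b=\frac12$, and also $b=\sigma$ when $|\xi|\le1$.

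The genuine gap is the uniformity of the constant in $0<T\le1$.

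First, the bound $\lVert\eta(t/T)u\rVert_{X^{s,b}}\lesssim\lVert u\rVert_{X^{s,b}}$ holds uniformly in $T$ only for $|b|<\frac12$. It fails at $b=\frac12$, which is exactly where you use it: a time profile like $\log\log(1/|t|)$ near $t=0$ lies in $H^{1/2}$, but $\lVert\eta(t/T)h\rVert_{H^{1/2}}\to\infty$ as $T\to0$. This step can be repaired. Your non-oscillating piece is actually in $H^\sigma$ in the modulation variable, and $\lVert\eta(t/T)h\rVert_{H^{1/2}}\lesssim\lVert h\rVert_{H^\sigma}$ uniformly for $\sigma>\frac12$.

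The real problem is the $\varLambda_\sigma$ component.
- Your bound $\langle\tau\rangle^\sigma\langle\tau-\phi\rangle^{-1}\lesssim\langle\tau-\phi\rangle^{\sigma-1}$ treats the piece without the cut-off, while $\lVert\eta(\cdot/T)\rVert_{\dot H^\sigma}=T^{\frac12-\sigma}\lVert\eta\rVert_{\dot H^\sigma}$.
- Neither far piece vanishes at $t=0$; they cancel there only in sum. Bounding the non-oscillating piece and the free solution with data $\hat g$ separately therefore gives a constant of size $T^{\frac12-\sigma}$, which blows up.
- Localizing to $|\xi|\le1$ does not change this.

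The missing idea is to use that the Duhamel term vanishes at $t=0$. Run your argument with the fixed cut-off $\eta(t)$. For $|\xi|\le1$ and each branch this gives $h(t)=\eta(t)\int_0^te^{\pm i(t-t')\phi(\xi)}\hat F^{x}(\xi,t')\,dt'\in H^\sigma_t$ with $h(0)=0$. For $T\le\frac12$ (larger $T$ is harmless) one has $\eta(t/T)=\eta(t/T)\eta(t)$, and the Gagliardo seminorm gives
\[
\lVert\eta(\cdot/T)h\rVert_{\dot H^\sigma}^2\lesssim\lVert h\rVert_{\dot H^\sigma}^2+\int_{\mathbb{R}}|h(t)|^2\min\left\{T^{-2\sigma},\,T|t|^{-1-2\sigma}\right\}dt .
\]
Since $h(0)=0$ and $H^\sigma\subset C^{0,\sigma-\frac12}$, we have $|h(t)|\lesssim|t|^{\sigma-\frac12}\lVert h\rVert_{H^\sigma}$. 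This makes the last integral $O(\lVert h\rVert_{H^\sigma}^2)$ uniformly in $T$.

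Your near-resonant Taylor terms need no change, since each carries a factor $t^k$ with $k\ge1$.
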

	\begin{lem}\label{lem 3.4}
		For $s\in\mathbb{R}$, there exists a $C=C(s)$ such that for any $\sigma\in\left(\frac{1}{2},1\right)$, one has,
		$$\underset{x\ge 0}{sup}\lVert \partial _{x}^{j}\left[ \eta \left( t \right) W_{bdr}\left( \vec{h} \right) \right] \left( x,\cdot \right) \rVert _{{H}^{\frac{s-j+1}{3}}\left( \mathbb{R}^+ \right)}\le C\lVert \vec{h} \rVert _{{\mathcal{H}}^s\left( \mathbb{R}^+ \right)},$$
		$$\underset{t\ge 0}{sup}\lVert \left[ \eta \left( t \right) W_{bdr}\left( \vec{h} \right) \right] \left( \cdot ,t \right) \rVert _{H^s\left( \mathbb{R}^+ \right)}\le C\lVert \vec{h} \rVert _{{\mathcal{H}}^s\left( \mathbb{R}^+ \right)},$$
		$$\lVert \left[ \eta \left( t \right) W_{bdr}\left( \vec{h} \right) \right] \left( \cdot ,t \right) \rVert _{X^{s,\frac{1}{2},\sigma}}\le C\lVert \vec{h} \rVert _{{\mathcal{H}}^s\left( \mathbb{R}^+ \right)},$$
		where $j=1,2,...,5.$
	\end{lem}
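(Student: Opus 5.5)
The plan is to make the boundary operator explicit and then estimate it piece by piece. By construction, $W_{bdr}(\vec h)$ solves the linear homogeneous problem \eqref{3.2} with $f\equiv0$, $\varphi=\psi=0$ and boundary data $\vec h$. First, I will take the Laplace transform in $t$ (parameter $\lambda$ with $\mathrm{Re}\,\lambda>0$). This turns the equation into the ODE $-\tilde u^{(6)}+\beta\tilde u^{(4)}-\tilde u''+\lambda^2\tilde u=0$ on $x>0$. Its characteristic polynomial $-\mu^6+\beta\mu^4-\mu^2+\lambda^2=0$ has exactly three roots $\mu_1,\mu_2,\mu_3$ with $\mathrm{Re}\,\mu_j<0$, and the bounded solution is $\tilde u=\sum_{j}c_j(\lambda)e^{\mu_j(\lambda)x}$. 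The coefficients $c_j$ come from Cramer's rule applied to the Vandermonde-type system with rows $\mu_j^0,\mu_j^2,\mu_j^4$ and right-hand side $(\tilde h_1,\tilde h_2,\tilde h_3)$. I will then deform the inverse Laplace contour onto the imaginary axis, $\lambda=i\rho$, and substitute $\rho=\pm\phi(\xi)$. This is legitimate for $|\xi|\ge n$ by invertibility of $\phi_1$; the bounded low-frequency part is handled separately and trivially. The outcome is that $W_{bdr}(\vec h)$ becomes a finite sum of two kinds of term:
\begin{itemize}
\item oscillatory terms $\int e^{\pm it\phi(\xi)+ix\xi}\,m_k(\xi)\,\hat h_k(\phi(\xi))\,\phi'(\xi)\,d\xi$, coming from the purely imaginary root;
\item decaying terms of the same shape with $e^{ix\xi}$ replaced by $e^{-(c\pm i c')|\xi|x}$, coming from the two complex roots.
\end{itemize}
Here the multipliers $m_k$ are homogeneous, of order $0,-2,-4$ respectively.

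With this representation, the first estimate (the $\sup_x$ bound in time-Sobolev norm) follows from Plancherel in $t$ after the change of variables back to $\rho$. We have $|\xi|\sim\langle\rho\rangle^{1/3}$, the decaying factors are bounded by $1$ uniformly in $x\ge0$, and each $\partial_x$ costs one power of $|\xi|\sim\langle\rho\rangle^{1/3}$. This accounts exactly for the shift $\frac{s-j+1}{3}$ in the indices of $\mathcal H^s$.

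For the second estimate ($\sup_t H^s(\mathbb R^+)$) there are two cases:
\begin{itemize}
\item For an oscillatory term, I will view it as $U_1(t)$ or $U_2(t)$ applied to the function $g$ with $\hat g=m_k\hat h_k(\phi)\phi'$, and use unitarity. The Jacobian $\phi'\sim\xi^2$ turns $\|\langle\xi\rangle^s\hat g\|_{L^2}$ into $\|h_k\|_{H^{(s+1-2(k-1))/3}}$.
\item For a decaying term, I will fix $t$ and regard the map $\hat g\mapsto\int e^{-c|\xi|x}\hat g(\xi)\,d\xi$ as an operator with kernel homogeneous of degree $-1$ in $(x,\xi)$. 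Such an operator is bounded $L^2\to L^2(\mathbb R^+)$ by the Hardy/Schur test, after commuting $\langle\xi\rangle^s$ through for the range of $s$ considered; for larger $s$ I will differentiate in $x$ and use the first estimate.
\end{itemize}

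The third estimate, the $X^{s,\frac12,\sigma}$ bound for $\eta(t)W_{bdr}(\vec h)$, is where I expect the main difficulty. Again there are two cases:
\begin{itemize}
\item For the oscillatory terms, multiplying by $\eta(t)$ smears the measure on $\tau=\pm\phi(\xi)$ by $\hat\eta$. The weight $\langle|\tau|-\phi(\xi)\rangle^{1/2}$ is then absorbed by the rapid decay of $\hat\eta$, and the $\Lambda_\sigma$ part ($|\xi|\le1$) is harmless since there $\langle\tau\rangle$ is controlled by $\langle\tau-\phi\rangle$.
\item For the decaying terms, I will extend by zero (or evenly) to $x<0$ and compute the spatial Fourier transform $\int_0^\infty e^{-c|\xi|x-ix\zeta}dx=(c|\xi|+i\zeta)^{-1}$. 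This leads to bounding $\int\!\!\int\langle\zeta\rangle^{2s}\langle|\tau|-\phi(\zeta)\rangle\,|\xi|^2\big|(c|\xi|+i\zeta)^{-1}\big|^2|\hat h(\phi(\xi))|^2\phi'(\xi)^2\,d\zeta\,d\tau$, with $\tau$ localized near $\phi(\xi)$.
\end{itemize}
For the decaying terms I will split into $|\zeta|\lesssim|\xi|$, where $\langle|\tau|-\phi(\zeta)\rangle\lesssim|\xi|^3$ and the $\zeta$-integral over a length $\sim|\xi|$ interval gives the right power, and $|\zeta|\gg|\xi|$, where $\langle\phi(\zeta)\rangle\sim|\zeta|^3$ against $|\zeta|^{-2}$ forces a restriction on $s$. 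In the second region I would first try replacing the sharp cutoff $\chi_{x>0}$ by a smooth extension of $e^{-c|\xi|x}$ to gain decay in $\zeta$. The constraint on $\sigma<1$ enters only through the $\Lambda_\sigma$ part and is mild. Throughout, I will treat $\beta=-1$ and take $T\le1$, which is consistent with the convention fixed earlier in the paper.
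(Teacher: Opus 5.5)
Your architecture is the right one, but there is a genuine gap in the $X^{s,\frac{1}{2},\sigma}$ bound for the boundary-layer pieces.

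\textbf{What holds up.} The paper gives no argument for this lemma; it imports it from the earlier Li--Chen--Yang--Zhang work on the quadratic sixth-order Boussinesq IBVP, where $W_{bdr}$ is built from the same Laplace-transform representation you describe. That representation has one imaginary root giving a dispersive piece, two roots with $\mathrm{Re}\,\mu_{2,3}\sim-|\xi|$ giving boundary-layer pieces, and Cramer coefficients of orders $0,-2,-4$. The first estimate, and the dispersive parts of the other two, go through as you say. Two small corrections:
\begin{itemize}
\item The kernel $e^{-x\xi}$ is not homogeneous of degree $-1$. The Schur test with weights $x^{-1/2},\xi^{-1/2}$ (or Hilbert's inequality for its square) still gives the $L^2(\mathbb{R}^+)$ bound.
\item For larger $s$ you should re-apply that bound to $\partial_x^k$ of the boundary-layer term, not the first estimate.
\end{itemize}

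\textbf{The gap.} It sits exactly where you call the $\Lambda_\sigma$ part mild. Since the norm is a restriction norm, everything depends on how $e^{\mu(\xi)x}=e^{-a(\xi)\xi x}$ (with $\mathrm{Re}\,a>0$) is continued to $x<0$.
\begin{itemize}
\item Zero extension fails for every $s\ge-1$, as your own $|\zeta|\gg|\xi|$ count shows.
\item A generic smooth continuation $\Theta_a(\xi x)$, with $\Theta_a$ Schwartz and $\Theta_a(y)=e^{-ay}$ for $y\ge0$, has spatial transform $\xi^{-1}\widehat{\Theta_a}(\zeta/\xi)\approx\xi^{-1}\widehat{\Theta_a}(0)$ on $|\zeta|\le1$. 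There $\langle\tau\rangle\sim|\xi|^3$.
\item For $\hat h_1$ spread over $\rho\sim N^3$, this gives a $\Lambda_\sigma$ contribution of size $N^{3\sigma-1}\|\hat h_1\|_{L^2}$, against the data norm $N^{s+1}\|\hat h_1\|_{L^2}$. The same balance occurs for $h_2,h_3$.
\end{itemize}
So unless $\widehat{\Theta_a}(0)=0$ you only get $\sigma\le\frac{s+2}{3}$. This excludes every $\sigma>\frac{1}{2}$ when $s\le-\frac{1}{2}$, and already excludes $\sigma>\frac{2}{3}$ at $s=0$. The lemma, however, asserts all $\sigma\in(\frac{1}{2},1)$ and all $s\in\mathbb{R}$.

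The same low-frequency mass causes two further failures when $s$ is low:
\begin{itemize}
\item It breaks your region $|\zeta|\lesssim|\xi|$ for $s<-\frac{1}{2}$: there $\int_{|\zeta|\lesssim|\xi|}\langle\zeta\rangle^{2s}\,d\zeta\sim1$, not $|\xi|^{2s+1}$.
\item It breaks your second estimate for $s\le-\frac{1}{2}$, where ``commuting $\langle\xi\rangle^s$ through'' no longer works.
\end{itemize}

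\textbf{The missing idea.} Choose the continuation with vanishing moments: $\int_{\mathbb{R}}y^k\Theta_a(y)\,dy=0$ for $k<M$. This is arranged by adding bumps supported in $y<0$, uniformly for $a$ in the compact set swept out by $-\mu_{2,3}(\xi)/\xi$. Then $|\widehat{\Theta_a}(y)|\lesssim\min(|y|^M,|y|^{-M})$. With $M=M(s,\sigma)$ large, the regions $|\zeta|\le1$ and $|\zeta|\lesssim|\xi|$ become harmless, and $|\zeta|\gg|\xi|$ is handled by the decay of $\widehat{\Theta_a}$. Holmer's analytic family of boundary forcing operators plays the same role.

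Without this device your plan proves only the range $s>-\frac{1}{2}$, $\frac{1}{2}<\sigma\le\frac{s+2}{3}$. That weaker range happens to suffice for Theorem \ref{thm1.5}, where $\sigma$ is taken close to $\frac{1}{2}$, but it is not the lemma as stated.
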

	\begin{lem}\label{lem 3.5}
		For any $s\in\mathbb{R}$ and $\frac{1}{2}<\sigma\le1$, there exists a constant $C=C\left(s,\sigma\right)$ such that
		\begin{equation*}
		\underset{x\ge 0}{sup}\lVert \eta \left( t \right) \int_0^t{\partial _{x}^{j}\left[ W_{\mathbb{R}}\left( 0,f \right) \right] \left( x,t-\tau \right) d\tau} \rVert _{H_{t}^{\frac{s-j+1}{3}}\left( \mathbb{R} \right)}\lesssim\Big\lVert \left( \frac{\xi ^2\hat{f}}{\phi \left( \xi \right)} \right) ^{\lor} \Big\rVert _{X^{s,\sigma -1}}+\Big\lVert \left( \frac{\xi ^2\hat{f}}{\phi \left( \xi \right)} \right) ^{\lor} \Big\rVert _{Z^{s,\sigma -1}},
		\end{equation*}
		where $j=0,1,2,3,4,5$.
	\end{lem}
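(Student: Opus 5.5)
We plan to work with the Duhamel term through its space–time Fourier representation, split it into a ``free'' part and a ``forced'' part, and estimate the traces $\partial_x^j(\cdot)(x,\cdot)$ in $H_t^{\frac{s-j+1}{3}}$ uniformly in $x$ for each. Set $\hat{w}(\xi,\lambda):=\frac{\xi^2\hat f(\xi,\lambda)}{\phi(\xi)}$, so that the right-hand side of the lemma is $\lVert w\rVert_{X^{s,\sigma-1}}+\lVert w\rVert_{Z^{s,\sigma-1}}$. Writing $f$ by its Fourier transform in $t$ and integrating $\int_0^t e^{\pm i(t-\tau')\phi(\xi)}e^{i\tau'\lambda}d\tau'$ explicitly, the Duhamel term becomes (up to constants and a sum over the two signs)
\[
\int_{\mathbb{R}^2}e^{ix\xi}\,\frac{e^{it\lambda}-e^{\pm it\phi(\xi)}}{\lambda\mp\phi(\xi)}\,\hat w(\xi,\lambda)\,d\xi\, d\lambda .
\]
We then insert a smooth partition of unity $\psi(\lambda\mp\phi(\xi))+(1-\psi(\lambda\mp\phi(\xi)))$ with $\psi$ supported in $|\cdot|\le 1$. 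In the near-resonant part we Taylor expand $\frac{e^{it(\lambda\mp\phi)}-1}{\lambda\mp\phi}=\sum_{k\ge1}\frac{(it)^k(\lambda\mp\phi)^{k-1}}{k!}$; multiplied by $\eta(t)$, each term is $\eta(t)\frac{t^k}{k!}$ times a free solution $U_{1,2}(t)g_k$ with $\hat g_k(\xi)=\int\psi(\lambda\mp\phi)(\lambda\mp\phi)^{k-1}\hat w\,d\lambda$. In the nonresonant part we separate into a free solution with datum $\hat g(\xi)=\int\frac{1-\psi(\lambda\mp\phi)}{\lambda\mp\phi}\hat w\,d\lambda$ and the forced term $\int e^{ix\xi+it\lambda}\frac{1-\psi}{\lambda\mp\phi}\hat w\,d\xi d\lambda$.

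For all free pieces, the key tool is a Kato-type smoothing estimate
\[
\sup_x\lVert \eta(t)\,\partial_x^j U_{1,2}(t)g\rVert_{H_t^{\frac{s-j+1}{3}}}\lesssim \lVert g\rVert_{H^s}.
\]
We prove it as in Lemma \ref{lem2.3}: for $|\xi|\ge n$ we change variables $\mu=\phi_1(\xi)$, so that $d\mu\sim\xi^2d\xi$, $\langle\mu\rangle\sim\langle\xi\rangle^3$, and Plancherel in $t$ gives the weight $|\xi|^{2j}\langle\xi\rangle^{2(s-j+1)}/\xi^2\sim\langle\xi\rangle^{2s}$. For $|\xi|\le n$ the frequencies are bounded, and the $t$-cutoff $\eta$ handles this part trivially. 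Then $\lVert g\rVert_{H^s}$ and $\lVert g_k\rVert_{H^s}$ are bounded by $\lVert w\rVert_{X^{s,\sigma-1}}$ via Cauchy–Schwarz in $\lambda$. For $g$ this uses $\int\langle\lambda\mp\phi\rangle^{-2\sigma}d\lambda<\infty$ since $\sigma>\frac12$, and $\langle\lambda\mp\phi\rangle\sim\langle|\lambda|-\phi\rangle$ on the relevant half-line of $\lambda$. For $g_k$ the $\lambda$-support is bounded. The series in $k$ is summable because $\lVert\eta(t)t^k h\rVert_{H^r_t}\lesssim C^k\lVert \eta h\rVert_{H^r_t}$.

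The main obstacle is the forced term. Its time Fourier transform at fixed $x$ is $\int e^{ix\xi}\xi^j\frac{(1-\psi)\hat w(\xi,\lambda)}{\lambda\mp\phi(\xi)}d\xi$, and we must bound
\[
\int\langle\lambda\rangle^{\frac{2(s-j+1)}{3}}\Big|\int \xi^j\frac{\hat w}{\langle\lambda\mp\phi\rangle}d\xi\Big|^2d\lambda .
\]
Applying Cauchy–Schwarz in $\xi$ against the weight $\langle\xi\rangle^{2s}\langle\lambda\mp\phi\rangle^{2(\sigma-1)}$ reduces this to showing that
\[
\sup_\lambda\int\frac{\langle\lambda\rangle^{\frac{2(s-j+1)}{3}}|\xi|^{2j}}{\langle\xi\rangle^{2s}\langle\lambda\mp\phi\rangle^{2\sigma}}\,d\xi<\infty
\]
in the region $|\xi|^3\sim|\lambda|$. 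There the numerator is $\sim\xi^2$ and the substitution $\mu=\phi(\xi)$ makes the integral $\int\langle\lambda-\mu\rangle^{-2\sigma}d\mu<\infty$. In the complementary region $|\xi|^3\not\sim|\lambda|$ we have $\langle\lambda\mp\phi\rangle\gtrsim\max(\langle\lambda\rangle,\langle\xi\rangle^3)$, so the powers of $\xi$ can be traded for powers of $\langle\lambda\rangle$. However, when $s$ is large or $j$ is small the $\xi$-weight may fail to close. This is exactly where we instead use the $Z^{s,\sigma-1}$ weight $\langle\lambda\rangle^{\frac s3+\frac12-\sigma}\langle|\lambda|-\phi\rangle^{\sigma-1}$: Cauchy–Schwarz against it leaves $\int|\xi|^{2j}\langle\lambda\rangle^{\frac{2(1-j)}{3}-1+2\sigma}\langle\lambda\mp\phi\rangle^{-2\sigma}d\xi$, which is bounded by splitting $|\xi|\lesssim|\lambda|^{1/3}$ from $|\xi|\gg|\lambda|^{1/3}$ and using $\sigma>\frac12$.

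I expect the bookkeeping of exponents across the six values of $j$, with the choice between the $X$ and $Z$ weights in each subregion, to be the delicate step. My first attempt would be to use $X^{s,\sigma-1}$ when $|\xi|\ge|\lambda|^{1/3}/2$ and $Z^{s,\sigma-1}$ otherwise.
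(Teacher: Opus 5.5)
Your overall scheme is the standard Colliander--Kenig/Holmer argument that the paper cites without proof: Fourier representation of the Duhamel term, cutoff in $\lambda\mp\phi$, Taylor expansion near resonance, Kato smoothing for the free pieces, and Cauchy--Schwarz against the $X^{s,\sigma-1}$ or $Z^{s,\sigma-1}$ weight for the forced piece. Your treatment of the free pieces is fine for every $s$ and $j$.

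The gap is the forced piece on the region $|\xi|^3\gg|\lambda|$, $|\xi|\gg1$, which you never actually compute. There $\langle\lambda\mp\phi\rangle\sim\langle|\lambda|-\phi\rangle\sim\langle\xi\rangle^3$, so no power of $\xi$ can be traded for decay in $\lambda$.
\begin{itemize}
\item With the $X$-weight your integrand becomes $\langle\lambda\rangle^{\frac{2(s-j+1)}{3}}|\xi|^{2j}\langle\xi\rangle^{-2s-6\sigma}$. This is integrable at $\xi=\infty$ only if $s>j+\frac12-3\sigma$.
\item With the $Z$-weight it becomes $\langle\lambda\rangle^{\frac{2(1-j)}{3}-1+2\sigma}|\xi|^{2j}\langle\xi\rangle^{-6\sigma}$. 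This is integrable only if $j<3\sigma-\frac12$.
\end{itemize}
So in this region the $X$-weight fails when $s$ is small or $j$ is large. That is the opposite of your diagnosis, which concerns $|\xi|^3\ll|\lambda|$, where $Z$ does work and gives exactly $\langle\lambda\rangle^{0}$. Moreover, $Z$ cannot rescue the region $|\xi|^3\gg|\lambda|$ for $j\ge3$, nor for $j=2$ when $\sigma\le\frac56$. Your proposed split ($X$ on $|\xi|\ge|\lambda|^{1/3}/2$) therefore yields the estimate only for $s>j+\frac12-3\sigma$. Putting $Z$ on $|\xi|^3\gg|\lambda|$ as well covers $j=0,1$ for all $s$, but nothing more.

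No reallocation of weights can close this, because the statement itself fails there. Take $\hat w(\xi,\lambda)=\chi_{[0,1]}(\lambda)\chi_{[N,2N]}(\xi)$ with $N\gg1$.
\begin{itemize}
\item On the support $|\lambda\mp\phi|\sim N^3$, so there is no near-resonant piece.
\item The free piece $\eta(t)\partial_x^jU_{1,2}(t)g$ lives at time frequencies $\sim N^3$.
\item At $x=0$ the forced piece has time Fourier transform $\approx c\,\chi_{[0,1]}(\lambda)\int_N^{2N}\xi^j\phi(\xi)^{-1}d\xi$. This has size $N^{j-2}$, with no cancellation.
\end{itemize}
Hence the left side is $\gtrsim N^{j-2}$, while $\|w\|_{X^{s,\sigma-1}}+\|w\|_{Z^{s,\sigma-1}}\sim N^{3\sigma-\frac52}(1+N^{s})$. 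The ratio is $N^{j+\frac12-3\sigma-\max(s,0)}$, which is unbounded, for example, for $j=2$, $s\le0$, $\sigma<\frac56$, and for $j=4$, $s=-\frac25$, every $\sigma\le1$.

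So the lemma as stated (all $s\in\mathbb{R}$, $j=0,\dots,5$) cannot be proved. It must be restricted to $j$-dependent ranges of $s$, as the corresponding KdV time-trace estimates are. In particular it is not available for the traces $j=2,4$ near $s=-\frac12$, which is where the paper needs them. The paper's remark on the admissible ranges for $j=3,4,5$ is also inconsistent with this.
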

	\begin{proof}
		The proof is standard, similar results can be found in \cite{cavalcante2017initial,colliander2002generalized,li2022non,holmer2006initial}, thus we omit the proof for simplicity. But we point out that if $j=0,3$, then when $-1\le s\le3\sigma-1$, the term $\Big\lVert \left( \frac{\xi ^2\hat{f}}{\phi \left( \xi \right)} \right) ^{\lor} \Big\rVert _{Z^{s,\sigma -1}}$ is not necessary; if $j=1,4$, $0\le s\le3\sigma$ or $j=2,5$, $1\le s\le3\sigma+1$, then the term $\Big\lVert \left( \frac{\xi ^2\hat{f}}{\phi \left( \xi \right)} \right) ^{\lor} \Big\rVert _{Z^{s,\sigma -1}}$ is also not needed. The time-adapted Bourgain space, $Z^{s,\sigma-1}$, is introduced to cover all values of $s$.
	\end{proof}
	By \textbf{Lemma} \ref{lem 3.3}, \ref{lem 3.4}, \ref{lem 3.5}, \textbf{Theorem} \ref{thm1.3} and contraction mapping principal, we can obtain \textbf{Theorem} \ref{thm1.5}. 
	\subsection{Proof of Theorem \ref{thm1.3}, multilinear estimates in $X^{s,b}$ with $b=\frac{1}{2}$}\label{section4}
	To adjust for the proof, in the rest of this paper, we take $m$ being a fixed constant satisfying $m\gg1$.
	
	\begin{proof}
		First, we prove \eqref{1.9}. We only consider the case when $s$ is close to $-\frac{1}{2}$ since $\frac{\left< \xi _1 \right> \left< \xi _2 \right> \left< \xi _3 \right>}{\left< \xi _4 \right>}\gtrsim1$. Set $p=-s>0$, according to $\phi \left( \xi \right) =\left| \xi \right|\sqrt{\xi ^4+\xi ^2+1}\sim\left| \xi \right|\left< \xi \right> ^2$, one has,
		$$\Bigg\lVert \left( \frac{\xi ^2\widehat{uvw}}{\phi \left( \xi \right)} \right) ^{\lor} \Bigg\rVert _{X^{s,\sigma -1}}\sim \Bigg\lVert \frac{\left| \xi \right|\widehat{uvw}}{\left< \xi \right> ^{p+2}\left< \left| \tau \right|-\left| \xi \right|^3+\frac{1}{2}\left| \xi \right| \right> ^{1-\sigma}} \Bigg\rVert _{L_{\xi ,\tau}^{2}}.$$
		Let us denote $L:=\left| \tau \right|-\left| \xi \right|^3+\frac{1}{2}\left| \xi \right|$, $M:=\left< L \right> ^{\frac{1}{2}}+\chi _{\left| \xi \right|\le 1}\left< L \right> ^\sigma$, then, based on duality and Plancherel’s equality, to prove \eqref{1.9}, it suffices to prove:
		\begin{equation}\label{4.1}
		\left| \int_{A_4}{\frac{\left< \xi _1 \right> ^p\left< \xi _2 \right> ^p\left< \xi _3 \right> ^p\prod_{j=1}^4{f_j\left( \xi _j,\tau _j \right)}}{\left< \xi _4 \right> ^{p+1}M_1M_2M_3\left< L_4 \right> ^{1-\sigma}}} \right|\lesssim\prod_{j=1}^4{\lVert f_j \rVert _{L_{\xi}^{2}L_{\tau}^{2}}},
		\end{equation}
		where, \begin{equation*}
		f_1=\left< \xi \right> ^sM_1\widehat{u}\left( \xi ,\tau \right) ,\quad f_2=\left< \xi \right> ^sM_2\widehat{v}\left( \xi ,\tau \right) ,\quad f_3=\left< \xi \right> ^sM_3\widehat{w}\left( \xi ,\tau \right),\quad f_j\in L^2\left( \mathbb{R}^2 \right),\quad j=1,2,3,4
		\end{equation*}and,
		\begin{equation*}
		L_j=\tau _j-sgn \left( \tau _j \right) \left( \left| \xi _j \right|^3-\frac{1}{2}\left| \xi _j \right| \right),\quad j=1,2,3,4,\quad M_j:=\left< L_j \right> ^{\frac{1}{2}}+\chi _{\left| \xi _j \right|\le 1}\left< L_j \right> ^\sigma,\quad j=1,2,3.
		\end{equation*}
		In addition, we introduce the resonance function,
		$$H\left( \xi _1,\xi _2,\xi _3,\xi _4 \right) =\sum_{j=1}^4{L_j},$$
		which is the key to handle the estimate in high frequencies part. Without loss of generality, we assume $\frac{1}{3}<p<\frac{1}{2}$ and take $\sigma_0=\min \left\{ \frac{7}{12},1-p \right\} $. Moreover, by the symmetry construction between $\xi_{1}$, $\xi_{2}$, $\xi_{3}$ in \eqref{4.1}, there is no harm to assume $\left| \xi _1 \right|\le \left| \xi _2 \right|\le \left| \xi _3 \right|$ which impies that $\left| \xi _4 \right|\le \left| \xi _1 \right|+\left| \xi _2 \right|+\left| \xi _3 \right|\lesssim\left| \xi _3 \right|$. In order to establish the estimate \eqref{4.1}, we split the hyperplane into the following pieces:\begin{itemize}
			\item \textbf{Case \uppercase\expandafter{A}.}\quad$\left| \xi _3 \right|\le4m,$
			\item \textbf{Case \uppercase\expandafter{B}.}\quad$\left| \xi _3 \right|> 4m$, $\left| \xi _2 \right|\le m,$ 
			\item \textbf{Case \uppercase\expandafter{C}.}\quad$\left| \xi _3 \right|>4m,\ \left| \xi _2 \right|>m,\ \left| \xi _1 \right|\le 1,$
			\item \textbf{Case \uppercase\expandafter{D}.}\quad$\left| \xi _3 \right|>4m,\ \left| \xi _2 \right|>m,\ \left| \xi _1 \right|> 1$, $\left| \xi _4 \right|\sim\left| \xi _3 \right|,$
			\item \textbf{Case \uppercase\expandafter{E}.}\quad$\left| \xi _3 \right|>4m,\ \left| \xi _2 \right|>m,\ \left| \xi _1 \right|> 1$, $\left| \xi _1 \right|\lesssim\left| \xi _4 \right|\ll\left| \xi _2 \right|\sim\left| \xi _3 \right|,$
			\item \textbf{Case \uppercase\expandafter{F}.}\quad$\left| \xi _3 \right|>4m,\ \left| \xi _2 \right|>m,\ \left| \xi _1 \right|> 1$, $\left| \xi _4 \right|\ll\left| \xi _1 \right|$, $\left| \xi _2 \right|\sim\left| \xi _3 \right|,$
		\end{itemize}
		and denote the integral in each domain as $\varGamma_j$ with $j=1,...,6$, accordingly. We write,
		\begin{equation*}
		J:=\frac{\left< \xi _1 \right> ^p\left< \xi _2 \right> ^p\left< \xi _3 \right> ^p}{\left< \xi _4 \right> ^{p+1}},\quad \quad K:=\frac{1}{M_1M_2M_3\left< L_4 \right> ^{1-\sigma}}.
		\end{equation*}
		Next, we will prove that $\varGamma_ j$ is bounded by $\prod_{j=1}^4{\lVert f_j \rVert _{L_{\xi}^{2}L_{\tau}^{2}}}$.
		\begin{itemize}
			\item \textbf{Case \uppercase\expandafter{A}.} \quad $\left| \xi _3 \right|\le4m.$\\
			It follows that $\left| \xi _1 \right|$, $\left| \xi _2 \right|\lesssim1$, thus, $J\lesssim1$, $K\lesssim\frac{1}{\left< L_1 \right> ^{\frac{1}{2}}\left< L_2 \right> ^{\frac{1}{2}}\left< L_3 \right> ^{\frac{1}{2}}\left< L_4 \right> ^{1-\sigma}}$. Hence, one has,
			\begin{equation*}
			\begin{aligned}
			\varGamma _1\lesssim&\int_{A_4}{\left| \frac{f_1\left( \xi _1,\tau _1 \right)}{\left< L_1 \right> ^{\frac{1}{2}}}\frac{f_2\left( \xi _2,\tau _2 \right)}{\left< L_2 \right> ^{\frac{1}{2}}}\frac{f_3\left( \xi _3,\tau _3 \right)}{\left< L_3 \right> ^{\frac{1}{2}}}\frac{f_4\left( \xi _4,\tau _4 \right)}{\left< L_4 \right> ^{1-\sigma}} \right|}\\
			\lesssim&\lVert F_{1-\sigma}^{(4)} \rVert _{L_{x}^{2}L_{t}^{2}}\prod_{j=1}^3{\lVert F_{\frac{1}{2}}^{(j)} \rVert _{L_{x}^{6}L_{t}^{6}}}\lesssim\prod_{j=1}^4{\lVert f_j \rVert _{L_{\xi}^{2}L_{\tau}^{2}}}
			\end{aligned}
			\end{equation*}
			by \textbf{Lemma} \ref{lem1.7}, \ref{lem2.10}, where $\hat{F}_{\rho}^{\left( j \right)}:=\frac{f_j\left( \xi _j,\tau _j \right)}{\left< L_j \right> ^{\rho}}.$
			\item \textbf{Case \uppercase\expandafter{B}.}\quad$\left| \xi _3 \right|> 4m$, $\left| \xi _2 \right|\le m,$\\
			It follows that $\left| \xi _1 \right|\le \left| \xi _2 \right|\le m$, $\left| \xi _4 \right|>2m$, $J\lesssim\left< \xi _3 \right> ^p\lesssim\left| \xi _3 \right|^{\frac{1}{2}}\lesssim\left| \xi _3 \right|$ for $p<\frac{1}{2}$. \\
			\subitem \textbf{Case B.(a). }
			$\left< L_1 \right> =MAX:=\underset{j=1,2,3,4}{\max}\left< L_j \right> $, then it follows that, $$\left< L_1 \right> ^{\frac{1}{2}}\left< L_2 \right> ^{\frac{1}{2}}\left< L_3 \right> ^{\frac{1}{2}}\left< L_4 \right> ^{1-\sigma}\ge \left< L_1 \right> ^{\frac{5}{12}}\left< L_2 \right> ^{\frac{13}{24}}\left< L_3 \right> ^{\frac{13}{24}}\left< L_4 \right> ^{1-\sigma}.$$ Hence, one has,
			\begin{equation*}
			\begin{aligned}
			\varGamma _2\lesssim&\int_{A _4}{\left| \frac{f_1\left( \xi _1,\tau _1 \right)}{\left< L_1 \right> ^{\frac{5}{12}}}\frac{\chi _{\left| \xi _2 \right|\le m}f_2\left( \xi _2,\tau _2 \right)}{\left< L_2 \right> ^{\frac{13}{24}}}\frac{\left| \xi _3 \right|\chi _{\left| \xi _3 \right|>4m}f_3\left( \xi _3,\tau _3 \right)}{\left< L_3 \right> ^{\frac{13}{24}}}\frac{f_4\left( \xi _4,\tau _4 \right)}{\left< L_4 \right> ^{1-\sigma}} \right|}\\
			\lesssim&\lVert F_{\frac{5}{12}}^{(1)} \rVert _{L_{x}^{4}L_{t}^{4}}\lVert P_{m}F_{\frac{13}{24}}^{(2)} \rVert _{L_{x}^{2}L_{t}^{\infty}}\lVert D_xP^{4m}F_{\frac{13}{24}}^{(3)} \rVert _{L_{x}^{\infty}L_{t}^{2}}\lVert F_{1-\sigma}^{(4)} \rVert _{L_{x}^{4}L_{t}^{4}}\lesssim\prod_{j=1}^4{\lVert f_j \rVert _{L_{\xi}^{2}L_{\tau}^{2}}}
			\end{aligned}
			\end{equation*}
			by \textbf{Lemma} \ref{lem1.7}, \ref{lem2.8}, \ref{lem2.9}, \ref{2.10} since $\sigma<\frac{2}{3}$.
			\subitem \textbf{Case B.(b). }
			$\left< L_2 \right> =MAX $, then it follows that, $$\left< L_1 \right> ^{\frac{1}{2}}\left< L_2 \right> ^{\frac{1}{2}}\left< L_3 \right> ^{\frac{1}{2}}\left< L_4 \right> ^{1-\sigma}\ge \left< L_1 \right> ^{\frac{13}{24}}\left< L_2 \right> ^{\frac{5}{12}}\left< L_3 \right> ^{\frac{13}{24}}\left< L_4 \right> ^{1-\sigma}.$$
			Hence, similar to \textbf{Case B.(a)}, the integral in this domain is bounded by:
			\begin{equation*}
			\varGamma _2\lesssim\lVert P_{m}F_{\frac{13}{24}}^{(1)} \rVert _{L_{x}^{2}L_{t}^{\infty}}\lVert F_{\frac{5}{12}}^{(2)} \rVert _{L_{x}^{4}L_{t}^{4}}\lVert D_xP^{4m}F_{\frac{13}{24}}^{(3)} \rVert _{L_{x}^{\infty}L_{t}^{2}}\lVert F_{1-\sigma}^{(4)} \rVert _{L_{x}^{4}L_{t}^{4}}\lesssim \prod_{j=1}^4{\lVert f_j \rVert _{L_{\xi}^{2}L_{\tau}^{2}}}.
			\end{equation*}
			\subitem \textbf{Case B.(c). }
			$\left< L_3 \right> =MAX $, then it follows that, $$\left< L_1 \right> ^{\frac{1}{2}}\left< L_2 \right> ^{\frac{1}{2}}\left< L_3 \right> ^{\frac{1}{2}}\left< L_4 \right> ^{1-\sigma}\ge \left< L_1 \right> ^{\frac{1}{2}}\left< L_2 \right> ^{\frac{1}{2}}\left< L_3 \right> ^{\frac{1}{3}}\left< L_4 \right> ^{\frac{7}{6}-\sigma},\quad J\lesssim \frac{\left< \xi _3 \right> ^p}{\left< \xi _4 \right> ^{p+1}}\lesssim \frac{\left| \xi _3 \right|^{\frac{1}{2}}}{\left| \xi _4 \right|^{\frac{1}{4}}}.$$
			Hence, one has,
			\begin{equation*}
			\begin{aligned}
			\varGamma _2\lesssim&\int_{A _4}{\left| \frac{f_1\left( \xi _1,\tau _1 \right)}{\left< L_1 \right> ^{\frac{1}{2}}}\frac{\chi _{\left| \xi _2 \right|\le m}f_2\left( \xi _2,\tau _2 \right)}{\left< L_2 \right> ^{\frac{1}{2}}}\frac{\left| \xi _3 \right|^{\frac{1}{2}}\chi _{\left| \xi _3 \right|>4m}f_3\left( \xi _3,\tau _3 \right)}{\left< L_3 \right> ^{\frac{1}{3}}}\frac{\chi _{\left| \xi _4 \right|\le 2m}f_4\left( \xi _4,\tau _4 \right)}{\left| \xi _4 \right|^{\frac{1}{4}}\left< L_4 \right> ^{\frac{7}{6}-\sigma}} \right|}\\
			\lesssim&\lVert F_{\frac{1}{2}}^{(1)} \rVert _{L_{x}^{4}L_{t}^{4}}\lVert F_{\frac{1}{2}}^{(2)} \rVert _{L_{x}^{4}L_{t}^{4}}\lVert D_{x}^{\frac{1}{2}}P^{4m}F_{\frac{1}{3}}^{(3)} \rVert _{L_{x}^{4}L_{t}^{2}}\lVert D_{x}^{-\frac{1}{4}}P^{2m}F_{\frac{7}{6}-\sigma}^{(4)} \rVert _{L_{x}^{4}L_{t}^{\infty}}\lesssim\prod_{j=1}^4{\lVert f_j \rVert _{L_{\xi}^{2}L_{\tau}^{2}}}
			\end{aligned}
			\end{equation*}
			by \textbf{Lemma} \ref{lem1.7}, \ref{lem2.9}, \ref{lem2.10}, \ref{lem2.11}.
			\subitem \textbf{Case B.(d). }
			If $\left< L_4 \right> =MAX $, then it follows that, $$\left< L_1 \right> ^{\frac{1}{2}}\left< L_2 \right> ^{\frac{1}{2}}\left< L_3 \right> ^{\frac{1}{2}}\left< L_4 \right> ^{1-\sigma}\ge \left< L_1 \right> ^{\frac{1}{2}}\left< L_2 \right> ^{\frac{13}{24}}\left< L_3 \right> ^{\frac{13}{24}}\left< L_4 \right> ^{\frac{11}{12}-\sigma}.$$
			Hence, the integral in this domain is bounded by:
			\begin{equation*}
			\begin{aligned}
			\varGamma _2\lesssim&\int_{A _4}{\left| \frac{f_1\left( \xi _1,\tau _1 \right)}{\left< L_1 \right> ^{\frac{1}{2}}}\frac{\chi _{\left| \xi _2 \right|\le m}f_2\left( \xi _2,\tau _2 \right)}{\left< L_2 \right> ^{\frac{13}{24}}}\frac{\left| \xi _3 \right|\chi _{\left| \xi _3 \right|>4m}f_3\left( \xi _3,\tau _3 \right)}{\left< L_3 \right> ^{\frac{13}{24}}}\frac{f_4\left( \xi _4,\tau _4 \right)}{\left< L_4 \right> ^{\frac{11}{12}-\sigma}} \right|}\\
			\lesssim&\lVert F_{\frac{1}{2}}^{(1)} \rVert _{L_{x}^{4}L_{t}^{4}}\lVert P_{m}F_{\frac{13}{24}}^{(2)} \rVert _{L_{x}^{2}L_{t}^{\infty}}\lVert D_xP^{4m}F_{\frac{13}{12}}^{(3)} \rVert _{L_{x}^{\infty}L_{t}^{2}}\lVert F_{\frac{11}{12}-\sigma}^{(4)} \rVert _{L_{x}^{4}L_{t}^{4}}\lesssim\prod_{j=1}^4{\lVert f_j \rVert _{L_{\xi}^{2}L_{\tau}^{2}}}.
			\end{aligned}
			\end{equation*}
			\item \textbf{Case C.}\quad$\left| \xi _3 \right|>4m,\ \left| \xi _2 \right|>m,\ \left| \xi _1 \right|\le 1,$\\
			It follows that $J\lesssim \left< \xi _2 \right> ^p\left< \xi _3 \right> ^p\lesssim \left| \xi _3 \right|$ for $p<\frac{1}{2}$, $K\lesssim\frac{1}{\left< L_1 \right> ^\sigma\left< L_2 \right> ^{\frac{1}{2}}\left< L_3 \right> ^{\frac{1}{2}}\left< L_4 \right> ^{1-\sigma}}$. 
			\subitem \textbf{Case C.(a).}\quad$\left< L_1 \right> =MAX$, then it follows that, $$\left< L_1 \right> ^\sigma\left< L_2 \right> ^{\frac{1}{2}}\left< L_3 \right> ^{\frac{1}{2}}\left< L_4 \right> ^{1-\sigma}\ge \left< L_1 \right> ^{\frac{\sigma}{2}+\frac{1}{4}}\left< L_2 \right> ^{\frac{1}{2}}\left< L_3 \right> ^{\frac{\sigma}{2}+\frac{1}{4}}\left< L_4 \right> ^{1-\sigma}.$$ Hence, the integral in this domain is bounded by:
			\begin{equation*}
			\begin{aligned}
			\varGamma _3\lesssim&\int_{A _4}{\left| \frac{\chi _{\left| \xi _1 \right|\le 1}f_1\left( \xi _1,\tau _1 \right)}{\left< L_1 \right> ^{\frac{\sigma}{2}+\frac{1}{4}}}\frac{f_2\left( \xi _2,\tau _2 \right)}{\left< L_2 \right> ^{\frac{1}{2}}}\frac{\left| \xi _3 \right|\chi _{\left| \xi _3 \right|>4m}f_3\left( \xi _3,\tau _3 \right)}{\left< L_3 \right> ^{\frac{\sigma}{2}+\frac{1}{4}}}\frac{f_4\left( \xi _4,\tau _4 \right)}{\left< L_4 \right> ^{1-\sigma}} \right|}\\
			\lesssim&\lVert P_{1}F_{\frac{\sigma}{2}+\frac{1}{4}}^{(1)} \rVert _{L_{x}^{2}L_{t}^{\infty}}\lVert F_{\frac{1}{2}}^{(2)} \rVert _{L_{x}^{4}L_{t}^{4}}\lVert D_xP^{4m}F_{\frac{\sigma}{2}+\frac{1}{4}}^{(3)} \rVert _{L_{x}^{\infty}L_{t}^{2}}\lVert F_{1-\sigma}^{(4)} \rVert _{L_{x}^{4}L_{t}^{4}}\lesssim\prod_{j=1}^4{\lVert f_j \rVert _{L_{\xi}^{2}L_{\tau}^{2}}}.
			\end{aligned}
			\end{equation*}
			\subitem \textbf{Case C.(b).}\quad$\left< L_2 \right> =MAX $, then it follows that, $$\left< L_1 \right> ^\sigma\left< L_2 \right> ^{\frac{1}{2}}\left< L_3 \right> ^{\frac{1}{2}}\left< L_4 \right> ^{1-\sigma}\ge \left< L_1 \right> ^\sigma\left< L_2 \right> ^{\frac{5}{12}}\left< L_3 \right> ^{\frac{7}{12}}\left< L_4 \right> ^{1-\sigma}.$$
			Hence, the integral in this domain is bounded by:
			\begin{equation*}
			\begin{aligned}
			\varGamma _3\lesssim&\int_{A _4}{\left| \frac{\chi _{\left| \xi _1 \right|\le 1}f_1\left( \xi _1,\tau _1 \right)}{\left< L_1 \right> ^{\sigma}}\frac{f_2\left( \xi _2,\tau _2 \right)}{\left< L_2 \right> ^{\frac{5}{6}}}\frac{\left| \xi _3 \right|\chi _{\left| \xi _3 \right|>4m}f_3\left( \xi _3,\tau _3 \right)}{\left< L_3 \right> ^{\frac{7}{6}}}\frac{f_4\left( \xi _4,\tau _4 \right)}{\left< L_4 \right> ^{1-\sigma}} \right|}\\
			\lesssim&\lVert P_1F_{\sigma}^{(1)} \rVert _{L_{x}^{2}L_{t}^{\infty}}\lVert F_{\frac{5}{12}}^{(2)} \rVert _{L_{x}^{4}L_{t}^{4}}\lVert D_xP^{4m}F_{\frac{7}{12}}^{(3)} \rVert _{L_{x}^{\infty}L_{t}^{2}}\lVert F_{1-\sigma}^{(4)} \rVert _{L_{x}^{4}L_{t}^{4}}\lesssim\prod_{j=1}^4{\lVert f_j \rVert _{L_{\xi}^{2}L_{\tau}^{2}}}.
			\end{aligned}
			\end{equation*}
			\subitem \textbf{Case C.(c).}\quad$\left< L_3 \right> =MAX.$
			\subsubitem \textbf{Case C.(c).1.}\quad$\left| \xi _4 \right|\gtrsim\left| \xi _3 \right|$. It follows that
			$J\lesssim\frac{\left< \xi _2 \right> ^p\left< \xi _3 \right> ^p}{\left< \xi _3 \right> ^{p+1}}\lesssim1 .$
			Hence, the integral in this domain is bounded by $\prod_{j=1}^4{\lVert f_j \rVert _{L_{\xi}^{2}L_{\tau}^{2}}}$ using the argument similar to \textbf{Case A}.
			\subsubitem \textbf{Case C.(c).2.}\quad$\left| \xi _4 \right|\ll\left| \xi _3 \right|$. First, we point out that $\left| \xi _3 \right|\sim\left| \xi _2 \right|$. Thus, $J\lesssim\left| \xi _3 \right|\sim\left| \xi _2 \right|$, $\left< L_1 \right> ^\sigma\left< L_2 \right> ^{\frac{1}{2}}\left< L_3 \right> ^{\frac{1}{2}}\left< L_4 \right> ^{1-\sigma}\ge \left< L_1 \right> ^\sigma\left< L_2 \right> ^{\frac{7}{12}}\left< L_3 \right> ^{\frac{5}{12}}\left< L_4 \right> ^{1-\sigma}$. Hence, the integral in this domain is bounded by:
			\begin{equation*}
			\begin{aligned}
			\varGamma _3\lesssim&\int_{A _4}{\left| \frac{\chi _{\left| \xi _1 \right|\le 1}f_1\left( \xi _1,\tau _1 \right)}{\left< L_1 \right> ^{\sigma}}\frac{\chi _{\left| \xi _2 \right|>m}\left| \xi _2 \right|f_2\left( \xi _2,\tau _2 \right)}{\left< L_2 \right> ^{\frac{7}{6}}}\frac{f_3\left( \xi _3,\tau _3 \right)}{\left< L_3 \right> ^{\frac{5}{6}}}\frac{f_4\left( \xi _4,\tau _4 \right)}{\left< L_4 \right> ^{1-\sigma}} \right|}\\
			\lesssim&\lVert P_1F_{\sigma}^{(1)} \rVert _{L_{x}^{2}L_{t}^{\infty}}\lVert D_xP^{m}F_{\frac{7}{12}}^{(2)} \rVert _{L_{x}^{\infty}L_{t}^{2}}\lVert F_{\frac{5}{12}}^{(3)} \rVert _{L_{x}^{4}L_{t}^{4}}\lVert F_{1-\sigma}^{(4)} \rVert _{L_{x}^{4}L_{t}^{4}}\lesssim\prod_{j=1}^4{\lVert f_j \rVert _{L_{\xi}^{2}L_{\tau}^{2}}}.
			\end{aligned}
			\end{equation*}
			\subitem \textbf{Case C.(d).}\quad$\left< L_4 \right> =MAX $, then it follows that, $$\left< L_1 \right> ^\sigma\left< L_2 \right> ^{\frac{1}{2}}\left< L_3 \right> ^{\frac{1}{2}}\left< L_4 \right> ^{1-\sigma}\ge \left< L_1 \right> ^\sigma\left< L_2 \right> ^{\frac{1}{2}}\left< L_3 \right> ^{\frac{7}{12}}\left< L_4 \right> ^{\frac{11}{12}-\sigma}.$$
			Hence, the integral in this domain is bounded by:
			\begin{equation*}
			\begin{aligned}
			\varGamma _3\lesssim&\int_{A _4}{\left| \frac{\chi _{\left| \xi _1 \right|\le 1}f_1\left( \xi _1,\tau _1 \right)}{\left< L_1 \right> ^\sigma}\frac{f_2\left( \xi _2,\tau _2 \right)}{\left< L_2 \right> ^{\frac{1}{2}}}\frac{\left| \xi _3 \right|\chi _{\left| \xi _3 \right|>4m}f_3\left( \xi _3,\tau _3 \right)}{\left< L_3 \right> ^{\frac{7}{12}}}\frac{f_4\left( \xi _4,\tau _4 \right)}{\left< L_4 \right> ^{\frac{11}{12}-\sigma}} \right|}\\
			\lesssim&\lVert P_{1}F_{\sigma}^{(1)} \rVert _{L_{x}^{2}L_{t}^{\infty}}\lVert F_{\frac{1}{2}}^{(2)} \rVert _{L_{x}^{4}L_{t}^{4}}\lVert D_xP^{4m}F_{\frac{7}{12}}^{(3)} \rVert _{L_{x}^{\infty}L_{t}^{2}}\lVert F_{\frac{11}{12}-\sigma}^{(4)} \rVert _{L_{x}^{4}L_{t}^{4}}\lesssim\prod_{j=1}^4{\lVert f_j \rVert _{L_{\xi}^{2}L_{\tau}^{2}}}.
			\end{aligned}
			\end{equation*}
			\item \textbf{Case D.}\quad$\left| \xi _3 \right|>4m,\ \left| \xi _2 \right|>m,\ \left| \xi _1 \right|> 1$, $\left| \xi _4 \right|\sim\left| \xi _3 \right|.$\\
			It follows that $J\lesssim\frac{\left< \xi _3 \right> ^p\left< \xi _3 \right> ^p\left< \xi _3 \right> ^p}{\left< \xi _3 \right> ^{p+1}}\lesssim1$ since $\left| \xi _1 \right|\le \left| \xi _2 \right|\le \left| \xi _3 \right|$ and $p<\frac{1}{2}$. Then, by arguement similar to \textbf{Case A}, we can obtain that $\varGamma _4\lesssim\prod_{j=1}^4{\lVert f_j \rVert _{L_{\xi}^{2}L_{\tau}^{2}}}.$
			\item \textbf{Case E.}\quad$\left| \xi _3 \right|>4m,\ \left| \xi _2 \right|>m,\ \left| \xi _1 \right|> 1$, $\left| \xi _1 \right|\lesssim\left| \xi _4 \right|\ll\left| \xi _2 \right|\sim\left| \xi _3 \right|.$\\
			
			It follows that $K\le\frac{1}{\left< L_1 \right> ^{\frac{1}{2}}\left< L_2 \right> ^{\frac{1}{2}}\left< L_3 \right> ^{\frac{1}{2}}\left< L_4 \right> ^{1-\sigma}}$. 
			\subitem \textbf{Case E.(a).}\quad$\left< L_1 \right> =MAX$, then one has, $$\left< L_1 \right> ^{\frac{1}{2}}\left< L_2 \right> ^{\frac{1}{2}}\left< L_3 \right> ^{\frac{1}{2}}\left< L_4 \right> ^{1-\sigma}\ge \left< L_1 \right> ^{\frac{9}{24}}\left< L_2 \right> ^{\frac{1}{2}}\left< L_3 \right> ^{\frac{13}{24}}\left< L_4 \right> ^{\frac{13}{12}-\sigma},\quad J\lesssim \frac{\left< \xi _2 \right> ^p\left< \xi _3 \right> ^p}{\left< \xi _4 \right>}\lesssim \frac{\left| \xi _3 \right|}{\left< \xi _4 \right>}.$$ 
			Hence, one has,
			\begin{equation*}
			\begin{aligned}
			\varGamma _5\lesssim&\int_{A_4}{\left| \frac{f_1\left( \xi _1,\tau _1 \right)}{\left< L_1 \right> ^{\frac{9}{24}}}\frac{f_2\left( \xi _2,\tau _2 \right)}{\left< L_2 \right> ^{\frac{1}{2}}}\frac{\left| \xi _3 \right|\chi _{\left| \xi _3 \right|\ge 4m}f_3\left( \xi _3,\tau _3 \right)}{\left< L_3 \right> ^{\frac{13}{24}}}\frac{f_4\left( \xi _4,\tau _4 \right)}{\left< \xi _4 \right> \left< L_4 \right> ^{\frac{13}{12}-\sigma}} \right|}\\
			\lesssim&\lVert F_{\frac{9}{24}}^{(1)} \rVert _{L_{x}^{4}L_{t}^{4}}\lVert F_{\frac{1}{2}}^{(2)} \rVert _{L_{x}^{4}L_{t}^{4}}\lVert D_xP^{4m}F_{\frac{13}{24}}^{(3)} \rVert _{L_{x}^{\infty}L_{t}^{2}} \lVert J_{x}^{-1}F_{\frac{13}{12}-\sigma}^{(4)} \rVert _{L_{x}^{2}L_{t}^{\infty}}\lesssim\prod_{j=1}^4{\lVert f_j \rVert _{L_{\xi}^{2}L_{\tau}^{2}}}
			\end{aligned}
			\end{equation*}
			by \textbf{Lemma} \ref{lem1.7}, \ref{lem2.9}, \ref{lem2.10}, since $\sigma<\frac{7}{12}$.
			\subitem \textbf{Case E.(b).}\quad$\left< L_2 \right> =MAX $, then it follows that, $$\left< L_1 \right> ^{\frac{1}{2}}\left< L_2 \right> ^{\frac{1}{2}}\left< L_3 \right> ^{\frac{1}{2}}\left< L_4 \right> ^{1-\sigma}\ge \left< L_1 \right> ^{\frac{13}{24}}\left< L_2 \right> ^{\frac{5}{12}}\left< L_3 \right> ^{\frac{13}{24}}\left< L_4 \right> ^{1-\sigma},\quad J\lesssim\frac{\left< \xi _1 \right> ^p\left< \xi _3 \right> ^{2p}}{\left< \xi _1 \right> ^{p+1}}\lesssim\frac{\left< \xi _3 \right>}{\left< \xi _1 \right>}
			\sim\frac{\left| \xi _3 \right|}{\left< \xi _1 \right>}.$$
			Hence, similar to \textbf{Case E.(a)}, the integral in this domain is bounded by:
			\begin{equation*}
			\varGamma _5\lesssim\lVert J_{x}^{-1}F_{\frac{13}{24}}^{(1)} \rVert _{L_{x}^{2}L_{t}^{\infty}}\lVert F_{\frac{5}{12}}^{(2)} \rVert _{L_{x}^{4}L_{t}^{4}}\lVert D_xP^{4m}F_{\frac{13}{24}}^{(3)} \rVert _{L_{x}^{\infty}L_{t}^{2}}\lVert F_{1-\sigma}^{(4)} \rVert _{L_{x}^{4}L_{t}^{4}}\lesssim \prod_{j=1}^4{\lVert f_j \rVert _{L_{\xi}^{2}L_{\tau}^{2}}}.
			\end{equation*}
			\subitem \textbf{Case E.(c).}\quad$\left< L_3 \right> =MAX $, then it follows that, $$\left< L_1 \right> ^{\frac{1}{2}}\left< L_2 \right> ^{\frac{1}{2}}\left< L_3 \right> ^{\frac{1}{2}}\left< L_4 \right> ^{1-\sigma}\ge \left< L_1 \right> ^{\frac{13}{24}}\left< L_2 \right> ^{\frac{13}{24}}\left< L_3 \right> ^{\frac{5}{12}}\left< L_4 \right> ^{1-\sigma},\quad J\lesssim\frac{\left| \xi _3 \right|}{\left< \xi _1 \right>}\sim \frac{\left| \xi _2 \right|}{\left< \xi _1 \right>}.$$
			Hence, the integral in this domain is bounded by:
			\begin{equation*}
			\varGamma _5\lesssim\lVert J_{x}^{-1}F_{\frac{13}{24}}^{(1)} \rVert _{L_{x}^{2}L_{t}^{\infty}}\lVert D_xP^mF_{\frac{13}{24}}^{(3)} \rVert _{L_{x}^{\infty}L_{t}^{2}}\lVert F_{\frac{5}{12}}^{(3)} \rVert _{L_{x}^{4}L_{t}^{4}}\lVert F_{1-b}^{(4)} \rVert _{L_{x}^{4}L_{t}^{4}}\lesssim \prod_{j=1}^4{\lVert f_j \rVert _{L_{\xi}^{2}L_{\tau}^{2}}}.
			\end{equation*}
			\subitem \textbf{Case E.(d).}\quad$\left< L_4 \right> =MAX $, then it follows that, $$\left< L_1 \right> ^{\frac{1}{2}}\left< L_2 \right> ^{\frac{1}{2}}\left< L_3 \right> ^{\frac{1}{2}}\left< L_4 \right> ^{1-\sigma}\ge \left< L_1 \right> ^{\frac{13}{24}}\left< L_2 \right> ^{\frac{13}{12}-\sigma}\left< L_3 \right> ^{\frac{1}{2}}\left< L_4 \right> ^{\frac{9}{24}},\quad J\lesssim\frac{\left| \xi _2 \right|}{\left< \xi _1 \right>}.$$
			Hence, the integral in this domain is bounded by:
			\begin{equation*}
			\varGamma _5\lesssim\lVert J_{x}^{-1}F_{\frac{13}{24}}^{(1)} \rVert _{L_{x}^{2}L_{t}^{\infty}}\lVert D_xP^mF_{\frac{13}{12}-\sigma}^{(2)} \rVert _{L_{x}^{\infty}L_{t}^{2}}\lVert F_{\frac{1}{2}}^{(3)} \rVert _{L_{x}^{4}L_{t}^{4}}\lVert F_{\frac{9}{24}}^{(4)} \rVert _{L_{x}^{4}L_{t}^{4}}\lesssim \prod_{j=1}^4{\lVert f_j \rVert _{L_{\xi}^{2}L_{\tau}^{2}}}.
			\end{equation*}
			\item \textbf{Case F.}\quad$\left| \xi _3 \right|>4m,\ \left| \xi _2 \right|>m,\ \left| \xi _1 \right|> 1$, $\left| \xi _4 \right|\ll\left| \xi _1 \right|$, $\left| \xi _2 \right|\sim\left| \xi _3 \right|,$\\
			For $m\gg1$, we claim that in this whole domain, there exists a uniform bound of the resonance function $H$:
			\begin{equation*}
			\left| H \right|\gtrsim\left| \xi _1\xi _2\xi _3 \right|.
			\end{equation*}
			This claim follows by direct but tedious computation, for the details, reader can refer \textbf{Lemma} \ref{lem5.1} in \textbf{Appendix}. Moreover, we establish the estimate in the following subcases:
			$$\textbf{Case F.(a).} \left< L_1 \right> =MAX,\quad\quad\quad
			\textbf{Case F.(b).} \left< L_2 \right> =MAX,$$
			$$\textbf{Case F.(c).} \left< L_3 \right> =MAX,\quad\quad\quad
			\textbf{Case F.(d).} \left< L_4 \right> =MAX.$$
			If \textbf{Case F.(a)} holds, it then follows that $\left< L_1 \right> ^{\sigma}\left< L_2 \right> ^{\sigma}\left< L_3 \right> ^{\sigma}\left< L_4 \right> ^{1-\sigma}\ge\left< L_1 \right> ^{1-\sigma}\left< L_2 \right> ^{\sigma}\left< L_3 \right> ^{\sigma}\left< L_4 \right> ^{\sigma}$ and $\left< L_1 \right> ^{1-\sigma}\gtrsim\left< H \right> ^{1-\sigma}\gtrsim\left< \xi _1\xi _2\xi _3 \right> ^{1-\sigma}$, which will lead to $J\lesssim\frac{\left| \xi _1\xi _2\xi _3 \right|^p}{\left< \xi _1\xi _2\xi _3 \right> ^{1-\sigma}}\lesssim\left| \xi _1\xi _2\xi _3 \right|^{p+\sigma-1}\lesssim1$ since $\sigma\le \sigma_0\le1-p$. Hence,
			\begin{equation*}
			\begin{aligned}
			\varGamma _6\lesssim&\int{\left| f_1\left( \xi _1,\tau _1 \right) \frac{f_2\left( \xi _2,\tau _2 \right)}{\left< L_2 \right> ^{\sigma}}\frac{f_3\left( \xi _3,\tau _3 \right)}{\left< L_3 \right> ^{\sigma}}\frac{f_4\left( \xi _4,\tau _4 \right)}{\left< L_4 \right> ^{\sigma}} \right|}\\
			\lesssim&\lVert F_{0}^{(1)} \rVert _{L_{x}^{2}L_{t}^{2}}\prod_{j=2}^4{\lVert F_{\sigma}^{(j)} \rVert _{L_{x}^{6}L_{t}^{6}}}\lesssim\prod_{j=1}^4{\lVert f_j \rVert _{L_{\xi}^{2}L_{\tau}^{2}}},
			\end{aligned}
			\end{equation*}
			by \textbf{Lemma} \ref{lem1.7}, \ref{lem2.10}. Similar arguments can show the desired bound of the integral for case \textbf{F.(b)}, \textbf{F.(c)} and \textbf{F.(d)} holds.\end{itemize}
		This completes the proof of \eqref{1.9}. Next, we turn to prove \eqref{1.10}. Simialr as before, set $p=-s>0$, based on duality, to prove \eqref{1.10}, it suffices to prove:
		\be\label{4.2} 
		\left| \int_{A_4}{\frac{\left| \xi _4 \right|\left< \xi _1 \right> ^p\left< \xi _2 \right> ^p\left< \xi _3 \right> ^p\prod_{j=1}^4{f_j\left( \xi _j,\tau _j \right)}}{\left< \tau _4 \right> ^{\frac{p}{3}+\sigma -\frac{1}{2}}\left< \xi _4 \right> ^2M_1M_2M_3\left< L_4 \right> ^{1-\sigma}}} \right|\lesssim \prod_{j=1}^4{\lVert f_j \rVert _{L_{\xi}^{2}L_{\tau}^{2}}},
		\ee
		where $f_j$, $M_j$, $L_j$ have the same definition in \eqref{4.1}. By symmetry, we assume $\left| \xi _1 \right|\le \left| \xi _2 \right|\le \left| \xi _3 \right|$ and take $\sigma_0=\frac{2}{3}$. First we consider the case when $\left| \xi _4 \right|\lesssim1$ or $\left< \tau _4 \right>\gtrsim \left< \xi _4 \right> ^3$. When $\left| \xi _4 \right|\lesssim1$, then, 
		$$LHS\ of\ \eqref{4.2}\lesssim\left|\int_{A_4}{\frac{\left< \xi _1 \right> ^p\left< \xi _2 \right> ^p\left< \xi _3 \right> ^p\prod_{j=1}^4{f_j\left( \xi _j,\tau _j \right)}}{\left< \xi _4 \right> ^2M_1M_2M_3\left< L_4 \right> ^{1-\sigma}}}\right|\lesssim\left|\int_{A_4}{\frac{\left< \xi _1 \right> ^p\left< \xi _2 \right> ^p\left< \xi _3 \right> ^p\prod_{j=1}^4{f_j\left( \xi _j,\tau _j \right)}}{\left< \xi _4 \right> ^{p+1}M_1M_2M_3\left< L_4 \right> ^{1-\sigma}}}\right|.$$
		Also, when $\left< \tau _4 \right>\gtrsim \left< \xi _4 \right> ^3$, one has,
		$$LHS\ of\ \eqref{4.2}\lesssim\left|\int_{A_4}{\frac{\left< \xi _1 \right> ^p\left< \xi _2 \right> ^p\left< \xi _3 \right> ^p\prod_{j=1}^4{f_j\left( \xi _j,\tau _j \right)}}{\left< \xi _4 \right> ^{p+1+3\left( \sigma -\frac{1}{2} \right)}M_1M_2M_3\left< L_4 \right> ^{1-\sigma}}}\right|\lesssim\left|\int_{A_4}{\frac{\left< \xi _1 \right> ^p\left< \xi _2 \right> ^p\left< \xi _3 \right> ^p\prod_{j=1}^4{f_j\left( \xi _j,\tau _j \right)}}{\left< \xi _4 \right> ^{p+1}M_1M_2M_3\left< L_4 \right> ^{1-\sigma}}}\right|.$$
		Thus, when $\left| \xi _4 \right|\lesssim1$ or $\left< \tau _4 \right>\gtrsim \left< \xi _4 \right> ^3$, \eqref{4.2} holds as a corollary of \eqref{4.1}. Next, we consider the case when $\left| \xi _4 \right|\gtrsim1$, $\left< \tau _4 \right>\ll \left< \xi _4 \right> ^3$. In this case, one has, $$\left| \xi _4 \right|\sim\left< \xi _4 \right> \sim\left< L_4 \right> ^{\frac{1}{3}}.$$
		Therefore, 
		$$LHS\ of\ \eqref{4.2}\lesssim\left|\int_{A_4}{\frac{\left< \xi _1 \right> ^p\left< \xi _2 \right> ^p\left< \xi _3 \right> ^p\prod_{j=1}^4{f_j\left( \xi _j,\tau _j \right)}}{\left< \xi _4 \right> ^{1+3\left( 1-\sigma \right)}M_1M_2M_3}}\right|\lesssim\left|\int_{A_4}{\frac{\left< \xi _1 \right> ^p\left< \xi _2 \right> ^p\left< \xi _3 \right> ^p\prod_{j=1}^4{f_j\left( \xi _j,\tau _j \right)}}{\left< \xi _4 \right> ^2M_1M_2M_3}}\right|,$$
		since $\sigma\le\frac{2}{3}$, then it suffices to prove:
		$$\left| \int_{A_4}{\frac{\left< \xi _1 \right> ^p\left< \xi _2 \right> ^p\left< \xi _3 \right> ^p\prod_{j=1}^4{f_j\left( \xi _j,\tau _j \right)}}{\left< \xi _4 \right> ^2M_1M_2M_3}} \right|\lesssim \prod_{j=1}^4{\lVert f_j \rVert _{L_{\xi}^{2}L_{\tau}^{2}}}.$$
		In order to establish this estimate, we also split the hyperplane into \textbf{Case.A} through \textbf{Case.F}
		and denote the integral of each domain as $\varUpsilon _j$, accordingly. For simplicity in natations, we write:
		\begin{equation*}
		\mathcal{J}:=\frac{\left< \xi _1 \right> ^p\left< \xi _2 \right> ^p\left< \xi _3 \right> ^p}{\left< \xi _4 \right> ^2},\quad \quad \mathcal{K}:=\frac{1}{M_1M_2M_3}.
		\end{equation*}
		\begin{itemize}
			\item \textbf{Case A.} \quad $\left| \xi _3 \right|\le4m.$\\
			It follows that $\left| \xi _1 \right|$, $\left| \xi _2 \right|\lesssim1$, thus, $\mathcal{J}\lesssim1$, $\mathcal{K}\lesssim\frac{1}{\left< L_1 \right> ^{\frac{1}{2}}\left< L_2 \right> ^{\frac{1}{2}}\left< L_3 \right> ^{\frac{1}{2}}}$. Hence, one has,
			\begin{equation*}
			\begin{aligned}
			\varUpsilon_1\lesssim&\int_{A_4}{\left| \frac{f_1\left( \xi _1,\tau _1 \right)}{\left< L_1 \right> ^{\frac{1}{2}}}\frac{f_2\left( \xi _2,\tau _2 \right)}{\left< L_2 \right> ^{\frac{1}{2}}}\frac{f_3\left( \xi _3,\tau _3 \right)}{\left< L_3 \right> ^{\frac{1}{2}}}f_4\left( \xi _4,\tau _4 \right) \right|}\\
			=&\lesssim\lVert F_{0}^{(4)} \rVert _{L_{x}^{2}L_{t}^{2}}\prod_{j=1}^3{\lVert F_{\frac{1}{2}}^{(j)} \rVert _{L_{x}^{6}L_{t}^{6}}}\lesssim\prod_{j=1}^4{\lVert f_j \rVert _{L_{\xi}^{2}L_{\tau}^{2}}}.
			\end{aligned}
			\end{equation*}
			\item \textbf{Case B.}\quad$\left| \xi _3 \right|> 4m$, $\left| \xi _2 \right|\le m,$\\
			It follows that $\left| \xi _1 \right|\le \left| \xi _2 \right|\le m$, $\left| \xi _4 \right|>2m$ and $\mathcal{J}\lesssim \frac{\left< \xi _3 \right> ^p}{\left< \xi _4 \right> ^2}\lesssim \frac{\left| \xi _3 \right|^{\frac{1}{2}}}{\left| \xi _4 \right|^{\frac{1}{4}}\left< L_4 \right> ^{\frac{7}{12}}}.$
			Hence, one has,
			\begin{equation*}
			\begin{aligned}
			\varUpsilon _2\lesssim&\int_{A _4}{\left| \frac{f_1\left( \xi _1,\tau _1 \right)}{\left< L_1 \right> ^{\frac{1}{2}}}\frac{\chi _{\left| \xi _2 \right|\le m}f_2\left( \xi _2,\tau _2 \right)}{\left< L_2 \right> ^{\frac{1}{2}}}\frac{\left| \xi _3 \right|^{\frac{1}{2}}\chi _{\left| \xi _3 \right|>4m}f_3\left( \xi _3,\tau _3 \right)}{\left< L_3 \right> ^{\frac{1}{2}}}\frac{\chi _{\left| \xi _4 \right|\ge 2m}f_4\left( \xi _4,\tau _4 \right)}{\left| \xi _4 \right|^{\frac{1}{4}}\left< L_4 \right> ^{\frac{7}{12}}} \right|}\\
			\lesssim&\lVert F_{\frac{1}{2}}^{(1)} \rVert _{L_{x}^{4}L_{t}^{4}}\lVert F_{\frac{1}{2}}^{(2)} \rVert _{L_{x}^{4}L_{t}^{4}}\lVert D_{x}^{\frac{1}{2}}P^{4m}F_{\frac{1}{2}}^{(3)} \rVert _{L_{x}^{4}L_{t}^{2}}\lVert D_{x}^{-\frac{1}{4}}P^{2m}F_{\frac{7}{12}}^{(4)} \rVert _{L_{x}^{4}L_{t}^{\infty}}\lesssim\prod_{j=1}^4{\lVert f_j \rVert _{L_{\xi}^{2}L_{\tau}^{2}}}.
			\end{aligned}
			\end{equation*}
			\item \textbf{Case C.}\quad$\left| \xi _3 \right|>4m,\ \left| \xi _2 \right|>m,\ \left| \xi _1 \right|\le 1,$\\
			It follows that $\mathcal{J}\lesssim \frac{\left< \xi _2 \right> ^p\left< \xi _3 \right> ^p}{\left< \xi _4 \right> ^2}\lesssim \frac{\left| \xi _3 \right|}{\left< \xi _4 \right> ^2}\sim\frac{\left| \xi _3 \right|}{\left< L_4 \right> ^{\frac{2}{3}}}$, $\mathcal{K}\lesssim\frac{1}{\left< L_1 \right> ^\sigma\left< L_2 \right> ^{\frac{1}{2}}\left< L_3 \right> ^{\frac{1}{2}}}$. 
			\subitem \textbf{Case C.(a). }\quad$\left< L_1 \right> =MAX$, then it follows that, $$\left< L_1 \right> ^\sigma\left< L_2 \right> ^{\frac{1}{2}}\left< L_3 \right> ^{\frac{1}{2}}\ge \left< L_1 \right> ^{\frac{\sigma}{2}+\frac{1}{4}}\left< L_2 \right> ^{\frac{1}{2}}\left< L_3 \right> ^{\frac{\sigma}{2}+\frac{1}{4}}.$$ Hence, the integral in this domain is bounded by:
			\begin{equation*}
			\begin{aligned}
			\varUpsilon _3\lesssim&\int_{A _4}{\left| \frac{\chi _{\left| \xi _1 \right|\le 1}f_1\left( \xi _1,\tau _1 \right)}{\left< L_1 \right> ^{\frac{\sigma}{2}+\frac{1}{4}}}\frac{f_2\left( \xi _2,\tau _2 \right)}{\left< L_2 \right> ^{\frac{1}{2}}}\frac{\left| \xi _3 \right|\chi _{\left| \xi _3 \right|>4m}f_3\left( \xi _3,\tau _3 \right)}{\left< L_3 \right> ^{\frac{\sigma}{2}+\frac{1}{4}}}\frac{f_4\left( \xi _4,\tau _4 \right)}{\left< L_4 \right> ^{\frac{2}{3}}} \right|}\\
			\lesssim&\lVert P_{1}F_{\frac{\sigma}{2}+\frac{1}{4}}^{(1)} \rVert _{L_{x}^{2}L_{t}^{\infty}}\lVert F_{\frac{1}{2}}^{(2)} \rVert _{L_{x}^{4}L_{t}^{4}}\lVert D_xP^{4m}F_{\frac{\sigma}{2}+\frac{1}{4}}^{(3)} \rVert _{L_{x}^{\infty}L_{t}^{2}}\lVert F_{\frac{2}{3}}^{(4)} \rVert _{L_{x}^{4}L_{t}^{4}}\lesssim\prod_{j=1}^4{\lVert f_j \rVert _{L_{\xi}^{2}L_{\tau}^{2}}}.
			\end{aligned}
			\end{equation*}
			\subitem \textbf{Case C.(b). }\quad$\left< L_2 \right> =MAX $, then it follows that, $$\left< L_1 \right> ^\sigma\left< L_2 \right> ^{\frac{1}{2}}\left< L_3 \right> ^{\frac{1}{2}}\ge \left< L_1 \right> ^\sigma\left< L_2 \right> ^{\frac{5}{12}}\left< L_3 \right> ^{\frac{7}{12}}.$$
			Hence, the integral in this domain is bounded by:
			\begin{equation*}
			\begin{aligned}
			\varUpsilon _3\lesssim&\int_{A _4}{\left| \frac{\chi _{\left| \xi _1 \right|\le 1}f_1\left( \xi _1,\tau _1 \right)}{\left< L_1 \right> ^{\sigma}}\frac{f_2\left( \xi _2,\tau _2 \right)}{\left< L_2 \right> ^{\frac{5}{6}}}\frac{\left| \xi _3 \right|\chi _{\left| \xi _3 \right|>4m}f_3\left( \xi _3,\tau _3 \right)}{\left< L_3 \right> ^{\frac{7}{6}}}\frac{f_4\left( \xi _4,\tau _4 \right)}{\left< L_4 \right> ^{\frac{2}{3}}} \right|}\\
			\lesssim&\lVert P_1F_{\sigma}^{(1)} \rVert _{L_{x}^{2}L_{t}^{\infty}}\lVert F_{\frac{5}{12}}^{(2)} \rVert _{L_{x}^{4}L_{t}^{4}}\lVert D_xP^{4m}F_{\frac{7}{12}}^{(3)} \rVert _{L_{x}^{\infty}L_{t}^{2}}\lVert F_{\frac{2}{3}}^{(4)} \rVert _{L_{x}^{4}L_{t}^{4}}\lesssim\prod_{j=1}^4{\lVert f_j \rVert _{L_{\xi}^{2}L_{\tau}^{2}}}.
			\end{aligned}
			\end{equation*}
			\subitem \textbf{Case C.(c). }\quad$\left< L_3 \right> =MAX $, we split our discussion in following subcases.
			\subsubitem \textbf{Case C.(c).1.}\quad$\left| \xi _4 \right|\gtrsim\left| \xi _3 \right|$. It follows that
			$\mathcal{J}\lesssim\frac{\left< \xi _2 \right> ^p\left< \xi _3 \right> ^p}{\left< \xi _4 \right> ^2}\lesssim1 .$
			Hence, the integral in this domain is bounded by $\prod_{j=1}^4{\lVert f_j \rVert _{L_{\xi}^{2}L_{\tau}^{2}}}$ using the argument similar to  \textbf{Case A}.
			\subsubitem \textbf{Case C.(c).2.}\quad$\left| \xi _4 \right|\ll\left| \xi _3 \right|$. Similar to \textbf{Case C.(c).2} in the proof of \eqref{1.10}, in this case, one has $\left| \xi _3 \right|\sim\left| \xi _2 \right|$, $\mathcal{J}\lesssim \frac{\left| \xi _3 \right|}{\left< L_4 \right> ^{\frac{2}{3}}}\sim \frac{\left| \xi _2 \right|}{\left< L_4 \right> ^{\frac{2}{3}}}$ and $\left< L_1 \right> ^\sigma\left< L_2 \right> ^{\frac{1}{2}}\left< L_3 \right> ^{\frac{1}{2}}\ge \left< L_1 \right> ^\sigma\left< L_2 \right> ^{\frac{7}{12}}\left< L_3 \right> ^{\frac{5}{12}}$. Hence, the integral in this domain is bounded by:
			\begin{equation*}
			\begin{aligned}
			\varUpsilon _3\lesssim&\int_{A _4}{\left| \frac{\chi _{\left| \xi _1 \right|\le 1}f_1\left( \xi _1,\tau _1 \right)}{\left< L_1 \right> ^\sigma}\frac{\chi _{\left| \xi _2 \right|>m}\left| \xi _2 \right|f_2\left( \xi _2,\tau _2 \right)}{\left< L_2 \right> ^{\frac{7}{12}}}\frac{f_3\left( \xi _3,\tau _3 \right)}{\left< L_3 \right> ^{\frac{5}{12}}}\frac{f_4\left( \xi _4,\tau _4 \right)}{\left< L_4 \right> ^{\frac{2}{3}}} \right|}\\
			\lesssim&\lVert P_1F_{\sigma}^{(1)} \rVert _{L_{x}^{2}L_{t}^{\infty}}\lVert D_xP^{m}F_{\frac{7}{12}}^{(2)} \rVert _{L_{x}^{\infty}L_{t}^{2}}\lVert F_{\frac{5}{12}}^{(3)} \rVert _{L_{x}^{4}L_{t}^{4}}\lVert F_{\frac{2}{3}}^{(4)} \rVert _{L_{x}^{4}L_{t}^{4}}\lesssim\prod_{j=1}^4{\lVert f_j \rVert _{L_{\xi}^{2}L_{\tau}^{2}}}.
			\end{aligned}
			\end{equation*}
			\subitem \textbf{Case C.(d). }\quad$\left< L_4 \right> =MAX $, then it follows that, $$\left< L_1 \right> ^\sigma\left< L_2 \right> ^{\frac{1}{2}}\left< L_3 \right> ^{\frac{1}{2}}\left< L_4 \right> ^{\frac{2}{3}}\ge \left< L_1 \right> ^\sigma\left< L_2 \right> ^{\frac{1}{2}}\left< L_3 \right> ^{\frac{2}{3}}\left< L_4 \right> ^{\frac{1}{2}}.$$
			Hence, the integral in this domain is bounded by:
			\begin{equation*}
			\begin{aligned}
			\varUpsilon _3\lesssim&\int_{A_4}{\left| \frac{\chi _{\left| \xi _1 \right|\le 1}f_1\left( \xi _1,\tau _1 \right)}{\left< L_1 \right> ^\sigma}\frac{f_2\left( \xi _2,\tau _2 \right)}{\left< L_2 \right> ^{\frac{1}{2}}}\frac{\left| \xi _3 \right|\chi _{\left| \xi _3 \right|>4m}f_3\left( \xi _3,\tau _3 \right)}{\left< L_3 \right> ^{\frac{2}{3}}}\frac{f_4\left( \xi _4,\tau _4 \right)}{\left< L_4 \right> ^{\frac{1}{2}}} \right|}\\
			\lesssim&\lVert P_{1}F_{\sigma}^{(1)} \rVert _{L_{x}^{2}L_{t}^{\infty}}\lVert F_{\frac{1}{2}}^{(2)} \rVert _{L_{x}^{4}L_{t}^{4}}\lVert D_xP^{4m}F_{\frac{2}{3}}^{(3)} \rVert _{L_{x}^{\infty}L_{t}^{2}}\lVert F_{\frac{1}{2}}^{(4)} \rVert _{L_{x}^{4}L_{t}^{4}}\lesssim\prod_{j=1}^4{\lVert f_j \rVert _{L_{\xi}^{2}L_{\tau}^{2}}}.
			\end{aligned}
			\end{equation*}
			\item \textbf{Case D.}\quad$\left| \xi _3 \right|>4m,\ \left| \xi _2 \right|>m,\ \left| \xi _1 \right|> 1$, $\left| \xi _4 \right|\sim\left| \xi _3 \right|.$\\
			It follows that $\mathcal{J}\lesssim\frac{\left< \xi _3 \right> ^p\left< \xi _3 \right> ^p\left< \xi _3 \right> ^p}{\left< \xi _3 \right> ^2}\lesssim1$ since $\left| \xi _1 \right|\le \left| \xi _2 \right|\le \left| \xi _3 \right|$ and $p<\frac{1}{2}$. Then, by the arguement similar to \textbf{Case A}, we can obtain that $\varUpsilon _4\lesssim\prod_{j=1}^4{\lVert f_j \rVert _{L_{\xi}^{2}L_{\tau}^{2}}}.$
			\item \textbf{Case E.}\quad$\left| \xi _3 \right|>4m,\ \left| \xi _2 \right|>m,\ \left| \xi _1 \right|> 1$, $\left| \xi _1 \right|\lesssim\left| \xi _4 \right|\ll\left| \xi _2 \right|\sim\left| \xi _3 \right|.$ 
			\subitem \textbf{Case E.(a).}\quad$\left< L_1 \right> =MAX$, then for some $\epsilon_j>0$ small enough, $j=1,2,3$, one has,  
			$$\mathcal{J}\lesssim \frac{\left< \xi _3 \right> ^{2p}}{\left< \xi _4 \right> ^{2-p}}\lesssim \frac{\left| \xi _3 \right|}{\left< \xi _4 \right> ^{\frac{3}{4}+\epsilon _1}\left< \xi _4 \right> ^{\frac{5}{4}-p-\epsilon _1}}\sim \frac{\left| \xi _3 \right|}{\left< \xi _4 \right> ^{\frac{3}{4}+\epsilon _1}\left< L_4 \right> ^{\frac{5}{12}-\frac{p+\epsilon _1}{3}}},$$ $$\left< L_1 \right> ^{\frac{1}{2}}\left< L_2 \right> ^{\frac{1}{2}}\left< L_3 \right> ^{\frac{1}{2}}\ge \left< L_1 \right> ^{\frac{2}{9}+\epsilon _2}\left< L_2 \right> ^{\frac{1}{2}}\left< L_3 \right> ^{\frac{1}{2}+\epsilon _3}\left< L_4 \right> ^{\frac{5}{18}-\epsilon _2-\epsilon _3},$$
			and
			$$\mathcal{J}\mathcal{K}\lesssim \frac{\left| \xi _3 \right|}{\left< \xi _4 \right> ^{\frac{3}{4}+\epsilon _1}\left< L_1 \right> ^{\frac{2}{9}+\epsilon _2}\left< L_2 \right> ^{\frac{1}{2}}\left< L_3 \right> ^{\frac{1}{2}+\epsilon _3}\left< L_4 \right> ^{\frac{25}{36}-\frac{p+\epsilon _1}{3}-\epsilon _2-\epsilon _3}}.$$
			We intend to use $L^2_xL^{\infty}_{t}$ norm to control $J_{x}^{-\left( \frac{3}{4}+\epsilon _1 \right)}F_{\rho}^{(4)}$ by \eqref{2.13}, where $\rho=\frac{25}{36}-\frac{p+\epsilon _1}{3}-\epsilon _2-\epsilon _3$, thus, it forces that $\rho>\frac{1}{2}$, hence, one may require:
			$$\frac{7}{36}-\frac{p}{3}>\frac{1}{36}>\frac{\epsilon _1}{3}+\epsilon _2+\epsilon _3,$$
			for $p<\frac{1}{2}$. Particularly, we can choose $\epsilon_1=\frac{1}{72}$, $\epsilon_2=\epsilon_3=\frac{1}{144}$, therefore, 
			$$\mathcal{J}\mathcal{K}\lesssim \frac{\left| \xi _3 \right|}{\left< \xi _4 \right>^{\frac{55}{72}}\left< L_1 \right> ^{\frac{33}{144}}\left< L_2 \right> ^{\frac{1}{2}}\left< L_3 \right> ^{\frac{73}{144}}\left< L_4 \right> ^{\frac{73}{108}-\frac{p}{3}}}.$$ 
			Finally, the integral in this domain is bounded by:
			\begin{equation*}
			\begin{aligned}
			\varUpsilon _5\lesssim&\int_{A_4}{\left| \frac{f_1\left( \xi _1,\tau _1 \right)}{\left< L_1 \right> ^{\frac{33}{144}}}\frac{f_2\left( \xi _2,\tau _2 \right)}{\left< L_2 \right> ^{\frac{1}{2}}}\frac{\left| \xi _3 \right|\chi _{\left| \xi _3 \right|\ge 4m}f_3\left( \xi _3,\tau _3 \right)}{\left< L_3 \right> ^{\frac{73}{144}}}\frac{f_4\left( \xi _4,\tau _4 \right)}{\left< \xi _4 \right>^\frac{55}{72}\left< L_4 \right> ^{\frac{73}{108}-\frac{p}{3}}} \right|}\\
			\lesssim&\lVert F_{\frac{33}{144}}^{(1)} \rVert _{L_{x}^{3}L_{t}^{3}}\lVert F_{\frac{1}{2}}^{(2)} \rVert _{L_{x}^{6}L_{t}^{6}}\lVert D_xP^{4m}F_{\frac{73}{144}}^{(3)} \rVert _{L_{x}^{\infty}L_{t}^{2}}\lVert J_{x}^{-\frac{55}{72}}F_{\frac{73}{108}-\frac{p}{3}}^{(4)} \rVert _{L_{x}^{2}L_{t}^{\infty}}\lesssim\prod_{j=1}^4{\lVert f_j \rVert _{L_{\xi}^{2}L_{\tau}^{2}}},
			\end{aligned}
			\end{equation*}
			since $\frac{73}{108}-\frac{p}{3}>\frac{55}{108}>\frac{1}{2}$.
			\subitem \textbf{Case E.(b).}\quad$\left< L_2 \right> =MAX $, similar to 
			\textbf{Case E.(a)},  it follows that, $$\mathcal{J}\lesssim\frac{\left| \xi _3 \right|}{\left< \xi _4 \right>^{\frac{55}{72}}\left< L_4 \right> ^{\frac{1}{3}\left( \frac{89}{72}-p \right)}},\quad\left< L_1 \right> ^{\frac{1}{2}}\left< L_2 \right> ^{\frac{1}{2}}\left< L_3 \right> ^{\frac{1}{2}}\ge \left< L_1 \right> ^{\frac{1}{2}}\left< L_2 \right> ^{\frac{33}{144}}\left< L_3 \right> ^{\frac{73}{144}}\left< L_4 \right> ^{\frac{19}{72}}$$
			and 
			$$\mathcal{J}\mathcal{K}\lesssim \frac{\left| \xi _3 \right|}{\left< \xi _4 \right>^{\frac{55}{72}}\left< L_1 \right> ^{\frac{1}{2}}\left< L_2 \right> ^{\frac{33}{144}}\left< L_3 \right> ^{\frac{73}{144}}\left< L_4 \right> ^{\frac{73}{108}-\frac{p}{3}}}.$$
			Hence, the integral in this domain is bounded by:
			\begin{equation*}
			\varUpsilon _5\lesssim\lVert F_{\frac{1}{2}}^{(1)} \rVert _{L_{x}^{6}L_{t}^{6}}\lVert F_{\frac{33}{144}}^{(2)} \rVert _{L_{x}^{3}L_{t}^{(3)}}\lVert D_xP^{4m}F_{\frac{73}{144}}^{(3)} \rVert _{L_{x}^{\infty}L_{t}^{2}}\lVert J_{x}^{-\frac{55}{72}}F_{\frac{73}{108}-\frac{p}{3}}^{(4)} \rVert _{L_{x}^{2}L_{t}^{\infty}}\lesssim \prod_{j=1}^4{\lVert f_j \rVert _{L_{\xi}^{2}L_{\tau}^{2}}}.
			\end{equation*}
			\subitem \textbf{Case E.(c).}\quad$\left< L_3 \right> =MAX $, then it follows that, $$\mathcal{J}\lesssim \frac{\left| \xi _3 \right|}{\left< \xi _4 \right>^{\frac{55}{72}}\left< L_4 \right> ^{\frac{1}{3}\left( \frac{89}{72}-p \right)}}\sim\frac{\left| \xi _2 \right|}{\left< \xi _4 \right>^{\frac{55}{72}}\left< L_4 \right> ^{\frac{1}{3}\left( \frac{89}{72}-p \right)}},$$$$\left< L_1 \right> ^{\frac{1}{2}}\left< L_2 \right> ^{\frac{1}{2}}\left< L_3 \right> ^{\frac{1}{2}}\ge \left< L_1 \right> ^{\frac{1}{2}}\left< L_2 \right> ^{\frac{73}{144}}\left< L_3 \right> ^{\frac{33}{144}}\left< L_4 \right> ^{\frac{19}{72}},$$
			and 
			$$\mathcal{J}\mathcal{K}\lesssim \frac{\left| \xi _2 \right|}{\left< \xi _4 \right>^{\frac{55}{72}}\left< L_1 \right> ^{\frac{1}{2}}\left< L_2 \right> ^{\frac{73}{144}}\left< L_3 \right> ^{\frac{33}{144}}\left< L_4 \right> ^{\frac{73}{108}-\frac{p}{3}}}.$$
			Hence, the integral in this domain is bounded by:
			\begin{equation*}
			\varUpsilon _5\lesssim\lVert F_{\frac{1}{2}}^{(1)} \rVert _{L_{x}^{6}L_{t}^{6}}\lVert D_xP^{m}F_{\frac{73}{144}}^{(2)} \rVert _{L_{x}^{\infty}L_{t}^{2}}\lVert F_{\frac{33}{144}}^{(3)} \rVert _{L_{x}^{3}L_{t}^{3}}\lVert J_{x}^{-\frac{55}{72}}F_{\frac{73}{108}-\frac{p}{3}}^{(4)} \rVert _{L_{x}^{2}L_{t}^{\infty}}\lesssim \prod_{j=1}^4{\lVert f_j \rVert _{L_{\xi}^{2}L_{\tau}^{2}}}.
			\end{equation*}
			\subitem \textbf{Case E.(d).}\quad$\left< L_4 \right> =MAX $, then it follows that, $$\mathcal{J}\lesssim \frac{\left| \xi _3 \right|}{\left< \xi _4 \right>^{\frac{55}{72}}\left< L_4 \right> ^{\frac{1}{3}\left( \frac{89}{72}-p \right)}}\lesssim\frac{\left| \xi _3 \right|}{\left< \xi _1 \right>^{\frac{55}{72}}\left< L_1 \right> ^{\frac{1}{144}}\left< L_3 \right> ^{\frac{1}{144}}\left< L_4 \right> ^{\frac{43}{108}-\frac{p}{3}}}$$
			and 
			$$\mathcal{J}\mathcal{K}\lesssim\frac{\left| \xi _3 \right|}{\left< \xi_1 \right>^{\frac{55}{72}}\left< L_1 \right> ^{\frac{73}{144}}\left< L_2 \right> ^{\frac{1}{2}}\left< L_3 \right> ^{\frac{73}{144}}\left< L_4 \right> ^{\frac{43}{108}-\frac{p}{3}}}.$$
			Hence, the integral in this domain is bounded by:
			\begin{equation*}
			\varUpsilon _5\lesssim\lVert J_{x}^{-\frac{55}{72}}F_{\frac{73}{144}}^{(1)} \rVert _{L_{x}^{2}L_{t}^{\infty}}\lVert F_{\frac{1}{2}}^{(2)} \rVert _{L_{x}^{6}L_{t}^{6}}\lVert D_xP^{4m}F_{\frac{73}{144}}^{(3)} \rVert _{L_{x}^{\infty}L_{t}^{2}}\lVert F_{\frac{73}{108}-\frac{p}{3}}^{(4)} \rVert _{L_{x}^{3}L_{t}^{3}}\lesssim \prod_{j=1}^4{\lVert f_j \rVert _{L_{\xi}^{2}L_{\tau}^{2}}},
			\end{equation*}
			since $\frac{43}{108}-\frac{p}{3}>\frac{25}{108}>\frac{2}{9}.$
			\item \textbf{Case F.}\quad$\left| \xi _3 \right|>4m,\ \left| \xi _2 \right|>m,\ \left| \xi _1 \right|> 1$, $\left| \xi _4 \right|\ll\left| \xi _1 \right|$, $\left| \xi _2 \right|\sim\left| \xi _3 \right|,$\\
			By \textbf{Lemma} \ref{lem5.1}, in this whole domain, we can give a uniform bound of the resonance function $H$:
			\begin{equation*}
			\left| H \right|\gtrsim\left| \xi _1\xi _2\xi _3 \right|.
			\end{equation*}
			Then by the argument similar as before, we can conclude that: 
			$$\varUpsilon _6\lesssim\prod_{j=1}^4{\lVert f_j \rVert _{L_{\xi}^{2}L_{\tau}^{2}}}.$$
		\end{itemize}
		The proof is now complete.\end{proof}
	\section{Studies on the IVP}
	\subsection{Related linear estimates for IVP}
	Following estimates are essential to establish the local wellposedness and can be found in \cite{esfahani2012local,esfahani2014bilinear}.
	\begin{lem}\label{lem3.1}
		For $s,b\in \mathbb{R}$, then
		$$\lVert \eta \left( t \right) \left[ W_{\mathbb{R}}\left( \varphi ,\psi \right) \right]\left(x,t\right) \rVert _{X^{s,b}}\lesssim\lVert \varphi \rVert _{H^s\left( \mathbb{R} \right)}+\lVert \psi \rVert _{H^{s-1}\left( \mathbb{R} \right)}.$$
	\end{lem}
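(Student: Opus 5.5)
The plan is to compute the space–time Fourier transform of $\eta(t)\left[W_{\mathbb{R}}(\varphi,\psi)\right](x,t)$ explicitly. Then the $X^{s,b}$ norm factorizes into an $\xi$-integral against a $\tau$-integral of a Schwartz function. Throughout I would use the original phase $\phi(\xi)=\sqrt{\xi^6+k\xi^4+\xi^2}\ge 0$ from the definition of $X^{s,b}$, rather than the modified one; this is legitimate by the norm equivalence recalled in Section \ref{section2}. I would treat $V_1(\varphi)$ and $V_2(\psi)$ separately and add the two bounds.

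For $V_1$, note that $\eta(t)e^{\pm it\phi(\xi)}$ has time Fourier transform $\hat\eta(\tau\mp\phi(\xi))$. Hence
$$\widehat{\eta V_1(\varphi)}(\xi,\tau)=\tfrac12\left[\hat\eta(\tau-\phi(\xi))+\hat\eta(\tau+\phi(\xi))\right]\hat\varphi(\xi).$$
It therefore suffices to show that
$$\sup_{\xi}\int_{\mathbb{R}}\left<|\tau|-\phi(\xi)\right>^{2b}\left|\hat\eta(\tau\mp\phi(\xi))\right|^2d\tau<\infty .$$
Given that bound, Fubini yields $\|\eta V_1\varphi\|_{X^{s,b}}\lesssim\|\left<\xi\right>^s\hat\varphi\|_{L^2}=\|\varphi\|_{H^s}$.

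The uniform bound in $\xi$ splits by the sign of $b$. For $b\ge0$, since $\phi\ge0$ we have $\big||\tau|-\phi\big|\le\min\{|\tau-\phi|,|\tau+\phi|\}$. So $\left<|\tau|-\phi\right>^{2b}\le\left<\tau\mp\phi\right>^{2b}$, and after translating $\tau$ the integral becomes $\int\left<\lambda\right>^{2b}|\hat\eta(\lambda)|^2d\lambda$, which is finite because $\hat\eta$ is Schwartz. For $b<0$ we simply use $\left<|\tau|-\phi\right>^{2b}\le1$, and the integral is bounded by $\|\hat\eta\|_{L^2}^2$. This case distinction is the only slightly delicate point: the weight $\left<|\tau|-\phi\right>$ can be much smaller than $\left<\tau-\phi\right>$ near $\tau\approx-\phi$, so one cannot compare the two weights in the wrong direction when $b<0$.

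For $V_2$ the same computation applies, with $\hat\varphi(\xi)$ replaced by $\frac{\xi^2}{\phi(\xi)}\hat\psi(\xi)$. Since $\phi(\xi)\sim|\xi|\left<\xi\right>^2$ (as already used in the proof of Theorem \ref{thm1.3}), we get $\frac{\xi^2}{\phi(\xi)}\sim\frac{|\xi|}{\left<\xi\right>^2}\lesssim\left<\xi\right>^{-1}$, which stays bounded near $\xi=0$. Therefore $\left<\xi\right>^s\left|\frac{\xi^2}{\phi}\hat\psi\right|\lesssim\left<\xi\right>^{s-1}|\hat\psi|$, and this gives $\|\eta V_2\psi\|_{X^{s,b}}\lesssim\|\psi\|_{H^{s-1}}$. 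Summing the two bounds proves the lemma. No step is a real obstacle; the only care needed is the sign of $b$ in the weight comparison.
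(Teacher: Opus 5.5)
Your argument is correct, and it is the standard proof. The paper gives no proof of its own and simply cites \cite{esfahani2012local,esfahani2014bilinear}, where the estimate follows from the same computation: $\widehat{\eta V_1(\varphi)}(\xi,\tau)=\frac12\bigl[\hat\eta(\tau-\phi(\xi))+\hat\eta(\tau+\phi(\xi))\bigr]\hat\varphi(\xi)$ combined with $\left<|\tau|-\phi(\xi)\right>\le\left<\tau\mp\phi(\xi)\right>$, plus $\xi^2/\phi(\xi)\lesssim\left<\xi\right>^{-1}$ for the $\psi$-part. You also correctly use the true nonnegative phase $\sqrt{\xi^6+k\xi^4+\xi^2}$ (the one that actually appears in $W_{\mathbb{R}}$) and treat $b<0$ separately, which are the two points needing care.
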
 
	\begin{lem}\label{lem3.2}
		For $s\in \mathbb{R}$, $-\frac{1}{2}<b^{\prime}\le 0\le b\le b^{\prime}+1$ and $0<T<1$, then
		$$\Big\lVert \eta \left( \frac{t}{T} \right) \int_0^t{\left[ W_{\mathbb{R}}\left( 0,f \right) \right] \left( x,t-\tau \right) d\tau} \Big\rVert _{X^{s,b}}\le T^{1+b^{\prime}-b}\Big\lVert \left( \frac{\hat{f}\left( \xi ,\tau \right)}{\phi \left( \xi \right)} \right) ^{\lor} \Big\rVert _{X^{s,b^{\prime}}}.$$
	\end{lem}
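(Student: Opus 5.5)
The plan is to reduce the estimate to the classical Kenig--Ponce--Vega (Ginibre) inhomogeneous estimate for a one-sided group, once for each of $U_1$ and $U_2$. The two-sided weight $\left<\left|\tau\right|-\phi(\xi)\right>$ is then compared with the one-sided weights $\left<\tau\mp\phi(\xi)\right>$, and the sign conditions $b'\le0\le b$ are exactly what make these comparisons go in the right direction.

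\textbf{Step 1: split the Duhamel term.} From the formula for $V_2$, the operator $W_{\mathbb{R}}(0,f)$ is the half-difference of $e^{\pm it\phi(\xi)}$ applied to the symbol $\xi^2\hat f/\phi(\xi)$. The $\xi^2$ is absorbed into the forcing, consistently with the right-hand side of the statement. Set $\hat g:=\hat f/\phi$, with Fourier transform taken in $x$ only at fixed time. Then
$$\int_0^t\left[W_{\mathbb{R}}(0,f)\right](x,t-t')\,dt'=\frac{1}{2i}\left(\int_0^tU_1(t-t')g(t')\,dt'-\int_0^tU_2(t-t')g(t')\,dt'\right).$$
It therefore suffices to treat each term separately, with $U_2(t)=U_1(-t)$.

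\textbf{Step 2: the one-sided estimate.} Define $X_\pm^{s,b}$ through the weight $\left<\xi\right>^s\left<\tau\mp\phi(\xi)\right>^b$. The key claim is that for $-\frac12<b'\le0\le b\le b'+1$ and $0<T<1$,
$$\Big\lVert\eta\!\left(\tfrac tT\right)\int_0^tU_{1}(t-t')g(t')\,dt'\Big\rVert_{X_+^{s,b}}\lesssim T^{1+b'-b}\lVert g\rVert_{X_+^{s,b'}},$$
and similarly for $U_2$ with $X_-$.

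Conjugating by $U_1(-t)$ reduces this to a purely temporal statement at fixed $\xi$: the map $G\mapsto\eta(t/T)\int_0^tG(t')\,dt'$ is bounded $H^{b'}_t\to H^b_t$ with norm $\lesssim T^{1+b'-b}$. I would write
$$\int_0^tG\,dt'=\int\hat G(\lambda)\,\frac{e^{it\lambda}-1}{i\lambda}\,d\lambda$$
and split into $|\lambda|T\le1$ and $|\lambda|T>1$.
\begin{itemize}
\item On $|\lambda|T\le1$, Taylor-expand $e^{it\lambda}-1=\sum_k(it\lambda)^k/k!$. Each term is bounded by $\lVert t^k\eta(t/T)\rVert_{H^b}$ times $\int_{|\lambda|\le1/T}|\lambda|^{k-1}|\hat G|$, and Cauchy--Schwarz yields the factor $T^{1+b'-b}$; this uses $b'>-\frac12$.
\item On $|\lambda|T>1$, the piece $\eta(t/T)\int\hat G e^{it\lambda}/(i\lambda)$ is handled via $\lVert\eta(t/T)h\rVert_{H^b}\lesssim\lVert h\rVert_{H^b}+T^{\frac12-b}\lVert h\rVert_{L^\infty}$ together with $\left<\lambda\right>^{b-1}\le T^{1+b'-b}\left<\lambda\right>^{b'}$ when $|\lambda|>1/T$, since $b\le b'+1$.
\item The remaining piece $\eta(t/T)\int_{|\lambda|>1/T}\hat G/\lambda$ is a constant times $\eta(t/T)$; Cauchy--Schwarz again gives $T^{1+b'-b}$, using $b'<\frac12$.
\end{itemize}
This is the main technical step. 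It is standard, and I would follow the proof in \cite{esfahani2012local,esfahani2014bilinear} essentially verbatim.

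\textbf{Step 3: compare two-sided and one-sided weights.} Since $\phi\ge0$, we have $\big||\tau|-\phi\big|\le\min\{|\tau-\phi|,|\tau+\phi|\}$. Consequently:
\begin{itemize}
\item for $b\ge0$, $\left<|\tau|-\phi\right>^b\le\left<\tau\mp\phi\right>^b$, so $\lVert\cdot\rVert_{X^{s,b}}\le\lVert\cdot\rVert_{X_\pm^{s,b}}$;
\item for $b'\le0$, $\left<\tau\mp\phi\right>^{b'}\le\left<|\tau|-\phi\right>^{b'}$, so $\lVert g\rVert_{X_\pm^{s,b'}}\le\lVert g\rVert_{X^{s,b'}}$.
\end{itemize}
Chaining these with Step 2 for each of the two terms from Step 1 gives the lemma.

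The only care needed in the final chaining is the small-frequency region, where the modified phase function may differ in sign from the true $\phi$. For the original $\phi=\sqrt{\xi^6+k\xi^4+\xi^2}\ge0$ the comparisons are exact, and switching to the equivalent modified phase only costs constants, by the equivalence recalled in Section 2.
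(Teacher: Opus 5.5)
Your proposal is correct. It is the standard argument behind this lemma, which the paper does not prove but cites from \cite{esfahani2012local,esfahani2014bilinear}: write the Duhamel term through $e^{\pm it\phi}$, apply the Ginibre--Tsutsumi--Velo one-sided estimate, and pass between $\left<|\tau|-\phi\right>$ and $\left<\tau\mp\phi\right>$ using $b'\le 0\le b$.

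One small fix in Step 2: you have the two uses of the hypotheses swapped. The condition $b'>-\frac12$ is what makes $\int_{|\lambda|>1/T}|\lambda|^{-2}\left<\lambda\right>^{-2b'}d\lambda$ finite, which is needed in your third bullet and for the $L^\infty$ term in the second. The low-frequency Taylor part only needs $b'\le 0$, and the $b'<\frac12$ you cite in the third bullet plays no role. Separately, as you implicitly note, the statement should really have $\xi^2\hat f/\phi$ on the right-hand side and $\lesssim$ in place of $\le$.
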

	By \textbf{Lemma} \ref{lem3.1}, \ref{3.2}, \textbf{Theorem} \ref{thm1.4} and contraction mapping principal, we can obtain \textbf{Theorem} \ref{thm1.6}.
	
	\subsection{Proof of Theorem \ref{thm1.4}, multilinear estimates in $X^{s,b}$ with $b>\frac{1}{2}$}
	\begin{proof}
		Set $p=-s>0$, based on duality and Plancherel’s equality, it suffices to prove:
		\begin{equation}\label{4.3}
		\left| \int_{A_4}{\frac{\left< \xi _1 \right> ^p\left< \xi _2 \right> ^p\left< \xi _3 \right> ^p\prod_{j=1}^4{f_j\left( \xi _j,\tau _j \right)}}{\left< \xi _4 \right> ^{p+1}\left< L_1 \right> ^b\left< L_2 \right> ^b\left< L_3 \right> ^b\left< L_4 \right> ^{1-b}}} \right|\lesssim\prod_{j=1}^4{\lVert f_j \rVert _{L_{\xi}^{2}L_{\tau}^{2}}},
		\end{equation}
		where \begin{equation*}
		\begin{aligned}
		&f_1=\left< \xi \right> ^s\left< \left| \tau \right|-\left| \xi \right|^3+\frac{1}{2}\left| \xi \right| \right> ^b\widehat{u}\left( \xi ,\tau \right),\quad f_2=\left< \xi \right> ^s\left< \left| \tau \right|-\left| \xi \right|^3+\frac{1}{2}\left| \xi \right| \right> ^b\widehat{v}\left( \xi ,\tau \right),\\
		&f_3=\left< \xi \right> ^s\left< \left| \tau \right|-\left| \xi \right|^3+\frac{1}{2}\left| \xi \right| \right> ^b\widehat{w}\left( \xi ,\tau \right),\quad f_j\in L^2\left( \mathbb{R}^2 \right),j=1,2,3,4,
		\end{aligned}
		\end{equation*}and 
		\begin{equation*}
		L_j=\tau _j-sgn \left( \tau _j \right) \left( \left| \xi _j \right|^3-\frac{1}{2}\left| \xi _j \right| \right), j=1,2,3,4.
		\end{equation*}
		Without loss of generality, we assume $\frac{1}{3}<p<\frac{1}{2}$ and take $b_0=1-p<\frac{2}{3}$. In addition, we also assume $\left| \xi _1 \right|\le \left| \xi _2 \right|\le \left| \xi _3 \right|$ which impies that $\left| \xi _4 \right|\lesssim\left| \xi _3 \right|$. In order to establish the estimate \eqref{4.3}, we split the hyperplane into the following pieces:\begin{itemize}
			\item \textbf{Case \uppercase\expandafter{A}.}\quad$\left| \xi _3 \right|\le m,$
			\item \textbf{Case \uppercase\expandafter{B}.}\quad$\left| \xi _3 \right|> m$, $\left| \xi _2 \right|\le m,$ 
			\item \textbf{Case \uppercase\expandafter{C}.}\quad$\left| \xi _3 \right|,\left| \xi _2 \right|> m$, $\left| \xi _1 \right|\le m,$
			\item \textbf{Case \uppercase\expandafter{D}.}\quad$\left| \xi _3 \right|,\left| \xi _2 \right|,\left| \xi _1 \right|> m$, $\left| \xi _4 \right|\sim\left| \xi _3 \right|,$
			\item \textbf{Case \uppercase\expandafter{E}.}\quad$\left| \xi _3 \right|,\left| \xi _2 \right|,\left| \xi _1 \right|>m$, $\left| \xi _1 \right|\lesssim\left| \xi _4 \right|\ll\left| \xi _2 \right|\sim\left| \xi _3 \right|,$
			\item \textbf{Case \uppercase\expandafter{F}.}\quad$\left| \xi _3 \right|,\left| \xi _2 \right|,\left| \xi _1 \right|>m$, $\left| \xi _4 \right|\ll\left| \xi _1 \right|$, $\left| \xi _2 \right|\sim\left| \xi _3 \right|,$
		\end{itemize}
		For simplicity in natations, we write,
		\begin{equation*}
		J:=\frac{\left< \xi _1 \right> ^p\left< \xi _2 \right> ^p\left< \xi _3 \right> ^p}{\left< \xi _4 \right> ^{p+1}}
		\end{equation*}
		and denote $\varPsi _j$ as the integral over the domains of \textbf{Case A} through \textbf{Case F}, accordingly. Nextm, we will prove that $\varPsi _j$ is bounded by $\prod_{j=1}^4{\lVert f_j \rVert _{L_{\xi}^{2}L_{\tau}^{2}}}$.
		\begin{itemize}
			\item \textbf{Case \uppercase\expandafter{A}.} \quad $\left| \xi _3 \right|\le m.$\\
			It follows that $\left| \xi _1 \right|$, $\left| \xi _2 \right|\lesssim1$, thus, $J\lesssim1$. Hence, the integral in this domain is bounded by:
			\begin{equation*}
			\begin{aligned}
			\varPsi _1\lesssim&\int{\left| \frac{f_1\left( \xi _1,\tau _1 \right)}{\left< L_1 \right> ^b}\frac{f_2\left( \xi _2,\tau _2 \right)}{\left< L_2 \right> ^b}\frac{f_3\left( \xi _3,\tau _3 \right)}{\left< L_3 \right> ^b}\frac{f_4\left( \xi _4,\tau _4 \right)}{\left< L_4 \right> ^{1-b}} \right|}\\
			=&\lesssim\lVert F_{1-b}^{(4)} \rVert _{L_{x}^{2}L_{t}^{2}}\prod_{j=1}^3{\lVert F_{b}^{(j)} \rVert _{L_{x}^{6}L_{t}^{6}}}\lesssim\prod_{j=1}^4{\lVert f_j \rVert _{L_{\xi}^{2}L_{\tau}^{2}}},
			\end{aligned}
			\end{equation*}
			\item \textbf{Case \uppercase\expandafter{B}.}\quad$\left| \xi _3 \right|> m$, $\left| \xi _2 \right|\le m.$\\
			It follows that $J\lesssim\left< \xi _3 \right> ^p\lesssim\left| \xi _3 \right|$ for $p<1$. Hence, the integral in this domain is bounded by:
			\begin{equation*}
			\begin{aligned}
			\varPsi _2\lesssim&\int{\left| \frac{f_1\left( \xi _1,\tau _1 \right)}{\left< L_1 \right> ^b}\frac{\chi _{\left| \xi _2 \right|\le m}f_2\left( \xi _2,\tau _2 \right)}{\left< L_2 \right> ^b}\frac{\left| \xi _3 \right|\chi _{\left| \xi _3 \right|\ge m}f_3\left( \xi _3,\tau _3 \right)}{\left< L_3 \right> ^b}\frac{f_4\left( \xi _4,\tau _4 \right)}{\left< L_4 \right> ^{1-b}} \right|}\\
			\lesssim&\lVert F_{b}^{(1)} \rVert _{L_{x}^{4}L_{t}^{4}}\lVert P_mF_{b}^{(2)} \rVert _{L_{x}^{2}L_{t}^{\infty}}\lVert D_xP^mF_{b}^{(3)} \rVert _{L_{x}^{\infty}L_{t}^{2}}\lVert F_{1-b}^{(4)} \rVert _{L_{x}^{4}L_{t}^{4}}\lesssim\prod_{j=1}^4{\lVert f_j \rVert _{L_{\xi}^{2}L_{\tau}^{2}}},
			\end{aligned}
			\end{equation*}
			\item \textbf{Case \uppercase\expandafter{C}.}\quad$\left| \xi _3 \right|,\left| \xi _2 \right|> m$, $\left| \xi _1 \right|\le m.$\\
			It follows that $J_1\lesssim \left< \xi _2 \right> ^p\left< \xi _3 \right> ^p\lesssim \left| \xi _3 \right|$ for $p<\frac{1}{2}$. Then, one has,
			\begin{equation*}
			\begin{aligned}
			\varPsi _3\lesssim&\int{\left| \frac{\chi _{\left| \xi _1 \right|\le m}f_1\left( \xi _1,\tau _1 \right)}{\left< L_1 \right> ^b}\frac{f_2\left( \xi _2,\tau _2 \right)}{\left< L_2 \right> ^b}\frac{\left| \xi _3 \right|\chi _{\left| \xi _3 \right|\ge m}f_3\left( \xi _3,\tau _3 \right)}{\left< L_3 \right> ^b}\frac{f_4\left( \xi _4,\tau _4 \right)}{\left< L_4 \right> ^{1-b}} \right|}\\
			\lesssim&\lVert P_mF_{b}^{(1)} \rVert _{L_{x}^{2}L_{t}^{\infty}}\lVert F_{b}^{(2)} \rVert _{L_{x}^{4}L_{t}^{4}}\lVert D_xP^mF_{b}^{(3)} \rVert _{L_{x}^{\infty}L_{t}^{2}}\lVert F_{1-b}^{(4)} \rVert _{L_{x}^{4}L_{t}^{4}}\lesssim\prod_{j=1}^4{\lVert f_j \rVert _{L_{\xi}^{2}L_{\tau}^{2}}},
			\end{aligned}
			\end{equation*}
			by \textbf{Lemma} \ref{lem1.7}, \ref{lem2.8}, \ref{lem2.9}, \ref{2.10}.
			\item \textbf{Case \uppercase\expandafter{D}.}\quad$\left| \xi _3 \right|,\left| \xi _2 \right|,\left| \xi _1 \right|> m$, $\left| \xi _4 \right|\sim\left| \xi _3 \right|.$\\
			It follows that $J\lesssim\frac{\left< \xi _3 \right> ^p\left< \xi _3 \right> ^p\left< \xi _3 \right> ^p}{\left< \xi _3 \right> ^{p+1}}\lesssim1$ since $\left| \xi _1 \right|\le \left| \xi _2 \right|\le \left| \xi _3 \right|$ and $p<\frac{1}{2}$. Then, by argument similar to \textbf{Case \uppercase\expandafter{A}}, we can obtain that $\varPsi _4\lesssim\prod_{j=1}^4{\lVert f_j \rVert _{L_{\xi}^{2}L_{\tau}^{2}}}.$
			\item \textbf{Case \uppercase\expandafter{E}.}\quad$\left| \xi _3 \right|,\left| \xi _2 \right|,\left| \xi _1 \right|> m$, $\left| \xi _1 \right|\lesssim\left| \xi _4 \right|\ll\left| \xi _2 \right|\sim\left| \xi _3 \right|.$\\
			It follows that $J\lesssim\frac{\left< \xi _2 \right> ^p\left< \xi _3 \right> ^p}{\left< \xi _1 \right>}\lesssim\frac{\left| \xi _3 \right|}{\left< \xi _1 \right>}$. Thus, one has,
			\begin{equation*}
			\begin{aligned}
			\varPsi _5\lesssim&\int{\left| \frac{\chi _{\left| \xi _1 \right|\ge m}f_1\left( \xi _1,\tau _1 \right)}{\left< \xi _1 \right> \left< L_1 \right> ^b}\frac{f_2\left( \xi _2,\tau _2 \right)}{\left< L_2 \right> ^b}\frac{\left| \xi _3 \right|\chi _{\left| \xi _3 \right|\ge m}f_3\left( \xi _3,\tau _3 \right)}{\left< L_3 \right> ^b}\frac{f_4\left( \xi _4,\tau _4 \right)}{\left< L_4 \right> ^{1-b}} \right|}\\
			\lesssim&\lVert J_{x}^{-1}P^mF_{b}^{(1)} \rVert _{L_{x}^{2}L_{t}^{\infty}}\lVert F_{b}^{(2)} \rVert _{L_{x}^{4}L_{t}^{4}}\lVert D_xP^mF_{b}^{(3)} \rVert _{L_{x}^{\infty}L_{t}^{2}}\lVert F_{1-b}^{(4)} \rVert _{L_{x}^{4}L_{t}^{4}}\lesssim\prod_{j=1}^4{\lVert f_j \rVert _{L_{\xi}^{2}L_{\tau}^{2}}},
			\end{aligned}
			\end{equation*}
			by \textbf{Lemma} \ref{lem1.7}, \ref{lem2.9}, \ref{lem2.10}, since $\frac{1}{2}<b<\frac{2}{3}$.
			\item \textbf{Case \uppercase\expandafter{F}.}\quad$\left| \xi _3 \right|,\left| \xi _2 \right|,\left| \xi _2 \right|> m$, $\left| \xi _4 \right|\ll\left| \xi _1 \right|$, $\left| \xi _2 \right|\sim\left| \xi _3 \right|.$\\
			By \textbf{Lemma} \ref{lem5.1}, in this whole domain, we can give a uniform bound of the resonance function $H$:
			\begin{equation*}
			\left| H \right|\gtrsim\left| \xi _1\xi _2\xi _3 \right|.
			\end{equation*}
			Then, by the argument similar to \textbf{Case F} in the proof of \textbf{Theorem} \ref{thm1.3}, we can conclude that the integral in this domain is bounded by: 
			$$\varPsi _6\lesssim\prod_{j=1}^4{\lVert f_j \rVert _{L_{\xi}^{2}L_{\tau}^{2}}}.$$
		\end{itemize}
		The proof is now complete.\end{proof}
	\section{Appendix}\label{section5}
	For the ``hyperplane'' $A_k$, we introduce $L_j:=\tau _j-sgn \left( \tau _j \right) \left( \left| \xi _j \right|^3-\frac{1}{2}\left| \xi _j \right| \right)$, $j=1,2,...,k,$ and the resonance function,
	$$H\left( \xi_{1}, \xi_{2},..., \xi_{k} \right) =\sum_{j=1}^k{L_j},$$
	which is a key to handle the estimate in high frequencies part. To obtain low index multilinear estimates, we need to show uniform bounds for resonance function in the special domain.
	\begin{lem}\label{lem5.1}
		For $ \vec{\xi }\in\Omega$, where $\Omega:=$ $$ \left\{ \vec{\xi }=\left( \xi _1,\xi _2,\xi _3,\xi _4 \right) :\sum_{j=1}^4{\xi _j}=0,\left| \xi _1 \right|\le \left| \xi _2 \right|\le \left| \xi _3 \right|, \left| \xi _2 \right|\gg1, \left| \xi _3 \right|\gg 1, \left| \xi _1 \right|>1, \left| \xi _4 \right|\ll\left| \xi _1 \right|, \left| \xi _2 \right|\sim \left| \xi _3 \right|\right\},$$
		the relation $\left| H \right|\gtrsim \left| \xi _1\xi _2\xi _3 \right|$ holds.
	\end{lem}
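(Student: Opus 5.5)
The plan is to compute $H$ explicitly and reduce the bound to an elementary inequality for three numbers whose sum is almost zero. On $A_4$ we have $\sum_j\tau_j=0$, so
$$H=\sum_{j=1}^4 L_j=-\sum_{j=1}^4 \mathrm{sgn}(\tau_j)\Big(|\xi_j|^3-\tfrac12|\xi_j|\Big).$$
Since $\mathrm{sgn}(\xi)\big(|\xi|^3-\frac12|\xi|\big)=\xi^3-\frac12\xi$, we can write $H=-\sum_j\delta_j\big(\xi_j^3-\frac12\xi_j\big)$ with $\delta_j=\mathrm{sgn}(\tau_j)\mathrm{sgn}(\xi_j)\in\{\pm1\}$. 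Set $a=\xi_1$, $b=\xi_2$, $c=\xi_3$, $d=\xi_4$, so that $a+b+c=-d$ and $|d|\ll|a|\le|b|\le|c|$.

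\textbf{Step 1: the error terms are negligible.} The $\xi_4$-contribution is $|d|^3\ll|abc|$. The linear terms total $O(|c|)$, and $|c|\ll|abc|$ because $|a|>1$ and $|b|\gg1$. So it suffices to bound $\big|\delta_1a^3+\delta_2b^3+\delta_3c^3\big|\gtrsim|abc|$. Replacing $\delta$ by $-\delta$ does not change the absolute value, so only two sign patterns remain: all signs equal, or exactly one sign flipped.

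\textbf{Step 2: all signs equal.} Use the identity
$$a^3+b^3+c^3-3abc=(a+b+c)(a^2+b^2+c^2-ab-bc-ca)=O(|d|\,|c|^2).$$
Here $|b|\sim|c|$ and $|d|\ll|a|$, so $|d||c|^2\ll|a||b||c|$. Hence the sum equals $3abc$ up to a negligible error, which gives $|H|\gtrsim|abc|$.

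\textbf{Step 3: exactly one sign flipped.} Flipping $x\in\{a,b,c\}$ gives $3abc-2x^3$ up to the same negligible errors. There are three cases.

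\emph{Flip $c$.} From $c\approx-(a+b)$ and $|ab|\le\frac14(a+b)^2$ we get $|3abc|\le(\frac34+o(1))|c|^3$. Therefore $|3abc-2c^3|\gtrsim|c|^3\ge|abc|$.

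\emph{Flip $a$.} Here $|bc|\ge a^2$, so $|3abc-2a^3|\ge|a|(3|bc|-2a^2)\ge|abc|$.

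\emph{Flip $b$.} This is the delicate case, because $|ac|$ may be much smaller than $b^2$. I would split on the relative sign of $a$ and $b$.
\begin{itemize}
\item If $a$ and $b$ have opposite signs, then $|c|=|a+b+d|\le|b|-|a|+|d|<|b|$, since $|d|\ll|a|$. This contradicts $|b|\le|c|$, so the case does not occur.
\item If $a$ and $b$ have the same sign, then $c$ has the opposite sign. Hence $\mathrm{sgn}(abc)=\mathrm{sgn}(c)=-\mathrm{sgn}(b)=\mathrm{sgn}(-2b^3)$. The two terms reinforce each other, and $|3abc-2b^3|\ge3|abc|$.
\end{itemize}

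In every case $|H|\gtrsim|\xi_1\xi_2\xi_3|$. I expect the main obstacle to be the bookkeeping: the $O(|d||c|^2)$ and $O(|c|)$ errors must be genuinely dominated. This is exactly where the hypotheses $|\xi_4|\ll|\xi_1|$, $|\xi_2|\sim|\xi_3|$, $|\xi_1|>1$ and $|\xi_2|\gg1$ are used. The implicit smallness constants should be fixed, for example via $m\gg1$, so that all the "$\ll$" comparisons close uniformly.
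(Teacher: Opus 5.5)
Your argument is correct, and it is organized differently from the paper's.

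\textbf{The paper's route.} It enumerates the sign patterns of $\vec\tau$ (its cases A--G, modulo $\vec\tau\to-\vec\tau$) and of $\vec\xi$ (cases (a)--(g)). It discards (a), (d), (e), (f), (g) as incompatible with $|\xi_4|\ll|\xi_1|\le|\xi_2|\le|\xi_3|$. For each surviving pair it writes $H$ exactly, using the expansions of $|\xi_3|^3$ that follow from $|\xi_3|=|\xi_1|+|\xi_2|\pm|\xi_4|$. Only four of the fourteen combinations are actually written out.

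\textbf{Your route.} You absorb $\mathrm{sgn}(\xi_j)$ into $\delta_j$, so $H$ becomes a signed sum of the odd polynomial $\xi^3-\frac12\xi$. You treat the $\xi_4$ and linear terms as perturbations from the start and use the global sign symmetry to cut down the patterns. The factorization of $a^3+b^3+c^3-3abc$ then reduces every pattern to $3abc$ or $3abc-2x^3$ up to errors.

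\textbf{Comparison.} Your version covers all cases in a few lines and shows where each hypothesis enters. The condition $|\xi_4|\ll|\xi_1|$ is exactly what controls the $O(|\xi_4||\xi_3|^2)$ error against $|\xi_1\xi_2\xi_3|$. Only the flip-$b$ pattern genuinely needs the sign structure of $\vec\xi$. The paper's exact expressions make the sign of each term visible with only crude comparisons such as $|\xi_4|<|\xi_1|$. The price is a long case list, most of which is left as ``similar.''

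\textbf{One ordering fix.} In the flip-$c$ case, the inequality $|ab|\le\frac14(a+b)^2$ requires $ab\ge 0$. You only prove this later, in the flip-$b$ discussion. State at the outset that $\xi_1,\xi_2$ share a sign and $\xi_3$ has the opposite one; this is precisely the paper's reduction to cases (b) and (c).
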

	\begin{proof}
		In order to estimate the resonance function in this domain, we will examine the following seven potential cases regarding the signs of $\tau _j$:
		\begin{equation*}
		\begin{aligned}
		&A:=\left\{ \tau _1\ge 0,\tau _2\ge 0,\tau _3\ge 0,\tau _4\le 0 \right\}, \quad\quad B:=\left\{ \tau _1\ge 0,\tau _2\ge 0,\tau _3\le 0,\tau _4\ge 0 \right\}, \\
		&C:=\left\{ \tau _1\ge 0,\tau _2\ge 0,\tau _3\le 0,\tau _4\le 0 \right\}, \quad\quad
		D:=\left\{ \tau _1\ge 0,\tau _2\le 0,\tau _3\ge 0,\tau _4\ge 0 \right\}, \\
		&E:=\left\{ \tau _1\ge 0,\tau _2\le 0,\tau _3\ge 0,\tau _4\le 0 \right\}, \quad\quad 
		F:=\left\{ \tau _1\ge 0,\tau _2\le 0,\tau _3\le 0,\tau _4\ge 0 \right\}, \\ 
		&G:=\left\{ \tau _1\ge 0,\tau _2\le 0,\tau _3\le 0,\tau _4\le 0 \right\}. 
		\end{aligned}
		\end{equation*}
		Notice that $\vec{\tau }>0$ or $\vec{\tau }<0$ is impossible to happen due to $\sum_{j=1}^4{\tau _j}=0$, meanwhile, the remaining cases are equivalent to cases $A$, $B$, $C$, $D$, $E$, $F$ by considering the reflection transformation $\vec{\tau }=\left( \tau _1,\tau _2,\tau _3,\tau _4 \right) \rightarrow -\vec{\tau }$ and the fact that $\left| H\left( \vec{\xi } \right) \right|$ is invariant under reflection. Hence, it suffices to consider the above seven cases regarding the signs of $\tau _j$. In each cases, we also split the hyperplane $\left\{ \vec{\xi }\in \mathbb{R}^4:\sum_{j=1}^4{\xi _j}=0 \right\} $ as follows:
		\begin{equation*}
		\begin{aligned}
		&(a):=\left\{ \xi _1\ge 0,\xi _2\ge 0,\xi _3\ge 0,\xi _4\le 0 \right\},\quad (b):=\left\{ \xi _1\ge 0,\xi _2\ge 0,\xi _3\le 0,\xi _4\ge 0 \right\},\\
		&(c):=\left\{ \xi _1\ge 0,\xi _2\ge 0,\xi _3\le 0,\xi _4\le 0 \right\},\quad (d):=\left\{ \xi _1\ge 0,\xi _2\le 0,\xi _3\ge 0,\xi _4\ge 0 \right\},\\
		&(e):=\left\{ \xi _1\ge 0,\xi _2\le 0,\xi _3\ge 0,\xi _4\le 0 \right\},\quad (f):=\left\{ \xi _1\ge 0,\xi _2\le 0,\xi _3\le 0,\xi _4\ge 0 \right\},\\
		&(g):=\left\{ \xi _1\ge 0,\xi _2\le 0,\xi _3\le 0,\xi _4\le 0 \right\}.
		\end{aligned}	 
		\end{equation*}
		First, case $(a)$ will lead to $\left| \xi _4 \right|=\left| \xi _1 \right|+\left| \xi _2 \right|+\left| \xi _3 \right|$, which is contradicting with $\left| \xi _2 \right|\gg\left| \xi _4 \right|$. Next, case $(d)$ implies that $\left| \xi _2 \right|=\left| \xi _1 \right|+\left| \xi _3 \right|+\left| \xi _4 \right|$, this is contradicting with $\left| \xi _2 \right|\le\left| \xi _3 \right|$. Case $(e)$ will lead to $\left| \xi _1 \right|+\left| \xi _3 \right|=\left| \xi _2 \right|+\left| \xi _4 \right|$ which implies that $\left| \xi _4 \right|\ge \left| \xi _1 \right|$, this is also contradicting with $\left| \xi _4 \right|\ll\left| \xi _1 \right|$. Meanwhile, case $(f)$ implies that $\left| \xi _2 \right|+\left| \xi _3 \right|=\left| \xi _1 \right|+\left| \xi _4 \right|$, which is contradicting with $\left| \xi _2 \right|\gg\left| \xi _4 \right|$ and $\left| \xi _3 \right|\ge\left| \xi _1 \right|$. In addition, if case $(g)$ holds, then this will lead to $\left| \xi _1 \right|=\left| \xi _2 \right|+\left| \xi _3 \right|+\left| \xi _4 \right|$ which is also contradictory. Furthermore, for the same reason as before, the remaining cases are equivalent to cases $(a)$, $(b)$, $(c)$, $(d)$, $(e)$, $(f)$, $(g)$. Therefore, we only need to examine subcases $(b)$, $(c)$ in cases $A$, $B$, $C$, $D$, $E$, $F$, $G$. When $\vec{\xi }$ lies in $(b)$, then we can observe that $$\left| \xi _3 \right|=\left| \xi _1 \right|+\left| \xi _2 \right|+\left| \xi _4 \right|,\quad \left| \xi _3 \right|^3=\left| \xi _1 \right|^3+\left| \xi _2 \right|^3+\left| \xi _4 \right|^3+3\left( \left| \xi _1 \right|+\left| \xi _2 \right| \right) \left( \left| \xi _1 \right|+\left| \xi _4 \right| \right) \left( \left| \xi _2 \right|+\left| \xi _4 \right| \right).$$ When $\vec{\xi }$ lies in $(c)$, then we can observe that $$\left| \xi _3 \right|+\left| \xi _4 \right|=\left| \xi _1 \right|+\left| \xi _2 \right|,\quad\left| \xi _3 \right|^3=\left| \xi _1 \right|^3+\left| \xi _2 \right|^3-\left| \xi _4 \right|^3+3\left( \left| \xi _1 \right|+\left| \xi _2 \right| \right) \left( \left| \xi _1 \right|-\left| \xi _4 \right| \right) \left( \left| \xi _2 \right|-\left| \xi _4 \right| \right).$$
		The above relations are the key to estimate the resonance function. Next, we will deduce the inequalities in each cases. Since $\left| \xi _2 \right|, \left| \xi _3 \right|\gg 1, \left| \xi _1 \right|>1$, we can only focus on the monomials of highest order in $H\left( \vec{\xi } \right)$.
		\begin{itemize}
			\item \textbf{Case A.(b)} : $\tau _1\ge 0,\tau _2\ge 0,\tau _3\ge 0,\tau _4\le 0,\xi _1\ge 0,\xi _2\ge 0,\xi _3\le 0,\xi _4\ge 0$.
			\begin{equation*}
			\begin{aligned}
			\left| H \right|=&\left| -\xi _{1}^{3}+\frac{1}{2}\xi _1-\xi _{2}^{3}+\frac{1}{2}\xi _2+\xi _{3}^{3}-\frac{1}{2}\xi _3+\xi _{4}^{3}-\frac{1}{2}\xi _4 \right|\\
			=&\left| -2\left( \xi _{1}^{3}+\xi _{2}^{3} \right) -3\left( \xi _1+\xi _2 \right) \left( \xi _1+\xi _3 \right) \left( \xi _2+\xi _3 \right) +\xi _1+\xi _2 \right|\\
			=&2\left( \left| \xi _1 \right|^3+\left| \xi _2 \right|^3 \right) +3\left( \left| \xi _1 \right|+\left| \xi _2 \right| \right) \left( \left| \xi _3 \right|-\left| \xi _1 \right| \right) \left( \left| \xi _3 \right|-\left| \xi _2 \right| \right) -\left( \left| \xi _1 \right|+\left| \xi _2 \right| \right)\gtrsim\left| \xi _2 \right|^3\gtrsim\left| \xi _1\xi _2\xi _3 \right|.
			\end{aligned}
			\end{equation*}
			\item \textbf{Case A.(c)} : $\tau _1\ge 0,\tau _2\ge 0,\tau _3\ge 0,\tau _4\le 0,\xi _1\ge 0,\xi _2\ge 0,\xi _3\le 0,\xi _4\le 0 $.
			\begin{equation*}
			\begin{aligned}
			\left| H \right|=&\left| -\xi _{1}^{3}+\frac{1}{2}\xi _1-\xi _{2}^{3}+\frac{1}{2}\xi _2+\xi _{3}^{3}-\frac{1}{2}\xi _3-\xi _{4}^{3}+\frac{1}{2}\xi _4 \right|\\
			=&\left| 2\xi _{3}^{3}+3\left( \xi _1+\xi _2 \right) \left( \xi _1+\xi _3 \right) \left( \xi _2+\xi _3 \right) -\xi _3 \right|,\\
			=& 2\left| \xi _3 \right|^3-3\left( \left| \xi _1 \right|+\left| \xi _2 \right| \right) \left( \left| \xi _2 \right|-\left| \xi _4 \right| \right) \left( \left| \xi _1 \right|-\left| \xi _4 \right| \right) -\left| \xi _3 \right| \\
			=& 2\left( \left| \xi _1 \right|^3+\left| \xi _2 \right|^3-\left| \xi _4 \right|^3 \right) +3\left( \left| \xi _1 \right|+\left| \xi _2 \right| \right) \left( \left| \xi _2 \right|-\left| \xi _4 \right| \right) \left( \left| \xi _1 \right|-\left| \xi _4 \right| \right) -\left| \xi _3 \right| \\
			\gtrsim&\left| \xi _1 \right|\left| \xi _2 \right|^2\sim\left| \xi _1\xi _2\xi _3 \right|.\\
			\end{aligned}
			\end{equation*}
			\item \textbf{Case B.(b)} :  $\tau _1\ge 0,\tau _2\ge 0,\tau _3\le 0,\tau _4\ge 0,\xi _1\ge 0,\xi _2\ge 0,\xi _3\le 0,\xi _4\le 0 $.
			\begin{equation*}
			\begin{aligned}
			\left| H \right|=&-\xi _{1}^{3}+\frac{1}{2}\xi _1-\xi _{2}^{3}+\frac{1}{2}\xi _2-\xi _{3}^{3}+\frac{1}{2}\xi _3-\xi _{4}^{3}+\frac{1}{2}\xi _4\\
			=&3\left( \xi _1+\xi _2 \right) \left( \xi _1+\xi _3 \right) \left( \xi _2+\xi _3 \right)\\
			=&3\left( \left| \xi _1 \right|+\left| \xi _2 \right| \right) \left( \left| \xi _2 \right|+\left| \xi _4 \right| \right) \left( \left| \xi _1 \right|+\left| \xi _4 \right| \right)\gtrsim\left| \xi _1 \right|\left| \xi _2 \right|^2\sim\left| \xi _1\xi _2\xi _3 \right|.
			\end{aligned}
			\end{equation*}
			\item \textbf{Case B.(c)}:  $\tau _1\ge 0,\tau _2\ge 0,\tau _3\le 0,\tau _4\ge 0,\xi _1\ge 0,\xi _2\ge 0,\xi _3\le 0,\xi _4\le 0.$\\ 
			\begin{equation*}
			\begin{aligned}
			\left| H \right|=&\left| -\xi _{1}^{3}+\frac{1}{2}\xi _1-\xi _{2}^{3}+\frac{1}{2}\xi _2-\xi _{3}^{3}+\frac{1}{2}\xi _3+\xi _{4}^{3}-\frac{1}{2}\xi _4 \right|\\
			=&\left| -2\left( \xi _{1}^{3}+\xi _{2}^{3}+\xi _{3}^{2} \right) -3\left( \xi _1+\xi _2 \right) \left( \xi _1+\xi _3 \right) \left( \xi _2+\xi _3 \right) -\xi _4 \right|\\
			=&2\left( \left| \xi _3 \right|^3-\left| \xi _1 \right|^3-\left| \xi _2 \right|^3 \right) -3\left( \left| \xi _1 \right|+\left| \xi _2 \right| \right) \left( \left| \xi _3 \right|-\left| \xi _1 \right| \right) \left( \left| \xi _3 \right|-\left| \xi _2 \right| \right) +\left| \xi _4 \right|\\
			=&-2\left| \xi _4 \right|^3+3\left( \left| \xi _1 \right|+\left| \xi _2 \right| \right) \left( \left| \xi _2 \right|-\left| \xi _4 \right| \right) \left( \left| \xi _1 \right|-\left| \xi _4 \right| \right) +\left| \xi _4 \right| \gtrsim\left| \xi _1 \right|\left| \xi _2 \right|^2\sim\left| \xi _1\xi _2\xi _3 \right|.
			\end{aligned}
			\end{equation*}
		\end{itemize}
		\textbf{Case C} to \textbf{Case G} can be checked similarly, we omit for simplicity. 
	\end{proof}
	
	\section*{Acknowledgements}
	S.  Li is supported by the National Natural Science Foundation of China (No.  12001084 and No.  12071061).

\end{document}